\documentclass[12pt,reqno, oneside]{amsart}
\makeatletter
\@namedef{subjclassname@2020}{%
  \textup{2020} Mathematics Subject Classification}
\makeatother
\usepackage{amsmath,amssymb,latexsym,esint,cite,mathrsfs,slashed, dsfont}
\usepackage{verbatim,cite,relsize,color,soul}
\usepackage[latin1]{inputenc}
\usepackage{microtype}
\usepackage{color,enumitem,graphicx, xcolor}
\usepackage{mathtools}
\mathtoolsset{showonlyrefs}
\usepackage{bm} 
\usepackage[colorlinks=true,urlcolor=blue, citecolor=red,linkcolor=blue,
linktocpage,pdfpagelabels, bookmarksnumbered,bookmarksopen]{hyperref}
\usepackage[hyperpageref]{backref}
\usepackage[english]{babel}
\usepackage{geometry}
\usepackage{soul}

\numberwithin{equation}{section}
\newtheorem{theorem}{Theorem}[section]
\newtheorem{lemma}[theorem]{Lemma}

\newtheorem{proposition}[theorem]{Proposition}

\theoremstyle{definition}
\newtheorem{definition}[theorem]{Definition}
\newtheorem{remark}[theorem]{Remark}
\newtheorem*{ackno}{Acknowledgements}

\newcommand{\R}{\mathbb R}

\def\({\left(}
\def\){\right)}
\def\<{\left\langle}
\def\>{\right\rangle}

\DeclareMathOperator*{\GN}{GN}
\DeclareMathOperator*{\Sob}{Sob}
\DeclareMathOperator*{\rad}{rad}

\title[Energy critical NLS system with three waves interaction]
{Three wave interaction solitons for an energy critical Schr\"odinger system}

\author[L. Forcella, X. Luo, X. Yang
]{Luigi Forcella, Xiao Luo, and Xiaolong Yang}

\address[L. Forcella]{Department of Mathematics, University of Pisa, Largo Bruno Pontecorvo, 5, 56127, Pisa, Italy}
\email{luigi.forcella@unipi.it}

\address[X. Luo]{School of Mathematics, Hefei University of Technology, Hefei, 230009, P. R. China}
\email{luoxiao@hfut.edu.cn}

\address[X. Yang]{Xiaolong Yang, School of Mathematics and Statistics, Henan University, Kaifeng, 475004, P. R. China} 
\email{xlyang@henu.edu.cn}

\subjclass[2020]{35Q55, 35J50, 35B35}

\keywords{Energy critical NLS; three waves interaction; standing waves; global dynamics}

\begin{document}
	
\begin{abstract}
 We investigate standing waves for the energy critical Schr\"odinger system with three waves interaction arising as a model for the Raman amplification in a plasma. Several results are proved: simultaneous existence of stable and unstable standing waves, existence of global solutions, and absence of small data scattering. Our main results show some specific features arising from the three waves interaction differently from the classical energy critical Schr\"odinger equation, and they support some experimental observations on  Raman amplification.
\end{abstract}

\maketitle
	
\section{Introduction}

In this paper, we consider the following nonlinear Schr\"odinger system with three waves interaction

\begin{equation}\label{eqA0.1}
\begin{cases}
i \partial_{t} \psi_{1}=-\Delta \psi_{1}-\left|\psi_{1}\right|^{4} \psi_{1}-\alpha \psi_{3} \overline{\psi}_{2}, \\
i \partial_{t} \psi_{2}=-\Delta \psi_{2}-\left|\psi_{2}\right|^{4} \psi_{2}-\alpha \psi_{3} \overline{\psi}_{1}, \\
i \partial_{t} \psi_{3}=-\Delta \psi_{3}-\left|\psi_{3}\right|^{4} \psi_{3}-\alpha \psi_{1} \psi_{2}.
\end{cases}
\end{equation}
Here $\psi_j=\psi_j(t,x)$ with $j\in\{1,2,3\}$, are complex-valued functions $\psi_j:\R\times \R^3\mapsto \mathbb C$, with $\overline \psi_j$ denoting the complex conjugate, $\alpha$ is a positive real parameter. From now on we use the compact notation
\[
{\bm \psi}(t,x)=\left(\psi_1(t,x),\psi_2(t,x),\psi_3(t,x)\right).
\]
\vskip1mm

System \eqref{eqA0.1} models the interaction in a plasma between an incident laser field, a backscattered Raman field, and an electronic wave and is related to the Raman amplification in a plasma \cite{RDR}. Roughly speaking, the Raman amplification is an instability phenomenon taking place when an incident laser field propagates into a plasma (see \cite{HA} and the introduction in \cite{PA}). As explained in \cite{PA},  the laser  field, entering a plasma,  is backscattered by a Raman type process and the interaction of the two waves generates an electronic plasma wave. Then the three waves together produce a change in the ions' density which in turn affects the waves. The exact derivation of \eqref{eqA0.1} from the above physical picture can be found in \cite{CCO,FLY}.

\vskip1mm

The system can also be written in Hamiltonian form
\begin{equation*}
\partial_t {\bm \psi}(t,x)=-i E'({\bm \psi}(t,x)),
\end{equation*}
where the following quantities are conserved along the flow: the energy, defined by
\begin{equation}\label{eq:energy}
E({\bm \psi}(t))=\sum^{3}_{j=1}\left(\frac{1}{2}\|\nabla \psi_{j}(t)\|^2_{L^2(\R^3)}-\frac{1}{6}\|\psi_j(t)\|^6_{L^6(\R^3)}  \right)-\alpha \mathrm{Re}\int_{\R^3} \left(\psi_1\psi_2\overline{\psi}_3\right)(t) dx,
\end{equation}
and the mixed masses
\begin{equation}\label{eq:cons-masses}
\begin{aligned}
&Q_1({\bm \psi}(t))=\|\psi_1(t)\|^2_{L^2(\R^3)}+\|\psi_3(t)\|^2_{L^2(\R^3)}, \\  &
Q_2({\bm \psi}(t))=\|\psi_2(t)\|^2_{L^2(\R^3)}+\|\psi_3(t)\|^2_{L^2(\R^3)}.
\end{aligned}
\end{equation}

\noindent As usual, conservation means that the previous quantities are not dependent on time, or alternatively $E({\bm \psi}(t))=E({\bm \psi}(0))$,  $Q_1({\bm \psi}(t))=Q_1({\bm \psi}(0))$, and $Q_2({\bm \psi}(t))=Q_2({\bm \psi}(0))$ for any time $t$ in the maximal interval of existence $[0,T_{\max})$ (without loss of generality, we only consider positive times). 

\vskip1mm

A standing wave for \eqref{eqA0.1} is a solution of the form  
\[
{\bm \psi}(t)=(e^{i\lambda_1t}u_1(x), e^{i\lambda_2t}u_2(x), e^{i\lambda_3t}u_3(x)),
\]
where $\lambda_1,\lambda_2,\lambda_3$ are real numbers and ${\bf u}=(u_1,u_2,u_3)\in H^1(\R^3,\mathbb{C}^3)$ satisfies the system of elliptic equations
\begin{equation}\label{eqA0.2}
\begin{cases}
-\Delta u_{1}+ \lambda_1u_{1}=\left|u_{1}\right|^{4} u_{1}+\alpha u_{3} \overline{u}_{2}, \\
-\Delta u_{2}+ \lambda_2u_{2}=\left|u_{2}\right|^{4} u_{2}+\alpha u_{3} \overline{u}_{1}, \\
-\Delta u_{3}+ \lambda_3u_{3}=\left|u_{3}\right|^{4} u_{3}+\alpha u_{1} u_{2},
\end{cases}
\end{equation}
where $\lambda_3=\lambda_1+\lambda_2$. If the energy critical power-type nonlinearities are replaced by sub-critical ones, namely if one considers 
\begin{equation}\label{eqA0.2sub-crit}
\begin{cases}
-\Delta u_{1}+ \lambda_1u_{1}=\left|u_{1}\right|^{p-2} u_{1}+\alpha u_{3} \overline{u}_{2}, \\
-\Delta u_{2}+ \lambda_2u_{2}=\left|u_{2}\right|^{p-2} u_{2}+\alpha u_{3} \overline{u}_{1}, \\
-\Delta u_{3}+ \lambda_3u_{3}=\left|u_{3}\right|^{p-2} u_{3}+\alpha u_{1} u_{2},
\end{cases}
\end{equation}
with $u_j:\mathbb R^N\to \mathbb C$, $j\in\{1,2,3\}$, and $2<p<2^*$ ($2^*=+\infty$ if $N\leq2$, $2^*=\frac{2N}{N-2}$ if $N\ge3$) for fixed frequency $\lambda_j \in \R$, under certain conditions, the existence, uniqueness and multiplicity of solutions of \eqref{eqA0.2} have been studied in \cite{PA,WJ,WS}. It is worth mentioning that in the present paper we consider the 3D physical case.  
From a physical point of view, it is shown in \cite{IE} that three wave interaction standing waves can be used for optical switching in a polarization-gate geometry. 

\vskip2mm

Since the mixed masses are preserved quantities along the evolution, one can get solutions to \eqref{eqA0.2} by looking for critical points of the energy functional $E({\bf u})$ constrained on
\begin{equation}
S(a_1,a_2):=\left\{{\bf u}\in H^1(\R^3,\mathbb{C}^3)\quad \hbox{ s.t. }\quad Q_1({\bf u})=a^2_1, \quad  Q_2({\bf u})=a^2_2\right\},
\end{equation}
where $a_1,a_2>0$ are prescribed positive parameters. Then, $\lambda_j \in \R$  in \eqref{eqA0.2}, $j\in\{1,2,3\}$, appear as  Lagrange multipliers with respect to the mass constraints. Existence of global minimizers for $E({\bf u})$ on $S(a_1,a_2)$ are proved in \cite{AH,KO}. In addition, for the dynamical properties of related standing waves for \eqref{eqA0.1}, see \cite{LO,CCO,CCO1,MM,AH,CC} and the references therein.
\vskip2mm

 We continue the study initiated in \cite{FLY} on the system in subcritical regime, i.e., \eqref{eqA0.2sub-crit}, and in this paper we focus on the more challenging problem of standing waves for the energy critical system \eqref{eqA0.2}.
 Note that the coupling terms are of mass-subcritical type and sign-indefinite, then we are dealing with a special mass-mixed case, which is more complicated.
\vskip1mm

We shall focus on physical states with different energy levels. We start by giving a few definitions.
\begin{definition}
We say that ${\bf u}_0$ is a ground state of \eqref{eqA0.2} on $S(a_1,a_2)$ provided  $dE \vert_{S(a_1,a_2)}({\bf u}_0)=0$ and
\begin{equation*}
E({\bf u}_0)=\inf \left\{E({\bf u}) \hbox{ s.t. } dE\vert_{S(a_1,a_2)}(u)=0\ \hbox{and}\ {\bf u}\in S(a_1,a_2)\right\}.
\end{equation*}
We say that ${\bf v}_0$ is an excited state of \eqref{eqA0.2} on $S(a_1,a_2)$ if $dE \vert_{S(a_1,a_2)}({\bf v}_0)=0$ and
\begin{equation*}
 E({\bf v}_0)>\inf \left\{E({\bf u}) \hbox{ s.t. } dE\vert_{S(a_1,a_2)}(u)=0\ \text{and}\ {\bf u}\in S(a_1,a_2)\right\}.
\end{equation*}
The set of ground states will be denoted by $\mathcal G$.
\end{definition}

To study mass-synchronised asymptotic of ground states and excited states, we introduce the following minimization problem
\begin{equation}\label{def:m0}
m_0(a_1,a_2):=\inf_{{\bf u}\in S(a_1,a_2)}E_0({\bf u}),
\end{equation}
where
\begin{equation}\label{energy-limit}
E_0({\bf u}):=\frac{1}{2}\sum^{3}_{j=1}\|\nabla u_j\|^2_{L^2(\R^3)}-\alpha\mathrm{Re}\int_{\R^3} u_1u_2\overline{u}_3 dx.
\end{equation}
It is well known that
\begin{equation*}
C_{\Sob}:=\inf_{u\in \dot H^{1}(\R^3)\setminus\{0\}}\frac{\|\nabla u\|^2_{L^2(\mathbb R^3)}}{\|u\|^2_{L^6(\mathbb R^3)}}
\end{equation*}
is achieved by the family of functions 
\begin{equation}\label{Sobolev}
U_\varepsilon(x)=\frac{3^{\frac{1}{4}}\varepsilon^{\frac{1}{2}}}{(\varepsilon^2+|x|^2)^{\frac{1}{2}}},\quad \varepsilon>0.
\end{equation}
We introduce the following constant $D$, which will play a crucial role in the rest of the paper:
\begin{equation}\label{def:D}
D:=\frac{2^{\frac{2}{3}}C_{\Sob}^{\frac{1}{4}}\|W\|^{\frac{2}{3}}_{L^2(\mathbb R^3)}}{3^{\frac{11}{12}}\alpha^{\frac{2}{3}}},
\end{equation}
where $W$ is the unique positive radial solution of \begin{equation}\label{eq:w}
-\Delta W+W-W^2=0 \quad \hbox{ in }\quad  \R^3.
\end{equation}

\vskip2mm

We can now state our main results.
\begin{theorem}\label{th1.1}
Let $\alpha, a_1, a_2>0$. Suppose that $\max\{a_1,a_2\}<D$, with $D$ as in \eqref{def:D}. Then we have:\smallskip

\noindent \textup{(i)} there exist a ground state ${\bf u}_{a_1,a_2}$ and an excite state ${\bf v}_{a_1,a_2}$ of \eqref{eqA0.2} on $S(a_1,a_2)$;\smallskip

\noindent \textup{(ii)} as $(a_1,a_2)\to (0^+,0^+)$, $\|\nabla {\bf u}_{a_1,a_2}\|_{L^2(\mathbb R^3)}\to 0$ and $E({\bf u}_{a_1,a_2})\to 0$; furthermore, 
$\|\nabla {\bf v}_{a_1,a_2}\|_{L^2(\mathbb R^3)} \to C_{\Sob}^{\frac{3}{2}}$ and $E({\bf v}_{a_1,a_2})\to \frac{1}{3}C_{\Sob}^{\frac{3}{2}}$;\smallskip

\noindent \textup{(iii)} fix $\alpha > 0$, assume $\epsilon:=a_1=a_2 \to 0^+$, then we have
\begin{equation*}
\|\nabla {\bf u}_{\epsilon,\epsilon}\|_{L^2(\mathbb R^3)}^{2}\sim \epsilon^6,\qquad E({\bf u}_{\epsilon,\epsilon})\sim \epsilon^6,
\end{equation*}
and furthermore, for every sequence  $\epsilon_n\to 0^+$, the rescaled family
\[w_n(x):=\epsilon_n^{-4}{\bf u}_{\epsilon_n,\epsilon_n}(\epsilon_n^{-2}x)\]
has a subsequence converging in $H^1(\R^3,\mathbb{C}^3)$ to a minimizer for $m_0(1,1)$ (see the definition \eqref{def:m0}).
\end{theorem}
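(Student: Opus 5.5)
The plan follows the Soave/Jeanjean--Le scheme for mass-mixed problems, adapted to the two-constraint manifold $S(a_1,a_2)$.

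\emph{Geometry of the energy on $S(a_1,a_2)$.} Use the $L^2$-preserving dilation $s\star{\bf u}(x)=e^{3s/2}{\bf u}(e^s x)$, which keeps $Q_1,Q_2$ fixed. Set $t=\|\nabla{\bf u}\|_{L^2}$. Sobolev gives $\sum_j\|u_j\|_6^6\le C_{\Sob}^{-3}t^6$. For the cubic coupling, use the sharp Gagliardo--Nirenberg inequality $\|u\|_3^3\le C_{\GN}\|u\|_2^{3/2}\|\nabla u\|_2^{3/2}$, whose optimal constant is expressed through $\|W\|_{L^2}$ by Weinstein's argument for \eqref{eq:w}. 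Together with H\"older/Young and the masses $\|u_1\|_2^2+\|u_3\|_2^2=a_1^2$, $\|u_2\|_2^2+\|u_3\|_2^2=a_2^2$, this yields
\[
E({\bf u})\ge h(t):=\tfrac12 t^2-\tfrac{1}{6}C_{\Sob}^{-3}t^6-C\alpha\,\max\{a_1,a_2\}^{3/2}\,t^{3/2}.
\]
A direct computation should show that $h$ has a strictly positive local maximum exactly when $\max\{a_1,a_2\}<D$. This is where the constant \eqref{def:D} comes from, and I would tune the GN step until the threshold matches. Then $h<0$ on $(0,R_0)$, $h>0$ on $(R_0,R_1)$, and $h<0$ again beyond. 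The Pohozaev manifold $\mathcal P=\{P({\bf u})=0\}$, with
\[
P({\bf u})=\|\nabla{\bf u}\|^2-\|{\bf u}\|_6^6-\tfrac32\alpha\mathrm{Re}\int u_1u_2\bar u_3,
\]
then splits as $\mathcal P_+\cup\mathcal P_-$, with $\mathcal P_0=\emptyset$. Each fiber map $s\mapsto E(s\star{\bf u})$, for ${\bf u}$ with positive coupling, has exactly one local minimum and one local maximum.

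\emph{Existence in part (i).} The ground state is obtained by minimizing $E$ on $V=\{{\bf u}\in S(a_1,a_2):\|\nabla{\bf u}\|<R_0\}$. Taking $u_j\ge0$ with nonzero cubic term and dilating with $s\to-\infty$ gives $m(a_1,a_2)<0$. We may assume radial symmetry and nonnegativity by rearrangement, since the coupling increases under $|\cdot|$ and Schwarz symmetrization (Riesz). Compactness of a minimizing sequence comes from strict subadditivity of $m$ in $(a_1,a_2)$, obtained by scaling, plus the fact that $\|\nabla{\bf u}\|$ stays away from $R_0$. The Lagrange multipliers are positive: this follows from the Pohozaev identity, since negative energy forces $\lambda_1,\lambda_2>0$ and hence $\lambda_3>0$. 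With positive multipliers, weak limits in $H^1_{\rad}$ are strong. It also follows that every critical point lies in $\mathcal P$ and that $\inf_{\mathcal P_+}E=m$, so the minimizer is a ground state. For the excited state, set up a mountain-pass in the fiber direction on $S_{\rad}$ at level $M=\inf_{\mathcal P_-}E>0$, and produce a Palais--Smale sequence with $P\to0$ via the augmented functional $(s,{\bf u})\mapsto E(s\star{\bf u})$.

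\emph{Main obstacle.} The hard step is compactness at level $M$, which requires the strict bound
\[
M<m(a_1,a_2)+\tfrac13 C_{\Sob}^{3/2}.
\]
I would test with ${\bf u}_{a}+\tau\,U_\varepsilon$-type bubbles, cut off and mass-corrected, placed in one or several components. The cubic interaction with the ground state supplies a negative correction of order $\varepsilon^{1/2}$, which dominates the $O(\varepsilon)$ Sobolev error terms. Once the bound holds, the standard Brezis--Lieb splitting excludes bubbling and vanishing, positivity of the $\lambda_j$ excludes mass loss, and the limit ${\bf v}$ is a critical point with $E({\bf v})=M>0>m$, hence excited.

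\emph{Parts (ii) and (iii).} For (ii): since ${\bf u}\in V$ we have $\|\nabla{\bf u}_{a_1,a_2}\|<R_0(a)\to0$, so $m\to0$. For ${\bf v}$, the coupling term is $O(a^{3/2}t^{3/2})\to0$, and $P({\bf v})=0$ gives $t^2\approx\|{\bf v}\|_6^6\le C_{\Sob}^{-3}t^6$, so $\liminf t^2\ge C_{\Sob}^{3/2}$. The upper bound $M\le \tfrac13C_{\Sob}^{3/2}+o(1)$ comes from the test functions above, and these combine to the stated limits for $\|\nabla{\bf v}\|$ and $E({\bf v})$. For (iii): the scaling ${\bf u}(x)=\epsilon^4 w(\epsilon^2x)$ preserves the normalization $Q_1=Q_2=\epsilon^2\cdot1$. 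Under it, $E_0$ scales like $\epsilon^6E_0(w)$ while the sextic term scales like $\epsilon^{18}$. Hence $m(\epsilon,\epsilon)=\epsilon^6(m_0(1,1)+o(1))$ with $m_0(1,1)<0$, and $\|\nabla {\bf u}_{\epsilon,\epsilon}\|^2\sim\epsilon^6$. Finally, $w_n$ is a minimizing sequence for $m_0(1,1)$, and it is radial, so it converges strongly by the same subadditivity argument.
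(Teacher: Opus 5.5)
Your overall architecture matches the paper's: the function $h$ and the threshold $D$, $\mathcal P^0=\emptyset$, local minimization for the ground state, a Pohozaev-constrained minimax for the excited state, the bound $m^-<m^++\frac13C_{\Sob}^{3/2}$, and the rescaling $\epsilon^4{\bf w}(\epsilon^2x)$. However, the step you call the main obstacle rests on the wrong mechanism. The coupling $\mathrm{Re}\int u_1u_2\overline u_3$ is linear in each component. So adding $tU_\varepsilon$ to one component of the ground state produces only the cross term $-\alpha t\int U_\varepsilon u_2u_3$, which is linear in $t$. That term is indeed of order $\varepsilon^{1/2}$, but so are $t\int\nabla u_1\cdot\nabla U_\varepsilon$, $-t\int u_1^5U_\varepsilon$, and the change in the coupling caused by the mass correction. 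By the Euler--Lagrange equation of ${\bf u}$, all these linear terms cancel up to $O(\varepsilon)+o(\varepsilon^{1/2})$. This is what the identities \eqref{eq:AL}--\eqref{eq:ide3.22b} in the proof of Lemma \ref{lem3.3} do, with the mass restored by the dilation $w\mapsto s^{1/2}w(sx)$, which leaves $\|\nabla w\|_2$ and $\|w\|_6$ unchanged. The strictly negative gain of order $\varepsilon^{1/2}$ is $-t^5\int u_1U_\varepsilon^5\approx -c\,t^5u_1(0)\,\varepsilon^{1/2}$. It comes from the critical term, via $(a+b)^6\ge a^6+b^6+6(a^5b+ab^5)$, and it needs $u_1(0)>0$, i.e.\ positivity of the ground state. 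Also, the bubble must sit in a single component, since each bubble costs $\frac13C_{\Sob}^{3/2}$.

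The second gap is the sign of the Lagrange multipliers. Testing the equations with $u_j$ and using $P({\bf u})=0$ only gives $\lambda_1a_1^2+\lambda_2a_2^2=\frac32\alpha\,\mathrm{Re}\int u_1u_2\overline u_3>0$. Negative energy does not upgrade this to $\lambda_1>0$ and $\lambda_2>0$ separately. Yet separate positivity is exactly what excludes mass loss for the weak limit, because what one obtains is $\lambda_1(a_1^2-Q_1({\bf u}))+\lambda_2(a_2^2-Q_2({\bf u}))=0$ with both brackets nonnegative (cf.\ \eqref{eq:d51}). Strict subadditivity "by scaling" cannot replace this. Scaling only compares mass pairs along rays $(\theta a_1,\theta a_2)$, while two mixed-mass constraints allow non-proportional splittings.

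The paper proceeds in two steps. First it shows no component of the weak limit vanishes: otherwise the coupling disappears, and $P({\bf u}_n)\to0$ forces either $\|\nabla{\bf u}_n\|_2\to0$ or $\liminf\|\nabla{\bf u}_n\|_2^2\ge C_{\Sob}^{3/2}$, both incompatible with the level. Then it applies the Liouville-type lemma \cite[Lemma A.2]{IN}: if $\lambda_1\le0$, then $-\Delta u_1\ge0$ with $0\le u_1\in L^2(\R^3)$, so $u_1\equiv0$. This argument works verbatim at the positive excited level, where your energy-sign reasoning gives nothing.

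A smaller omission in (ii): $P({\bf v})=0$ alone does not rule out $\|\nabla{\bf v}\|_2\to0$. You need ${\bf v}\in\mathcal P^-$, which gives $\|\nabla{\bf v}\|_2^2<9\|{\bf v}\|_6^6\le 9C_{\Sob}^{-3}\|\nabla{\bf v}\|_2^6$.
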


We list a few comments that the results above deserve.
We list a few relevant comments that the results above deserve.
\begin{remark}
Theorem \ref{th1.1} \textup{(i)} indicates that there are two physical states of \eqref{eqA0.2}. The ground state is at a negative energy level, and the excited state lies at a positive energy level. The set of ground states $\mathcal G$, containing a-priori complex-valued ground states, has the following structure:
\begin{equation*}
\mathcal G=\left\{ (e^{i\theta_1}u_1,e^{i\theta_2}u_2,e^{i(\theta_1+\theta_2)}u_3) \quad \hbox{ s.t. } \quad \theta_1,\theta_2\in \R \right\},
\end{equation*}
where $(u_1,u_2,u_3)\in S(a_1,a_2)$ is a positive, radial ground state of \eqref{eqA0.2}. See the proof of Theorem \ref{th1.1} for details.\smallskip

\noindent Theorem \ref{th1.1} \textup{(ii)} shows a mass collapse profile of the two kind of physical states, one tends to vanish and the other may tend to the Aubin-Talenti bubble. 
In the context of normalized solutions for the Schr\"{o}dinger equation with critical Sobolev exponent, such asymptotic phenomenon (depending on other parameters instead of the mixed masses) has been recently observed in \cite{Soave2}, and depicted more clearly in \cite{WW}.\smallskip 

\noindent  Furthermore, by precisely calculating the upper bound of the ground state energy, we provide in Theorem \ref{th1.1} \textup{(iii)} a precise refined mass collapse profile of ground states. This is quite new in the literature and motivated by the papers \cite{gl,GL} concerning ground states of two-component attractive Bose-Einstein condensates. Note from \eqref{def:D} that $a_1, a_2$ can be taken arbitrary large by taking $\alpha$ small enough and the above mass collapse phenomenon occurs as the coupling frequency $\alpha$ tends to infinity.
\end{remark}

\begin{remark}
Compared to the energy subcritical case considered in our previous work \cite[Theorem 2]{FLY}, a lower bound of the coupling frequency $\alpha$ is removed in searching for excited states. The main reason is that we here improve the control of the energy level as follows:
\begin{equation}\label{moun-pass}
m^-(a_1,a_2)<\frac{1}{3}C_{\Sob}^{\frac{3}{2}}+m^+(a_1,a_2);
\end{equation}
namely, the excited state energy level is less than the usual critical threshold plus the ground state energy, see Lemma \ref{lem3.3} for details. The choice of the testing paths used to prove \eqref{moun-pass} is motivated by \cite{WW}. Note that, due to the introduction of three waves interaction term which is sign-indefinite, the treatment of this energy estimate requires new ideas and more refined treatment. See the  recent paper \cite{LWYZ} by the second and third author  for a similar sign-indefinite variational problem. 
\end{remark}

We now introduce the dynamical results on standing waves contained in the paper. In order to state them, we start by recalling the following definitions.

\begin{definition} \rm
\textup{(i)} We say that the set $\mathcal G$ is orbitally stable if $\mathcal G\neq \emptyset$ and for any $\varepsilon>0$, there exists a $\delta>0$ such that,  provided that the initial datum ${\bm \psi}_0=\left(\psi_1(0),\psi_2(0),\psi_3(0)\right)$ for \eqref{eqA0.1} satisfies
\begin{equation*}
\inf_{{\bf u}\in \mathcal G}\|{\bm \psi}_0-{\bf u}\|_{H^1(\R^3, \mathbb C^3)}< \delta,
\end{equation*}
then $\bm{\psi}(t)$ is globally defined and
\begin{equation*}\sup_{t\in \R}
\inf_{{\bf u}\in \mathcal G}\|{\bm \psi}(t)-{\bf u}\|_{H^1(\R^3, \mathbb C^3)}< \varepsilon,
\end{equation*}
where ${\bm \psi}(t)$ is the solution to \eqref{eqA0.1} corresponding to the initial condition ${\bm \psi}_0$.\smallskip

\noindent \textup{(ii)} A standing wave $(e^{i\lambda_1t}u_1,e^{i\lambda_2t}u_2,e^{i\lambda_3t}u_3)$ is said to be strongly unstable if, for any $\varepsilon>0$, there exists ${\bm \psi}_0\in H^1(\R^3,\mathbb{C}^3)$ such that $\|{\bf u}-{\bm \psi}_0\|_{H^1(\R^3, \mathbb C^3)}<\varepsilon$, and ${\bm \psi}(t)$ blows-up in finite time. 
\end{definition}

Note that the orbital stability of the set $\mathcal G$ implies the global existence of solutions to \eqref{eqA0.1} for initial datum close enough to the set $\mathcal G$. We underline that this fact is nontrivial due to that energy critical exponent appearing  in \eqref{eqA0.1}.
In the energy subcritical range $2<p<6$, such an orbital stability results for ground states related to  \eqref{eqA0.2sub-crit}  has been proved in \cite{AH} in the mass sub-critical range $2<p<\frac{10}{3}$, and in   our previous work \cite{FLY} for the mass critical/super-critical  and energy  subcritical range $\frac{10}{3}\leq p<6$. \\

We now extend it the energy critical case.

\begin{theorem}\label{th1.2}
Let $\alpha, a_1, a_2>0$ and $\max\{a_1,a_2\}<D$, where $D$ is defined in \eqref{def:D}. Then we have: \smallskip
 
\noindent \textup{(i)}
the ground state set $\mathcal G$ is orbitally stable;\smallskip

\noindent \textup{(ii)} the standing wave constructed by  ${\bm \psi}(t,x)=\left(e^{i\lambda_1 t}v_1,e^{i\lambda_1 t}v_2,e^{i(\lambda_1+\lambda_2)t}v_3\right)$ with the excited state ${\bf v}_{a_1,a_2}$ is strongly unstable.
\end{theorem}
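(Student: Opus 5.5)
The plan follows the Cazenave--Lions compactness argument for (i), adapted to the local minimization structure coming from Theorem \ref{th1.1}, and a virial/Pohozaev argument for (ii).

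\textbf{Part (i): variational set-up.} I would use the Pohozaev functional
\[
P({\bf u})=\sum_j\|\nabla u_j\|_2^2-\sum_j\|u_j\|_6^6-\tfrac{3}{2}\alpha\,\mathrm{Re}\!\int u_1u_2\overline u_3 ,
\]
and the fiber maps $t\mapsto E(t\star{\bf u})$, where $t\star{\bf u}=t^{3/2}{\bf u}(t\cdot)$. Under $\max\{a_1,a_2\}<D$, the Gagliardo--Nirenberg inequality with optimal constant expressed through $\|W\|_2$, together with the Sobolev inequality, gives the following. There exist $0<R_0<R_1$ such that $E>0$ on $\{\|\nabla{\bf u}\|_2=R_0\}\cap S(a_1,a_2)$. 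Moreover,
\[
m^+(a_1,a_2)=\inf\{E({\bf u}):{\bf u}\in S(a_1,a_2),\ \|\nabla{\bf u}\|_2<R_0\}<0
\]
is the ground state level. This is presumably how Theorem \ref{th1.1}(i) is proved.

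\textbf{Part (i): compactness and stability.} The key step is that every minimizing sequence for $m^+$ in the ball $B_{R_0}$ is precompact in $H^1$ up to translations. I would establish this as follows.
\begin{itemize}
\item Vanishing is excluded because $m^+<0$: if the sequence vanished, the cubic coupling term would tend to $0$, forcing $\liminf E\ge 0$.
\item Dichotomy is excluded by strict subadditivity $m^+(a_1,a_2)<m^+(b_1,b_2)+m^+(a_1-b_1,a_2-b_2)$. I would derive this from the scaling inequality $m^+(\theta a)<\theta^2 m^+(a)$ for $\theta>1$, applied to each mixed mass.
\item The critical $L^6$ terms cannot concentrate, since $\|\nabla{\bf u}\|_2<R_0$ keeps the sequence strictly below the Sobolev threshold.
\end{itemize}

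Stability then follows by contradiction. Take initial data ${\bm\psi}_{0,n}\to\mathcal G$. The corresponding solutions stay in the ball $B_{R_0}$ by conservation of energy and masses combined with continuity, since $E>0$ on the sphere $\|\nabla{\bf u}\|_2=R_0$ while $E({\bm\psi}_{0,n})\to m^+<0$. This bound on $\|\nabla{\bm\psi}\|_2$ also gives global existence, via the energy-critical blow-up alternative (a uniform bound below the ground state gradient threshold). Compactness then yields a contradiction with the assumed escape from $\mathcal G$. One must also use the phase description of $\mathcal G$ from the Remark after Theorem \ref{th1.1}, so that closeness up to translation and phases is closeness to $\mathcal G$.

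\textbf{Part (ii): blow-up near the excited state.} Take ${\bm\psi}_0=t\star{\bf v}$ with $t>1$ close to $1$; then ${\bm\psi}_0\to{\bf v}$ in $H^1$ as $t\to1$. Since ${\bf v}$ is a mountain pass / Pohozaev-manifold minimizer at level $m^-$, the fiber $s\mapsto E(s\star{\bf v})$ has a strict maximum at $s=1$. Hence $E({\bm\psi}_0)<m^-$ and $P({\bm\psi}_0)<0$. I would show the set
\[
\{E<m^-,\ P<0,\ \|\nabla{\bf u}\|_2>R_0\}\cap S(a_1,a_2)
\]
is invariant under the flow. The argument is that $P=0$ with $\|\nabla{\bf u}\|_2>R_0$ forces $E\ge m^-$. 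Next I would obtain a uniform bound $P({\bm\psi}(t))\le -\delta<0$ by the standard fiber argument: $E({\bf u})-E(s_{\bf u}\star{\bf u})\ge$ a quantity controlling $P$, using the concavity-type structure of the fiber beyond its maximum point.

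\textbf{Main obstacle.} Finite-time blow-up requires a localized virial identity, since the data are only $H^1$ and need not have finite variance. For radial data, ${\bf v}$ is radial (via rearrangement, as for the ground state), and Ogawa--Tsutsumi localized virial estimates give $\frac{d^2}{dt^2}V_R\le 8P+o_R(1)\le -4\delta$. The error terms from the cubic coupling and the $L^6$ terms are controlled by the radial Strauss lemma and the conserved masses. I expect this to be the main technical obstacle, because the $L^6$ term in the error is energy-critical and must be handled with the uniform kinetic bound available on the finite lifespan and smallness of the localized mass outside the ball $B_R$.
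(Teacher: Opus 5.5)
\textbf{Gap: global existence in part (i).} The problem is the sentence ``this bound on $\|\nabla{\bm\psi}\|_2$ also gives global existence, via the energy-critical blow-up alternative''. No blow-up alternative in terms of $\sup_t\|\nabla{\bm\psi}(t)\|_2$ is available here. The existence time from the local theory (Proposition \ref{pro1}) is controlled by the smallness of $\|e^{it\Delta}{\bm\psi}_0\|_{{\bf X}_T}$. That quantity depends on the profile of the datum, i.e.\ on how concentrated its $\dot{\bf H}^1$ mass is, and not only on its ${\bf H}^1$ norm. So $T^{\max}<\infty$ is a priori compatible with $\sup_{t<T^{\max}}\|\nabla{\bm\psi}(t)\|_2<R_0$; the paper stresses exactly this point.

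Turning a sub-threshold kinetic bound into global existence is a Kenig--Merle type theorem (concentration-compactness plus rigidity). Such a theorem is established for the scalar equation (radial data in $\R^3$), but not for this system, which has a quadratic three-wave coupling and non-radial perturbations of $\mathcal G$. The paper in fact leaves global existence away from $\mathcal G$ open. Small-data theory does not rescue you either. As $\max\{a_1,a_2\}\uparrow D$ one has $R_0\uparrow\rho_0=(C_{\Sob}^3/3)^{1/4}=3^{-1/4}C_{\Sob}^{3/4}$, a fixed fraction of the Aubin--Talenti level and not a perturbative size.

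\textbf{What is missing.} You need to use proximity to $\mathcal G$ rather than the $\dot{\bf H}^1$ bound. Your compactness argument does not require global existence. Run by contradiction on $[0,T^{\max})$, it already shows that ${\bm\psi}(t)$ stays $\varepsilon$-close to $\mathcal G$ up to the maximal time; this is Lemma \ref{local-stability}. The set $\mathcal G$ is compact modulo translations and phases, which leave $\|e^{it\Delta}\cdot\|_{{\bf X}_T}$ invariant, and $\|e^{it\Delta}{\bf f}\|_{{\bf X}_T}\le C\|{\bf f}\|_{{\bf H}^1}$. Hence there is a uniform $T>0$ with $\|e^{it\Delta}{\bm\psi}\|_{{\bf X}_T}<\gamma_0$ for all ${\bm\psi}$ in an ${\bf H}^1$-neighbourhood of $\mathcal G$ (Proposition \ref{Pro2}). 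Proposition \ref{pro1} then restarts the flow from every ${\bm\psi}(t)$ for a fixed time $T_0$, which forces $T^{\max}=\infty$.

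\textbf{Secondary point: subadditivity.} The scaling inequality $m^+(\theta a_1,\theta a_2)<\theta^2m^+(a_1,a_2)$ only gives strict subadditivity when the two dichotomy pieces carry proportional pairs of mixed masses. Splittings not parallel to $(a_1^2,a_2^2)$, including those where one mixed mass is zero, need a separate argument; the paper delegates this to \cite{KO}.

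\textbf{Part (ii).} This is essentially the paper's argument. The localized virial you flag as the main obstacle is unnecessary. Since $\lambda_1,\lambda_2>0$, ${\bf v}$ decays exponentially, so $s\star{\bf v}$ has finite variance and the exact identity $I''=8P\le-8\eta$ can be used directly.
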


Theorem \ref{th1.2} gives the simultaneous existence of stable and unstable standing waves for \eqref{eqA0.1}. The proof of   point \textup{(i)} of Theorem \ref{th1.2} follows the approach proposed in \cite{CaW}, which was  recently used to deal with the nonlinear Schr\"{o}dinger equations with critical Sobolev exponent. To obtain the orbital stability of the ground states set, two elements are essential, see the recent advances \cite{Soave2,JJL} regarding critical NLS equations with mixed nonlinearities. Specifically, we first prove the relative compactness, up to translation, of all minimizing sequences for the energy functional $E({\bf u})$ constrained on a suitable subset of $S(a_1,a_2)$. Secondly, we show global existence for solutions to \eqref{eqA0.1} with initial data close to $\mathcal G$.
\vskip1mm

\begin{remark}
\textup{(i)} Let us comment on the two ingredients described above in the context of equation \eqref{eqA0.2}. The first element   is proved along the analysis towards the existence of the ground states. In proving such existence result, due to the indefinite sign of the three wave interaction term in the corresponding energy functional, we  introduce an additional constrain given by an inequality, see \eqref{eq1.2}. Consequently, this makes appear further difficulties in proving the compactness of related minimizing sequences, which also marks a difference with respect to constrained variational problems with a sign-definite  structure, see for example  \cite{gl,GL,JJL,Soave2,WW}. Thus, we need to derive a better control of the mixed masses and suitable subset of $S(a_1,a_2)$ to guarantee that the minimizing sequence is far from the  boundary of the further constraint.\smallskip 

\noindent \textup{(ii)} As usual in the context of NLS equations with critical nonlinearity, it is not straightforward to establish a global theory. The method used in \cite{FLY} for \eqref{eqA0.2sub-crit} in the intercritical range $\frac{10}{3}\leq p<6$ does not work here. In particular, we cannot deduce global existence results from the a-priori estimates of $\sum^3_{j=1}\|\nabla u_j(t)\|^2_{L^2(\R^3)}$ that follow from the conservation laws. In the presence of the energy critical term, the local theory asserts that the time of existence for $H^1$-solutions depends instead on the profile of the initial data (see \cite[Theorem 4.5.1]{CT}). To overcome this difficulty, for energy critical nonlinear Schr\"{o}dinger equation, following \cite{CaW}, the authors of \cite{JJL} show that for initial data sufficiently close to ground state set the global existence holds. Hence, we first prove a uniform local existence result in our context, see Proposition \ref{pro1}. Using that the set $\mathcal{G}$ is compact, we show that for initial data sufficiently close to the set $\mathcal{G}$ global existence of solutions holds. With the help of these two elements, orbital stability of the set of ground states follows.
\end{remark}

For the proof of point  \textup{(ii)} of Theorem \ref{th1.2},  we further implement the strategy developed in our previous paper \cite{FLY}. Although the classical blow-up alternative seems unavailable in the energy critical setting, the virial-type estimate  established in \cite{FLY} still applies. Combining the conservation of mixed masses $Q_1$ and $Q_2$, we then obtain that the partial summation of gradient terms related to mixed masses blows-up in finite time. This leads to the strong instability. We remark that it remains open wether the global existence holds away from $\mathcal{G}$ or not.

\vskip1mm

We conclude by discussing the scattering of global solutions to \eqref{eqA0.1}.\smallskip

\noindent We recall that scattering for a global solution ${\bm \psi}(t)$ to \eqref{eqA0.1}   occurs if there  exist ${\bm \psi}_{+},{\bm \psi}_{-}\in H^1(\R^3,\mathbb{C}^3)$ such that
\begin{equation*}
\lim_{t\to -\infty}\|{\bm \psi}(t)- e^{it\Delta}{\bm \psi}_{-}\|_{H^1(\R^3, \mathbb C^3)}=0 \quad \text{and} \quad \lim_{t\to +\infty}\|{\bm \psi}(t)- e^{it\Delta}{\bm \psi}_{+}\|_{H^1(\R^3, \mathbb C^3)}=0.
\end{equation*}
Here $e^{it\Delta}$ stands for the linear Schr\"odinger propagator, and when applied to a vector function it is meant  to act component-wise.  \smallskip

In general, it is known that scattering does not always occur even for global solutions. Standing waves are basic example of global non-scattering solutions. As we will see in the next theorem, the presence of the three wave interaction term which is of mass-subcritical type prevents small data scattering for system \eqref{eqA0.1}. Indeed, as in Theorem \ref{th1.1} we show that ground states fulfills
$\sum^3_{j=1}\|\nabla u_j\|^2_{L^2(\R^3)}\to 0$ when $(a_1,a_2)\to (0^+,0^+)$, small data cannot scatter.
\begin{theorem}\label{th1.3}
Let $\alpha, a_1, a_2>0$ and $\max\{a_1,a_2\}<D$, then small data scattering cannot hold.
\end{theorem}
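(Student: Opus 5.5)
The strategy is to exhibit, for every $\delta>0$, an initial datum of $H^1$-norm less than $\delta$ whose solution to \eqref{eqA0.1} is global and does not scatter. The natural candidates are the ground state standing waves from Theorem \ref{th1.1}. Fix a sequence $(a_1^n,a_2^n)\to(0^+,0^+)$; we may take $a_1^n=a_2^n=\epsilon_n$, which clearly satisfies $\max\{a_1^n,a_2^n\}<D$. Let ${\bf u}_n:={\bf u}_{\epsilon_n,\epsilon_n}$ be the ground state given by Theorem \ref{th1.1}(i), with Lagrange multipliers $\lambda_1^n,\lambda_2^n$ and $\lambda_3^n=\lambda_1^n+\lambda_2^n$. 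Then
\[
{\bm \psi}_n(t)=\big(e^{i\lambda_1^nt}u_1^n,e^{i\lambda_2^nt}u_2^n,e^{i\lambda_3^nt}u_3^n\big)
\]
is a global solution of \eqref{eqA0.1}. Its initial datum is small in $H^1$: the masses equal $\|{\bf u}_n\|_{L^2}^2\le Q_1+Q_2=2\epsilon_n^2\to0$, and $\|\nabla{\bf u}_n\|_{L^2}\to0$ by Theorem \ref{th1.1}(ii), or by part (iii) at the rate $\epsilon_n^3$.

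It remains to show that ${\bm\psi}_n$ does not scatter. Suppose instead that $\|{\bm\psi}_n(t)-e^{it\Delta}{\bm\psi}_+\|_{H^1}\to0$ as $t\to+\infty$. By the dispersive estimate and density of $L^{6/5}\cap H^1$ in $H^1$, we have $\|e^{it\Delta}{\bm\psi}_+\|_{L^3}\to0$. Combined with the uniform $H^1$ bound and Sobolev embedding, this gives $\|{\bm\psi}_n(t)\|_{L^3}\to0$. However, $\|{\bm\psi}_n(t)\|_{L^3}=\|{\bf u}_n\|_{L^3}$ is constant in time, so it would have to vanish. That forces ${\bf u}_n\equiv0$, contradicting ${\bf u}_n\in S(\epsilon_n,\epsilon_n)$ with $\epsilon_n>0$. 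An equivalent route uses the $L^6$ norm, since $\|e^{it\Delta}f\|_{L^6}\to0$ for $f\in H^1$.

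The only point that needs care is the claim that the ground state is nontrivial and genuinely time-periodic, so that no component is a free wave. This is immediate from the mass constraint $Q_j({\bf u}_n)=\epsilon_n^2>0$. One should also note that the decay of the linear flow applies componentwise; this is standard.

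Hence for every $\delta>0$ there is a nonscattering global solution with initial datum of $H^1$-norm less than $\delta$, so small data scattering fails. The main input is really Theorem \ref{th1.1}(ii), which guarantees that ground states can be chosen arbitrarily small in $H^1$ under the constraint $\max\{a_1,a_2\}<D$. The remaining steps are routine.
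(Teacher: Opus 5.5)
Your proposal is correct and follows the same route as the paper. Both arguments use the ground-state standing waves as arbitrarily small non-scattering global solutions, relying on $\|{\bf u}\|_{\dot{\bf H}^1}\to 0$ as $(a_1,a_2)\to(0^+,0^+)$; the paper gets this from $P({\bf u})=0$, $E({\bf u})<0$ and \eqref{eq:b7-prev}, and only cites \cite{BeJe} for non-scattering, whereas you also spell out why a nontrivial standing wave cannot scatter: $\|{\bm\psi}(t)\|_{L^3}$ stays constant while $e^{it\Delta}{\bm\psi}_+$ decays dispersively.
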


Theorem \ref{th1.3} shows a remarkable difference with respect to the classical energy critical NLS equation. Indeed, when $\alpha=0$, system \eqref{eqA0.1} reduces to the energy critical equation
\begin{equation}\label{energy-cri}
i\partial_tu+\Delta u+|u|^4u=0.
\end{equation}
Starting from the pioneering work \cite{CaW}, T. Cazenave and F. Weissler proved the local well-posedness and global well-posedness for small initial data. Hence, the presence of the three wave interaction term depicts a completely different scenario compared to \eqref{energy-cri}. It is worth mentioning that scattering for ``large data'' for solutions to \eqref{energy-cri} has been solved under certain regimes only recently by the famous concentration-compactness and rigidity argument by C.E. Kenig and F. Merle, see \cite{KM}. They proved global well-posedness and scattering for radial solutions with energy and kinetic energy less than those of ground state in dimensions $3\leq N\leq5$. Their result is sharp because the ground state (Aubin-Talenti function) does not scatter. R. Killip and M. Vi\c san \cite{KV}  extended the result of \cite{KM} to the non-radial case in dimension  $N\ge 5$.

\vskip1mm

\begin{remark}\rm
In light of the previous remark,  Theorem \ref{th1.3} indicates that the presence of the three wave interaction term prevents the occurrence of small data scattering. From the physical point of view, Theorems \ref{th1.2} also shows that the introduction of a linear coupling term leads a stabilization of a system which was originally unstable.
\end{remark}

\subsection{Notations}
In the paper, we use the following notations.  As the space dimension is fixed, working in the physical space $\R^3$, we simply write $L^p:=L^p(\R^3)$ with norm $\|f\|_{L^p}=\|f\|_p$ for  Lebesgue spaces,  $W^{1,p}:=W^{1,p}(\R^3)$ and $H^1:=H^1(\R^3)$ when $p=2$ for the classical Sobolev space, where  $f:\R^3\to\mathbb C$ or $f:\R^3\to\mathbb R$. For vector functions ${\bf f}:\R^3\to \mathbb C^3$ or ${\bf f}:\R^3\to \mathbb R^3$  where ${\bf f}=(f_1,f_2,f_3)$ we define  ${\bf L}^p:=L^p(\R^3, \mathbb C^3)$ or $L^p(\R^3, \mathbb R^3)$, and  ${\bf H}^1:=H^1(\R^3, \mathbb C^3)$ or $H^1(\R^3, \mathbb R^3)$, endowed with the following norms:
\[
\|{\bf f}\|_{p}^p:=\sum_{j=1}^3\|f_j\|_p^p
\]
and
\[
\|{\bf f}\|_{{\bf H}^1}^2:=\|{\bf f}\|_2^2+\|\nabla{\bf f}\|_2^2
\]
where $\nabla{\bf f}:=(\nabla f_1,\nabla f_2,\nabla f_3)$. We will often use the homogeneous space $\dot H^1$ endowed with the norm $\|f\|_{\dot H^1}=\|\nabla f\|_{2}$, and analogous notations in case of vector functions, i.e., $\|{\bf f}\|_{\dot{\bf H}^1}=\|\nabla{\bf f}\|_{2}$. Integrals  
 $\int_{\R^3} f  dx$ are simply denoted  by $\int f$. $\mathrm{Re}z$ and $\mathrm{Im}z$ are for the real and imaginary part of a complex number $z$, and $\overline z$ stands for the complex conjugate of $z$.

\section{Preliminary tools}
In this section, we give some preliminaries useful for the rest of the paper.
For $2<p<6$, let us recall  the Gagliardo-Nirenberg inequality 
\begin{equation*} 
 \|u\|_{p} \leq C_{\GN}(p) \|\nabla u\|_{2}^{\gamma_p} \|u\|_{2}^{1-\gamma_p}, \quad \forall\,  u \in {H}^{1},
\end{equation*}
where $C_{\GN}(p)$ is the best constant in the Gagliardo-Nirenberg-Sobolev inequality $H^{1} \hookrightarrow L^{p}$  in $\R^3$, and $\gamma_p=\frac{3(p-2)}{2p}$. For $p=3$, we have $C_{\GN}(3)=\left(\frac{2}{\|W\|_2}\right)^{\frac{1}{3}}$,  where $W$ is defined in \eqref{eq:w}.\\

We start by recalling the following Lemma, giving a Pohozaev identity.

\begin{lemma} \label{lem2.1}
Let ${\bf u}\in {\bf H}^1$ be a solution to \eqref{eqA0.2}. Then the following identity holds true:
\begin{equation*} 
P({\bf u}):=\|\nabla{\bf u}\|_2^2-\|{\bf u}\|_6^6-\frac32\alpha \mathrm{Re} \int u_1u_2\overline{u}_3=0.
\end{equation*}
\end{lemma}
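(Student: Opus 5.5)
We plan to derive the identity $P({\bf u})=0$ by combining the Nehari-type identity, obtained by testing the system against ${\bf u}$, with the classical Pohozaev identity, obtained by testing against $x\cdot\nabla {\bf u}$. Eliminating the frequencies $\lambda_j$ between the two is possible precisely because $\lambda_3=\lambda_1+\lambda_2$.

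\emph{Step 1 (Nehari).} We multiply the $j$-th equation of \eqref{eqA0.2} by $\overline u_j$, integrate, take real parts and sum over $j$. The coupling terms produce $\alpha\,\mathrm{Re}\int(u_3\overline u_2\overline u_1+u_3\overline u_1\overline u_2+u_1u_2\overline u_3)=3\alpha\,\mathrm{Re}\int u_1u_2\overline u_3$. This gives
\[
\|\nabla{\bf u}\|_2^2+\sum_{j=1}^3\lambda_j\|u_j\|_2^2=\|{\bf u}\|_6^6+3\alpha\,\mathrm{Re}\int u_1u_2\overline u_3 .
\]

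\emph{Step 2 (Pohozaev).} We multiply the $j$-th equation by $x\cdot\nabla\overline u_j$, take real parts, integrate and sum, using the standard identities in $\R^3$:
\[
\mathrm{Re}\int(-\Delta u)\,x\cdot\nabla\overline u=-\tfrac12\|\nabla u\|_2^2,\qquad \mathrm{Re}\int u\,x\cdot\nabla\overline u=-\tfrac32\|u\|_2^2,\qquad \mathrm{Re}\int|u|^4u\,x\cdot\nabla\overline u=-\tfrac12\|u\|_6^6 .
\]
For the coupling terms we observe that
\[
\mathrm{Re}\bigl(u_3\overline u_2\,x\cdot\nabla\overline u_1+u_3\overline u_1\,x\cdot\nabla\overline u_2+\overline u_1\overline u_2\,x\cdot\nabla u_3\bigr)=\mathrm{Re}\,x\cdot\nabla(\overline u_1\overline u_2u_3).
\]
In the third equation we used the real part of the conjugate. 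Integrating by parts then gives $-3\alpha\,\mathrm{Re}\int u_1u_2\overline u_3$. Collecting terms yields
\[
-\tfrac12\|\nabla{\bf u}\|_2^2-\tfrac32\sum_j\lambda_j\|u_j\|_2^2=-\tfrac12\|{\bf u}\|_6^6-3\alpha\,\mathrm{Re}\int u_1u_2\overline u_3 .
\]

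\emph{Step 3 (elimination).} We multiply the Nehari identity by $\tfrac32$ and add it to the Pohozaev identity. This cancels the $\lambda$-terms and gives
\[
\|\nabla{\bf u}\|_2^2=\|{\bf u}\|_6^6+\tfrac32\alpha\,\mathrm{Re}\int u_1u_2\overline u_3,
\]
which is exactly $P({\bf u})=0$. Equivalently, one can regard this as the derivative at $t=1$ of $E$ along the mass-preserving dilation $t^{3/2}{\bf u}(tx)$.

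\emph{Main obstacle.} The real difficulty is justifying Step 2 for an ${\bf H}^1$ solution with critical nonlinearity, since $x\cdot\nabla u_j$ need not be an admissible test function. We intend to handle this in the standard way (Berestycki--Lions). First, elliptic regularity (Brezis--Kato for the critical term, with the coupling treated as a lower-order potential) gives $u_j\in W^{2,p}_{loc}$ and decay of $\nabla u_j$. Then we multiply by a cutoff $\varphi(x/R)\,x\cdot\nabla\overline u_j$ and let $R\to\infty$; the error terms vanish by dominated convergence because $|\nabla u|^2,|u|^2,|u|^6$ and $|u|^3$ are all integrable.
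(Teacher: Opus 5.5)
Your proof is correct and follows the classical route the paper defers to (it simply cites Berestycki--Lions): a Nehari identity, a Pohozaev identity justified by Brezis--Kato regularity plus the cutoff argument, and then elimination of the frequencies. All your constants check out, but one remark is wrong: the cancellation in Step 3 does not use $\lambda_3=\lambda_1+\lambda_2$, since each $\lambda_j\|u_j\|_2^2$ enters the two identities with coefficients $1$ and $-\tfrac32$, so $P({\bf u})=0$ holds for arbitrary real $\lambda_j$; likewise the decay of $\nabla u_j$ you mention is not needed, only $H^2_{\mathrm{loc}}$ regularity and the global integrability you list.
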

\begin{proof}
The proof is classical, and we refer the reader to \cite{BL}. 
\end{proof}

We now introduce the $L^2$-norm-preserving dilation operator
\begin{equation*}
s\star {\bf u}(x):=\left(s^{\frac{3}{2}}u_1(sx),s^{\frac{3}{2}}u_2(sx),s^{\frac{3}{2}}u_3(sx)\right)
\end{equation*}
with $s>0$. As $\lim\limits_{s\to \infty}E(s\star {\bf u})=-\infty$, we see that $\inf\limits_{{\bf u}\in S(a_1,a_2)}E({\bf u})=-\infty$. Furthermore, we introduce the Pohozaev set
\begin{equation}
\mathcal{P}_{a_1,a_2}:=\left\{{\bf u}\in S(a_1,a_2): P({\bf u}):=\|\nabla{\bf u}\|_2^2-\|{\bf u}\|_6^6-\frac32\alpha \mathrm{Re} \int u_1u_2\overline{u}_3=0\right\}.
\end{equation}

The Pohozaev set $\mathcal{P}_{a_1,a_2}$ is related to the fiber maps
\begin{equation}\label{eq:fiber}
\Psi_{{\bf u}}(s)=E(s\star{\bf u})=\frac{s^2}{2}\|\nabla{\bf u}\|_2^2-\frac{s^{6}}{6}\|{\bf u}\|_6^6-s^{\frac{3}{2}}\alpha\mathrm{Re}\int u_1u_2\overline{u}_3.
\end{equation}
Indeed, we have $s\Psi'_{{\bf u}}(s)=P(s\star{\bf u})$.
Note that $\mathcal{P}_{a_1,a_2}$ can be divided into the disjoint union $\mathcal{ P}_{a_1,a_2}=\mathcal{ P}_{a_1,a_2}^+\cup \mathcal{ P}_{a_1,a_2}^0\cup \mathcal{ P}_{a_1,a_2}^-$, where
\begin{equation}\label{eq:c41}
\begin{aligned}
	\mathcal{ P}_{a_1,a_2}^+&:=\left\{{\bf u}\in \mathcal{ P}_{a_1,a_2}  \ \hbox{ s.t. }\ \Psi_{{\bf u}}''(1)>0\right\},\\
	\mathcal{ P}_{a_1,a_2}^0&:=\left\{{\bf u}\in \mathcal{ P}_{a_1,a_2}  \ \hbox{ s.t. } \ \Psi_{{\bf u}}''(1)=0\right\},\\
	\mathcal{ P}_{a_1,a_2}^-&:=\left\{{\bf u}\in \mathcal{ P}_{a_1,a_2} \  \hbox{ s.t. } \ \Psi_{{\bf u}}''(1)<0\right\}.
\end{aligned}
\end{equation}

To show that the energy functional $E|_{S(a_1,a_2)}$ has a concave-convex geometry (i.e., a structure with a local minimum and a global maximum, where the local minimum is strictly less than zero and the global maximum is strictly greater than zero; see Lemma \ref{lem2.11} below), we introduce the following constraint
\begin{equation}\label{eq1.2}
\mathcal{M}:=\left\{{\bf u}\in {\bf H}^1 \ \hbox{ s.t. } \ \mathrm{Re}\int u_1u_2\overline{u}_3 >0\right\},
\end{equation}
and then define 
\begin{equation*}
m^{+}(a_1,a_2)=\inf_{{\bf u}\in \mathcal{P}^{+}_{a_1,a_2}\cap \mathcal{M}} E({\bf u})
\end{equation*}
and 
\begin{equation*}
m^{-}(a_1,a_2)=\inf_{{\bf u}\in \mathcal{P}^{-}_{a_1,a_2}\cap \mathcal{M}} E({\bf u}).
\end{equation*}

In the spirit of Wei and Wu \cite{WW}, for ${\bf u}\in \mathcal{M}$, we see that the presence of the mass subcritical term $\mathrm{Re}\int u_1u_2\overline{u}_3$ induces a convex-concave geometry of $E|_{S(a_1,a_2)}$ if $\alpha>0$ and $a_1,a_2>0$ are sufficiently small.\\

For ${\bf u}\in S(a_1,a_2)$, it is immediate to see that  $\|u_1\|_2\leq a_1$, $\|u_2\|_2\leq a_2$ and $\|u_3\|_2\leq\min\{a_1,a_2\}$.
By Sobolev's inequality and Young's inequality, we have
\begin{equation}\label{eq:z5}
\begin{aligned}
\frac{1}{6}\|{\bf u}\|^6_6\leq\frac{1}{6}C_{\Sob}^{-3}\sum^{3}_{j=1}\|\nabla u_j\|^{6}_2\leq A_1\|{\bf u}\|_{{\bf {\dot H}}^1}^{6},
\end{aligned}
\end{equation}
where $A_1:=\frac{1}{6}C_{\Sob}^{-3}$.
Similarly, we have
\begin{equation}\label{eq:z4}
\begin{aligned}
\left|\alpha\mathrm{Re}\int u_1u_2\overline{u}_3\right|&\leq\alpha \int |u_1||u_2||u_3|
\\
&\leq\frac{2\alpha}{3^{\frac{3}{4}}\|W\|_2} \max\{a_1,a_2\}^{\frac{3}{2}}\|{\bf u}\|_{{\bf {\dot H}}^1}^{\frac{3}{2}}=A_2\|{\bf u}\|_{{\bf {\dot H}}^1}^{\frac{3}{2}},
\end{aligned}
\end{equation}
where $A_2:=\frac{2\alpha}{3^{\frac{3}{4}}\|W\|_2}\max\{a_1,a_2\}^{\frac{3}{2}}$.
Then, combining \eqref{eq:z5} and \eqref{eq:z4} with the definition of the energy,  we get
\begin{equation*}\label{b3}
\begin{aligned}
E({\bf u})&=\frac{1}{2}\|{\bf u}\|_{{\bf {\dot H}}^1}^2-\frac{1}{6}\|{\bf u}\|_{{\bf L}^6}^6-\alpha \mathrm{Re}\int u_1u_2\overline{u}_3\\
&\geq \frac{1}{2}\|{\bf u}\|_{{\bf {\dot H}}^1}^2-A_1\|{\bf u}\|_{{\bf {\dot H}}^1}^6-A_2\|{\bf u}\|_{{\bf {\dot H}}^1}^{\frac{3}{2}}=:h(\|{\bf u}\|_{{\bf {\dot H}}^1}),
\end{aligned}
\end{equation*}
where
\begin{equation}\label{def:func-h}
h(\rho)=\frac{\rho^2}{2}-A_1\rho^{6}-A_2\rho^{\frac{3}{2}}.
\end{equation}
\vskip2mm

The next Lemma  below  shows that the functional $E$  has a concave-convex structure on $S(a_1,a_2)$.

\begin{lemma}\label{lem2.11}
Let $\alpha, a_1, a_2>0$. Let $D$ be as in \eqref{def:D} and $h$ as in \eqref{def:func-h}.\smallskip

\noindent \textup{(i)} If $\max\{a_1,a_2\}< D$, then $h(\rho)$ has a local minimum at negative level and a global maximum at positive level. Moreover, there exist $R_0=R_0(a_1,a_2)$, $R_1=R_1(a_1,a_2)$, and $\rho_0$ such that, $R_0<\max\{a_1,a_2\}D^{-1}\rho_0<\rho_0<R_1$, and
\begin{equation*}
h(R_0)=h(R_1)=0, \quad h(\rho)>0 \iff \rho \in (R_0,R_1).
\end{equation*}

\noindent \textup{(ii)} If $\max\{a_1,a_2\}=D$,  then $h(\rho)$ has a local minimum at negative level and a global maximum at level zero.
Moreover, we have
\[
h(\rho_0)=0 \quad \hbox{ and } \quad h(\rho)<0 \iff \rho\in(0, \rho_0) \cup (\rho_0,+\infty).
\]
\end{lemma}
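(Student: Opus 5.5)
The approach is to factor out the lowest power of $\rho$ and reduce everything to the study of a one–variable function with a single maximum. For $\rho>0$ write
\[
h(\rho)=\rho^{\frac32}\bigl(g(\rho)-A_2\bigr),\qquad g(\rho):=\frac12\rho^{\frac12}-A_1\rho^{\frac92},
\]
so that the sign of $h$ coincides with the sign of $g-A_2$. The key steps, in order, are as follows.

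\textbf{Step 1: shape of $g$.} We have $g(0)=0$ and $g(\rho)\to-\infty$ as $\rho\to\infty$. Moreover $g'(\rho)=\frac14\rho^{-\frac12}-\frac92A_1\rho^{\frac72}$ vanishes exactly once, at
\[
\rho_0^4=\frac{1}{18A_1}=\frac{C_{\Sob}^3}{3}.
\]
Hence $g$ is strictly increasing on $(0,\rho_0)$ and strictly decreasing on $(\rho_0,\infty)$. Since $A_1\rho_0^4=\frac1{18}$, its maximum value is
\[
g(\rho_0)=\rho_0^{\frac12}\Bigl(\frac12-\frac1{18}\Bigr)=\frac49\rho_0^{\frac12}.
\]
Note that $\rho_0$ depends only on $C_{\Sob}$, not on $a_1,a_2$.

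\textbf{Step 2: identify $D$.} Since $A_2=\frac{2\alpha}{3^{3/4}\|W\|_2}\max\{a_1,a_2\}^{3/2}$, the equation $A_2=\frac49\rho_0^{1/2}$ defines a threshold value of $\max\{a_1,a_2\}$. I would check by direct computation, inserting $\rho_0=3^{-1/4}C_{\Sob}^{3/4}$, that this threshold is exactly the constant $D$ in \eqref{def:D}. This is a routine but error-prone bookkeeping of the powers of $2$ and $3$, and it is the only place I expect any friction. Once it is verified, we have the identity
\[
A_2=g(\rho_0)\,t^{\frac32},\qquad t:=\frac{\max\{a_1,a_2\}}{D}.
\]

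\textbf{Step 3: case (i), $t<1$.} Here $A_2<\max g$, so by the monotonicity in Step 1 the equation $g=A_2$ has exactly two roots $R_0<\rho_0<R_1$, and $h>0$ exactly on $(R_0,R_1)$. To get $R_0<t\rho_0$ it suffices to show $g(t\rho_0)>A_2$. This reduces to
\[
\frac{t^{1/2}}2-\frac{t^{9/2}}{18}>\frac49t^{\frac32}
\iff 9-t^4>8t,
\]
which is true for $0<t<1$.

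For the extrema: $h(0)=0$ and $h<0$ on $(0,R_0)$, so $h$ attains a negative minimum on $[0,R_0]$ at an interior point, which is a local minimum. Since $h>0$ on $(R_0,R_1)$ and $h\to-\infty$ as $\rho\to\infty$, the global maximum of $h$ on $[0,\infty)$ is attained in $(R_0,R_1)$ and is positive.

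\textbf{Step 4: case (ii), $t=1$.} Here $A_2=g(\rho_0)$, and $g<A_2$ for every $\rho\neq\rho_0$. Hence $h(\rho_0)=0$ and $h<0$ on $(0,\rho_0)\cup(\rho_0,\infty)$, so $\rho_0$ is a global maximum at level zero. As before, the negativity of $h$ on $(0,\rho_0)$ together with $h(0)=h(\rho_0)=0$ yields an interior local minimum at a negative level.
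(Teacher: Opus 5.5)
Your proof is correct and follows essentially the paper's route: the same auxiliary function $\tilde h(\rho)=\frac12\rho^{1/2}-A_1\rho^{9/2}$ and the same $\rho_0=(18A_1)^{-1/4}$ with maximum value $\frac49\rho_0^{1/2}$. It also uses the same threshold computation, which does give exactly $D$, so your Step 2 checks out, and the same verification $\tilde h(t\rho_0)>A_2$. The only difference is how you get the extrema: you obtain the negative local minimum and the global maximum directly from the sign pattern of $h$ and continuity on compact intervals, whereas the paper also shows that $h$ has exactly two critical points, via $\hat h(\rho)=\rho^{1/2}-6A_1\rho^{9/2}$ and an auxiliary threshold $D_0>D$ that it reuses in Lemma~\ref{lem2.2}; the statement itself does not require this.
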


\begin{proof}
\textup{(i)} We first prove that $h$ has exactly two critical points. Indeed,
\[
h'(\rho)=0 \Longleftrightarrow \hat h(\rho)=\frac{3A_2}{2}, \quad \mbox{with} \quad \hat h(\rho)=\rho^{\frac{1}{2}}-6A_1\rho^{\frac{9}{2}}.
\]
We have that $\hat h(\rho)$ is increasing on $[0,\bar{\rho})$ and decreasing on $(\bar{\rho},+\infty)$, with the point $\rho$ being $\bar{\rho}=\left(\frac{1}{54A_1}\right)^{\frac{1}{4}}$.
We get
\[
\max_{\rho\geq0}\hat h(\rho)=\hat h(\bar{\rho})
=\frac{8}{9}\left(\frac{1}{54A_1}\right)^{\frac{1}{8}}
>\frac{3A_2}{2}
\]
if and only if
\begin{equation*}
\begin{aligned}
\max\{a_1,a_2\}<D_0:=\frac{4C_{\Sob}^{\frac{1}{4}}\|W\|^{\frac{2}{3}}_2}{3^{\frac{5}{3}} \alpha^{\frac{2}{3}}}.
\end{aligned}
\end{equation*}
As $\lim\limits_{s\to 0^+}\hat h(s)=0^+$ and $\lim\limits_{s\to +\infty}\hat h(s)=-\infty$, we see  that $h$ has exactly two critical points if $\max\{a_1,a_2\}<D_0$.
\vskip1mm
\noindent Note that
\[
h(\rho)>0 \Longleftrightarrow \tilde h(\rho)>A_2\quad \hbox{ with }\quad  \tilde h(\rho)=\frac{1}{2}\rho^{\frac{1}{2}}-A_1\rho^{\frac{9}{2}}.
\]
It is not difficult to check that $ \tilde h(\rho)$ is increasing on $[0,\rho_0)$ and decreasing on $(\rho_0,+\infty)$, where
\begin{equation*}\label{radial}
\rho_0=\left(\frac{1}{18A_1}\right)^{\frac{1}{4}}.
\end{equation*}
We have
\[
\max_{\rho\geq0} \tilde h(\rho)= \tilde h(\rho_0)
=\frac{4}{9}\left(\frac{1}{18A_1}\right)^{\frac{1}{8}}
>A_2
\] provided
\begin{equation*}
\begin{aligned}
\max\{a_1,a_2\}<D: =\frac{2^{\frac{2}{3}}C_{\Sob}^{\frac{1}{4}}\|W\|^{\frac{2}{3}}_2}{3^{\frac{11}{12}}\alpha^{\frac{2}{3}}}.
\end{aligned}
\end{equation*}
We also  have that $h(\rho)>0$ on an open interval $(R_0,R_1)$ if and only if $\max\{a_1,a_2\}<D$.
By direct calculations, we get that $D<D_0$.
\vskip1mm

\begin{equation*}
\begin{aligned}
\tilde h\left(\frac{\max\{a_1,a_2\}}{D}\rho_0\right)
&=\frac{1}{2}\left(\frac{\max\{a_1,a_2\}}{D}\right)^{\frac{1}{2}}\rho^{\frac{1}{2}}_0
-A_1\left(\frac{\max\{a_1,a_2\}}{D}\right)^{\frac{9}{2}}\rho^{\frac{9}{2}}_0\\
&>\frac{4}{9}\left(\frac{\max\{a_1,a_2\}}{D}\right)^{\frac{1}{2}}\left(\frac{1}{18A_1}\right)^{\frac{1}{8}}
>A_2.
\end{aligned}
\end{equation*}
\vskip1mm

\noindent \textup{(ii)} Similarly to the proof of \textup{(i)}, we have
\begin{equation*}
R_0=\rho_{0}=R_1,\quad \tilde h(\rho_{0})=A_2,\quad \hat h(\bar{\rho})>\frac{3A_2}{2}.
\end{equation*}
\end{proof}

In what follows, we study the structure of the manifold
\begin{equation}\label{def:bar-p}
\bar{\mathcal{P}}_{a_1,a_2}:=\mathcal{P}_{a_1,a_2}\cap \mathcal{M}.
\end{equation}
We will observe that a critical point for the energy  functional $E$ on $\bar{\mathcal{P}}_{a_1,a_2}$ is a critical point for the same functional  on $S(a_1,a_2)$. Therefore, $\bar{\mathcal{P}}_{a_1,a_2}$ is a natural constraint.
\begin{lemma}\label{lem2.2}
Let $\alpha, a_1, a_2>0$. If $\max\{a_1,a_2\}\leq D$, then $\mathcal{P}^0_{a_1,a_2}=\emptyset$, and the set $\bar{\mathcal{P}}_{a_1,a_2}$ is a $ C^1$-submanifold of codimension 1 in $S(a_1,a_2)$.
\end{lemma}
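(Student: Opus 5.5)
We argue by contradiction for the first claim, following the scheme of Soave and of Wei--Wu. Suppose ${\bf u}\in\mathcal{P}^0_{a_1,a_2}$, and write $T:=\|\nabla{\bf u}\|_2^2$, $B:=\|{\bf u}\|_6^6$ and $G:=\alpha\mathrm{Re}\int u_1u_2\overline u_3$. From \eqref{eq:fiber},
\[
\Psi_{\bf u}'(1)=T-B-\tfrac32 G=0,\qquad \Psi_{\bf u}''(1)=T-5B-\tfrac34 G=0 .
\]
Eliminating $G$ gives $T=9B$, and then $G=\tfrac{16}{3}B=\tfrac{16}{27}T$, so $G>0$. If ${\bf u}$ is also required to lie in $\mathcal M$, this is consistent and still has to be excluded.

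Substituting into $E$ gives
\[
E({\bf u})=\tfrac12T-\tfrac16B-G=\tfrac12T-\tfrac1{54}T-\tfrac{16}{27}T=-\tfrac19T<0 .
\]
Now I would exploit the two relations separately through the inequalities \eqref{eq:z5}--\eqref{eq:z4}. Set $\rho=\|{\bf u}\|_{\dot{\bf H}^1}=T^{1/2}$. The relation $T=9B\le 54A_1\rho^6$ gives $\rho^4\ge 1/(54A_1)$. The relation $\tfrac{16}{27}\rho^2=G\le A_2\rho^{3/2}$ gives $\rho^{1/2}\le \tfrac{27}{16}A_2$. Combining,
\[
\Big(\frac1{54A_1}\Big)^{1/8}\le \rho^{1/2}\le \frac{27}{16}A_2 .
\]
On the other hand, the proof of Lemma~\ref{lem2.11} shows that $\max\{a_1,a_2\}\le D$ is equivalent to $A_2\le \tfrac49(18A_1)^{-1/8}$. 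Hence it suffices to check the constant inequality
\[
\frac{27}{16}\cdot\frac49\,(18A_1)^{-1/8}<(54A_1)^{-1/8},
\qquad\text{i.e.}\qquad \frac34\,3^{1/8}<1 .
\]
This holds, since $3^{1/8}\approx1.147$. This gives the contradiction, so $\mathcal P^0_{a_1,a_2}=\emptyset$. Checking these constants carefully is the main computational point. An alternative route is to use $h(\rho)\le E({\bf u})<0$ together with $\rho$ lying between the critical points of $h$, but the direct constant comparison seems cleanest.

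For the manifold structure, note that $\bar{\mathcal P}_{a_1,a_2}$ is the intersection of the open set $\mathcal M$ with
\[
\{{\bf u}\in{\bf H}^1:\ Q_1({\bf u})=a_1^2,\ Q_2({\bf u})=a_2^2,\ P({\bf u})=0\}.
\]
The maps $Q_1,Q_2,P$ are $C^1$ on ${\bf H}^1$; the $L^6$ and cubic terms are $C^1$ by Sobolev. It therefore suffices to show that, at every point of $\bar{\mathcal P}_{a_1,a_2}$, the map
\[
(dQ_1,dQ_2,dP):{\bf H}^1\to\R^3
\]
is surjective. Surjectivity of $(dQ_1,dQ_2)$ alone follows by testing with $(u_1,0,0)$ and $(0,u_2,0)$. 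These are nonzero since $\|u_1\|_2^2\ge a_1^2-\|u_3\|_2^2$; if necessary, also use $(0,0,u_3)$, and the fact that on $\mathcal M$ all three components are nonzero.

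Suppose surjectivity fails. Then there are $(\mu_1,\mu_2,\nu)\ne0$ with $\nu\, dP({\bf u})=\mu_1dQ_1+\mu_2dQ_2$, and necessarily $\nu\neq0$. Then ${\bf u}$ solves an elliptic system of the form
\[
-2\Delta u_j-6|u_j|^4u_j-\tfrac32\alpha(\text{coupling})+\lambda_ju_j=0,
\]
with $\lambda_3=\lambda_1+\lambda_2$. Its Pohozaev identity, obtained as in Lemma~\ref{lem2.1} via the scaling, reads
\[
2T-6B-\tfrac94G=0,
\]
that is, $\Psi_{\bf u}''(1)=0$. Equivalently, differentiating $s\mapsto P(s\star{\bf u})$ at $s=1$, which is tangent to $Q_1,Q_2$, yields $dP({\bf u})[\tfrac{d}{ds}s\star{\bf u}]=\Psi_{\bf u}''(1)\neq0$. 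This is the cleaner argument: the scaling direction $\tfrac{d}{ds}\big|_{s=1}s\star{\bf u}$ lies in $\ker dQ_1\cap\ker dQ_2$ but not in $\ker dP$, since ${\bf u}\notin\mathcal P^0_{a_1,a_2}$. That direction is in ${\bf H}^1$ only formally, so I would justify it by a density/approximation argument, or by using the Lagrange-multiplier Pohozaev route above. Either way the codimension-one $C^1$ structure follows.
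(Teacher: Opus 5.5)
Your proposal is correct. For $\mathcal{P}^0_{a_1,a_2}=\emptyset$ it is the paper's argument: the paper also derives $\|\nabla{\bf u}\|_2^2=9\|{\bf u}\|_6^6$ and $\alpha\mathrm{Re}\int u_1u_2\overline{u}_3=\frac{16}{27}\|\nabla{\bf u}\|_2^2$, then plays the Sobolev bound against the Gagliardo--Nirenberg bound, and your check $\frac34\,3^{1/8}<1$ is exactly the paper's inequality $D<D_0$ raised to the power $\frac32$. The paper omits the manifold part entirely, and your Lagrange-multiplier plus Pohozaev argument is the standard way to supply it; you were right to treat the scaling-direction shortcut as needing a regularity justification.
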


\begin{proof}
It is sufficient to prove that $\mathcal{P}^0_{a_1,a_2}$ is empty. Indeed, if $\mathcal{P}^0_{a_1,a_2}=\emptyset$, we show that $\bar{\mathcal{P}}_{a_1,a_2}$ is a $C^1$-submanifold of codimension 1 in $S(a_1,a_2)$.
Assume by contradiction that there exists a ${\bf u} \in \mathcal{P}^0_{a_1,a_2}$ such that $P({\bf u})=0$, thus
\begin{equation*}
\Psi''_{{\bf u}}(1)=\|{\bf u}\|_{{\bf {\dot H}}^1}^2-6\|{\bf u}\|_{6}^6
-\frac{9}{4}\alpha\mathrm{Re}\int u_1u_2\overline{u}_3=0.
\end{equation*}
Let
\begin{equation*}
\begin{aligned}
f(s):&=s\Psi'_{{\bf u}}(1)-\Psi''_{{\bf u}}(1)\\
&=(s-2)\|{\bf u}\|_{{\bf {\dot H}}^1}^2-(s-6)\|{\bf u}\|_{6}^6-\frac{3}{2}\left(s-\frac{3}{2}\right)\alpha\mathrm{Re}\int u_1u_2\overline{u}_3,\\
\end{aligned}
\end{equation*}
and observe that $f(s)=0$, $\forall s\in \R$. Therefore, it follows from $f\left(\frac{3}{2}\right)=0$ that
\begin{equation}\label{eq:z6}
\|{\bf u}\|_{{\bf {\dot H}}^1}^2=9\|{\bf u}\|_{6}^6.
\end{equation}
By \eqref{eq:z5} and $\eqref{eq:z6}$, we have
\begin{equation*}
\|{\bf u}\|_{{\bf {\dot H}}^1}^4\ge \frac{C_{\Sob}^3}{9}.
\end{equation*}
Since $f(6)=0$, we get
\begin{equation*}
\begin{aligned}
4=\frac{27\alpha}{4\|{\bf u}\|_{{\bf {\dot H}}^1}^2}\mathrm{Re}\int u_1u_2\overline{u}_3\leq \frac{3^{\frac{5}{2}}\alpha }{2C_{\Sob}^{\frac{3}{8}}\|W\|_2 }\max\{a_1,a_2\}^{\frac{3}{2}},
\end{aligned}
\end{equation*}
which is a contradiction with respect  to the hypothesis $\max\{a_1,a_2\}\leq D<D_0$.\vskip2mm
\noindent  We omit the proof that $\bar{\mathcal{P}}_{a_1,a_2}$ is a smooth manifold of codimension 1 in $S(a_1,a_2)$.
\end{proof}

\begin{lemma}\label{lem2.3}
Let $\alpha, a_1, a_2>0$. If $\max\{a_1,a_2\}< D$, for ${\bf u}\in S(a_1,a_2)\cap \mathcal{M}$, then the function $\Psi_{{\bf u}}(s)$ has exactly two critical points $s_{{\bf u}}<\sigma_{{\bf u}}\in\R$ and two zeros $c_{{\bf u}}<d_{{\bf u}}$ with $s_{{\bf u}}<c_{{\bf u}}<\sigma_{{\bf u}}<d_{{\bf u}}$. Moreover, we have the properties below:\vskip2mm

\noindent \textup{(i)}  $s_{{\bf u}}\star{\bf u}\in\mathcal{P}^{+}_{a_1,a_2}$ and $\sigma_{{\bf u}}\star{\bf u}\in\mathcal{P}^{-}_{a_1,a_2}$. Moreover,  if $s\star{\bf u}\in \mathcal{P}_{a_1,a_2}$, then either $s=s_{{\bf u}}$   or $s=\sigma_{{\bf u}}$,\vskip2mm

\noindent \textup{(ii)}  $s_{{\bf u}}<R_0\|{\bf u}\|_{{\bf {\dot H}}^1}^{-1}$ and
\begin{equation*}
\Psi_{{\bf u}}(s_{{\bf u}})=\inf\left\{\Psi_{{\bf u}}(s):s\in\left(0,R_0\|{\bf u}\|_{{\bf {\dot H}}^1}^{-1}\right)\right\}<0,
\end{equation*}\vskip2mm

\noindent \textup{(iii)}  $E\left(\sigma_{{\bf u}}\star{\bf u}\right)=\max\limits_{s\in\R}E\left(s\star{\bf u}\right)>0$,\vskip2mm

\noindent\textup{(iv)} The maps ${\bf u} \mapsto s_{{\bf u}} \in \mathbb{R}$ and ${\bf u} \mapsto \sigma_{{\bf u}} \in \mathbb{R}$ are of class $ C^1$.
\end{lemma}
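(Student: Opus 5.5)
The plan is to reduce everything to the one–variable function $\Psi_{\bf u}(s)=\frac{s^2}{2}a-\frac{s^6}{6}b-s^{3/2}c$, where
\[
a=\|{\bf u}\|_{{\bf \dot H}^1}^2,\qquad b=\|{\bf u}\|_6^6,\qquad c=\alpha\mathrm{Re}\int u_1u_2\overline u_3 .
\]
Since ${\bf u}\in\mathcal M$ we have $c>0$, and $a,b>0$. The main tool is the comparison $\Psi_{\bf u}(s)=E(s\star{\bf u})\ge h(s\|{\bf u}\|_{{\bf\dot H}^1})$ with $h$ from \eqref{def:func-h}. This holds because $s\star{\bf u}\in S(a_1,a_2)$ and $\|s\star{\bf u}\|_{{\bf \dot H}^1}=s\|{\bf u}\|_{{\bf\dot H}^1}$.

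\textbf{Step 1: number of critical points.} Write $s\Psi'_{\bf u}(s)=as^2-bs^6-\tfrac32 cs^{3/2}$. Then $\Psi'_{\bf u}(s)=0$ is equivalent to $g(s):=as^{1/2}-bs^{9/2}=\tfrac32 c$. The function $g$ is strictly increasing and then strictly decreasing, with $g(0^+)=0$ and $g(+\infty)=-\infty$. Hence there are at most two critical points.

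To show there are exactly two, I use Lemma \ref{lem2.11}(i). Since $h>0$ on $(R_0,R_1)$, we get $\Psi_{\bf u}>0$ on $\bigl(R_0\|{\bf u}\|^{-1}_{{\bf\dot H}^1},R_1\|{\bf u}\|^{-1}_{{\bf\dot H}^1}\bigr)$. On the other hand $\Psi_{\bf u}(0^+)=0^-$, because the $-s^{3/2}c$ term dominates near $0$ (this uses $c>0$), and $\Psi_{\bf u}(s)\to-\infty$ as $s\to\infty$. Therefore $\Psi_{\bf u}$ has an interior negative local minimum at some $s_{\bf u}<R_0\|{\bf u}\|^{-1}_{{\bf\dot H}^1}$ and a global maximum at some $\sigma_{\bf u}$ where it is positive, so there are exactly two critical points. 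The same sign pattern (negative near $0$, positive in the middle, $-\infty$ at infinity), together with the monotonicity between critical points, gives exactly two zeros $c_{\bf u}\in(s_{\bf u},\sigma_{\bf u})$ and $d_{\bf u}>\sigma_{\bf u}$.

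\textbf{Step 2: properties (i)--(iii).} Property (ii) comes directly from Step 1. The minimum $s_{\bf u}$ lies in $(0,R_0\|{\bf u}\|^{-1}_{{\bf\dot H}^1})$ because $\Psi_{\bf u}$ is negative near $0$ and positive just beyond $R_0\|{\bf u}\|^{-1}_{{\bf\dot H}^1}$. Since it is the unique critical point there, it is the infimum on that interval, and the value is negative.

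Property (iii) holds because $\sigma_{\bf u}$ is the global maximum point and $\Psi_{\bf u}(\sigma_{\bf u})>0$.

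For (i), use the identity $P(s\star{\bf u})=s\Psi'_{\bf u}(s)$. This shows that $s\star{\bf u}\in\mathcal P_{a_1,a_2}$ exactly when $s\in\{s_{\bf u},\sigma_{\bf u}\}$. Next compute $\Psi_{s\star{\bf u}}''(1)=s^2\Psi_{\bf u}''(s)$. We have $\Psi''_{\bf u}(s_{\bf u})\ge0$ and $\Psi''_{\bf u}(\sigma_{\bf u})\le0$. Equality is excluded because $\mathcal P^0_{a_1,a_2}=\emptyset$ by Lemma \ref{lem2.2}, applied to $s\star{\bf u}$, which again lies in $\mathcal M$ and in $S(a_1,a_2)$. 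This gives $s_{\bf u}\star{\bf u}\in\mathcal P^+_{a_1,a_2}$ and $\sigma_{\bf u}\star{\bf u}\in\mathcal P^-_{a_1,a_2}$.

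\textbf{Step 3: regularity (iv).} Apply the implicit function theorem to $\Phi({\bf u},s):=\Psi'_{\bf u}(s)$. This map is $C^1$ on ${\bf H}^1\times(0,\infty)$, since $a,b,c$ are $C^1$ functionals of ${\bf u}$. Moreover $\partial_s\Phi=\Psi''_{\bf u}(s)\neq0$ at $s_{\bf u}$ and at $\sigma_{\bf u}$ by Step 2. The set $\mathcal M$ is open, so the construction is locally uniform, and uniqueness of the two critical points identifies the implicit branches with $s_{\bf u}$ and $\sigma_{\bf u}$.

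I expect the only delicate point to be the nondegeneracy used in Steps 2 and 3, that is, $\Psi''_{\bf u}\neq0$ at the critical points. This rests on Lemma \ref{lem2.2}, so I must check that its hypothesis $\max\{a_1,a_2\}\le D$ is covered. It is, since we assume $\max\{a_1,a_2\}<D$.
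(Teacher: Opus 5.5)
Your proposal is correct and follows essentially the route the paper takes. The paper only defers to \cite[Lemma 2.4]{FLY}, which is the standard Soave-type fiber-map argument: the lower bound $\Psi_{\bf u}(s)\ge h(s\|{\bf u}\|_{{\bf \dot H}^1})$, at most two critical points from the shape of $as^{1/2}-bs^{9/2}$, nondegeneracy from $\mathcal P^0_{a_1,a_2}=\emptyset$ (Lemma \ref{lem2.2}), and the implicit function theorem for (iv). As a minor aside, nondegeneracy also follows directly from your one-variable analysis: at a critical point $\Psi''_{\bf u}(s)=s^{1/2}g'(s)$, and this is nonzero because the two roots of $g=\tfrac32 c$ are distinct.
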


\begin{proof}
The proof follows the same lines of  \cite[Lemma 2.4]{FLY} by the authors, with obvious modifications.
\end{proof}

\section{Proof of Theorem \ref{th1.1}}

In this section, we give a  proof of  Theorem \ref{th1.1}. We first prove several results eventually leading to the conclusions of the Theorem. \\

First of all, we define the ball in ${\bf H}^1$ 
\begin{equation*}
B_{\rho_0}:=\left\{{\bf u}\in {\bf H}^1 \ \hbox{ s.t. } \ \|{\bf u}\|_{{\bf {\dot H}}^1}<\rho_0\right \}
\end{equation*}
and its subset given by 
\begin{equation}\label{def:V}
V(a_1,a_2):=S(a_1,a_2)\cap B_{\rho_0}\cap \mathcal{M},
\end{equation}
where $\mathcal{M}$ is defined in \eqref{eq1.2}.\\

We aim at minimizing the energy functional $E$ over the set introduced in \eqref{def:V}, provided that the two positive parameters  $a_1$ and $a_2$ are such that $\max\{a_1,a_2\}< D$. We then define  
\begin{equation}\label{def:m12}
m(a_1,a_2) := \inf_{{\bf u} \in V(a_1,a_2)} E({\bf u}).
\end{equation}

First, we claim that the minimization problem \eqref{def:m12} is equivalent to the minimization of the energy functional over different manifolds, and that the infimum is strictly negative. Recall the definition of $\mathcal{P}^+_{a_1,a_2}$ and $\bar{\mathcal{P}}_{a_1,a_2}$ in \eqref{eq:c41} and \eqref{def:bar-p}, respectively.

\begin{lemma}\label{lem2.4}
Let $\alpha, a_1, a_2>0$. If $\max\{a_1,a_2\}< D$, the set $\mathcal{P}^+_{a_1,a_2}$ is contained in $V(a_1,a_2)$ and
\begin{equation}
m(a_1,a_2)=m^+(a_1,a_2):=\inf_{{\bf u}\in\mathcal{P}^+_{a_1,a_2}\cap \mathcal{M}}E({\bf u})=\inf_{{\bf u}\in\bar{\mathcal{P}}_{a_1,a_2}}E({\bf u})<0.
\end{equation}
Moreover, there exists $\varepsilon_0>0$ such that for any $0<\varepsilon<\varepsilon_0$
\begin{equation*}
m(a_1,a_2)<\inf_{S(a_1,a_2)\cap (B_{\rho_0}\setminus B_{\rho_0-\varepsilon})} E({\bf u}).
\end{equation*}
\end{lemma}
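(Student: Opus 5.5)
Throughout, write $\rho:=\|{\bf u}\|_{{\bf \dot H}^1}$. The plan is to compare the fiber map $\Psi_{\bf u}$ with the function $h$ of Lemma \ref{lem2.11}. The basic fact is that $E({\bf u})\ge h(\rho)$ for every ${\bf u}\in S(a_1,a_2)$. The proof then has five steps.

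\textbf{Step 1: $\mathcal P^+_{a_1,a_2}\cap\mathcal M\subset V(a_1,a_2)$.} Take ${\bf u}\in \mathcal P^+_{a_1,a_2}\cap\mathcal M$ (presumably this is what is meant, since $V\subset\mathcal M$). By Lemma \ref{lem2.3}(i), $1=s_{\bf u}$. By Lemma \ref{lem2.3}(ii), $s_{\bf u}<R_0\rho^{-1}$, hence $\rho<R_0<\rho_0$. So ${\bf u}\in B_{\rho_0}$, and therefore ${\bf u}\in V(a_1,a_2)$. This gives
\[
m(a_1,a_2)\le m^+(a_1,a_2).
\]

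\textbf{Step 2: reverse inequality $m\ge m^+$.} Take ${\bf u}\in V(a_1,a_2)$. I claim $s_{\bf u}\star{\bf u}\in\mathcal P^+\cap\mathcal M$; membership in $\mathcal M$ holds because $\mathrm{Re}\int u_1u_2\bar u_3$ only gets multiplied by $s^{3/2}>0$. I also claim $E(s_{\bf u}\star{\bf u})\le E({\bf u})$. If $\rho\le R_0$, then $1\in(0,R_0\rho^{-1}]$, and Lemma \ref{lem2.3}(ii) (extended by continuity to the closed endpoint) gives $\Psi_{\bf u}(s_{\bf u})\le\Psi_{\bf u}(1)$. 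If instead $R_0<\rho<\rho_0$, I use the ordering $s_{\bf u}<c_{\bf u}<\sigma_{\bf u}<d_{\bf u}$ and show $1\in(s_{\bf u},\sigma_{\bf u})$:
\begin{itemize}
\item $\sigma_{\bf u}\star{\bf u}\in\mathcal P^-$. At such a point $\Psi''(1)<0$, which combined with $P=0$ forces $\|\nabla\cdot\|_2\ge$ something like $\rho_0$. The plan is to check, as in \cite{FLY}, that points of $\mathcal P^-$ satisfy $\|\cdot\|_{\dot H^1}\ge\rho_0$. This follows from $P=0$ and $\Psi''<0$, which give $\|{\bf u}\|_{\dot H^1}^2<\tfrac92\|{\bf u}\|_6^6$, together with \eqref{eq:z5}. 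Then $\sigma_{\bf u}\rho\ge\rho_0>\rho$, so $\sigma_{\bf u}>1$.
\item On $(s_{\bf u},\sigma_{\bf u})$ the fiber map is increasing, so $\Psi_{\bf u}(1)>\Psi_{\bf u}(s_{\bf u})$.
\end{itemize}
In both cases $E({\bf u})\ge m^+$. Hence $m=m^+$. The equality with $\inf_{\bar{\mathcal P}}E$ follows because $\mathcal P^-$ energies are positive by Lemma \ref{lem2.3}(iii), while $\mathcal P^+$ energies are negative.

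\textbf{Step 3: $m<0$.} Fix any ${\bf u}\in S\cap\mathcal M$, for instance real positive functions with the right masses. Then $\Psi_{\bf u}(s)\sim -s^{3/2}\alpha\,\mathrm{Re}\int u_1u_2\bar u_3<0$ for small $s$, and $s\star{\bf u}\in V$ for small $s$. Hence $m<0$.

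\textbf{Step 4: strict gap near the sphere $\rho=\rho_0$.} Since $R_0<\rho_0<R_1$, we have $h(\rho_0)>0$. By continuity of $h$ there is $\varepsilon_0$ such that $h(\rho)\ge h(\rho_0)/2>0$ for $\rho\in[\rho_0-\varepsilon_0,\rho_0]$. For ${\bf u}\in S$ with $\rho_0-\varepsilon<\rho<\rho_0$,
\[
E({\bf u})\ge h(\rho)\ge \tfrac12 h(\rho_0)>0>m .
\]
Taking the infimum over this shell gives a value $\ge h(\rho_0)/2>m$, which is the strict inequality.

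\textbf{Main obstacle.} The expected difficulty is Step 2 in the case $\rho\in(R_0,\rho_0)$, specifically the bound $\sigma_{\bf u}\rho\ge\rho_0$. To get it I will combine $\Psi''_{\bf u}(1)<0$ with $P=0$ to eliminate the cubic term, yielding $\|\nabla{\bf u}\|_2^2<\tfrac92\|{\bf u}\|_6^6\le 27A_1\rho^6$. This gives $\rho^4>1/(27A_1)$. It remains to compare this with $\rho_0^4=1/(18A_1)$. If the constant does not match, the fallback is the following: for $\rho<\rho_0$ one has $h>0$ on $(R_0,\rho)$, so $\Psi_{\bf u}(s)\ge h(s\rho)>0$ there. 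Hence $c_{\bf u}\le R_0/\rho<1$, and the zero $d_{\bf u}$ satisfies $d_{\bf u}\ge R_1/\rho>1$. Then $1\in(c_{\bf u},d_{\bf u})$ with $\Psi_{\bf u}(1)>0>\Psi_{\bf u}(s_{\bf u})$, which gives the needed inequality directly.
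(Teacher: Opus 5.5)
Your proposal is correct and is essentially the standard argument the paper delegates to \cite[Lemma 3.2]{FLY}. It uses $\mathcal P^+_{a_1,a_2}\subset B_{R_0}$ via Lemma \ref{lem2.3}(ii), the bound $E({\bf u})\ge h(\|{\bf u}\|_{\dot{\bf H}^1})$ for ${\bf u}\in V(a_1,a_2)$ with $R_0\le\|{\bf u}\|_{\dot{\bf H}^1}<\rho_0$, and $h(\rho_0)>0$ for the shell estimate. Drop the primary argument in Step 2 and keep your fallback: on $\mathcal P_{a_1,a_2}$ the condition $\Psi''_{\bf u}(1)<0$ reads $\|\nabla{\bf u}\|_2^2<9\|{\bf u}\|_6^6$ (not $\tfrac92$), and with \eqref{eq:z5} alone this only gives $\rho^4>1/(54A_1)$, which falls short of $\rho_0^4=1/(18A_1)$. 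The fallback itself reduces to the one-line bound $E({\bf u})\ge h(\rho)\ge 0>m^+(a_1,a_2)$ for $R_0\le\rho<\rho_0$.
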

\begin{proof}
The Lemma can be proved along the
same lines of  \cite[Lemma 3.2]{FLY} by the authors, with obvious modifications.
\end{proof}
 
We now introduce some other notions and tools. Let ${\bf u}$ belong to ${\bf H}^1$, and let us use the short notation $|{\bf u}|$ standing for  $|{\bf u}|=(|u_1|,|u_2|,|u_3|)$. Firstly, we have  $E(|{\bf u}|)\leq E({\bf u})$. Moreover, by  symmetric rearrangement, see \cite{BF,LE}, we also claim that 
\[
\|\nabla |u_j|^*\|_2\leq\|\nabla |u_j|\|_2\leq\|\nabla u_j\|_2, \quad \||u_j|^*\|_p=\|u_j\|_p, \]
and
\[
\int |u_1||u_2||u_3|\leq\int |u_1|^*|u_2|^*|u_3|^*,
\]
where $|u_j|^*$ is the Schwarz  symmetric rearrangement of $|u_j|$, for $j\in\{1,2,3\}$, and $|{\bf u}|^*=(|u_1|^*,|u_2|^*,|u_3|^*)$. Then $E(|{\bf u}|^*)\leq E(|{\bf u}|)\leq E({\bf u})$.  
Let us now consider  ${\bf v}\in {\bf H}^1$ a solution to the  system \eqref{eqA0.2} with $\lambda_3=\lambda_1+\lambda_2$.  Precisely, ${\bf v}=(v_1,v_2,v_3)$ solves the system 
\begin{equation}\label{eq:b4}
\begin{cases}
-\Delta v_{1}+ \lambda_1v_{1}=\left|v_{1}\right|^{4} v_{1}+\alpha v_{3} v_{2},\\
-\Delta v_{2}+ \lambda_2v_{2}=\left|v_{2}\right|^{4} v_{2}+\alpha v_{3} v_{1}, \\
-\Delta v_{3}+ (\lambda_1+\lambda_2)v_{3}=\left|v_{3}\right|^{4} v_{3}+\alpha v_{1} v_{2}.
\end{cases}
\end{equation}
With ${\bf H}^1_{\rad}$ standing for the subspace of functions in ${\bf H}^1$ which are radially symmetric component-wise, we introduce the manifolds
\[
\mathcal{P}_{\rad,a_1,a_2}:=\left\{{\bf v}\in {\bf H}^1_{\rad}\cap S(a_1,a_2) \ \hbox{ s.t. } \ \mathcal P({\bf v})=0\right\},
\]
and
\begin{equation*}\label{def:P-rad-pm}
\begin{aligned}
\mathcal{P}^{+}_{\rad,a_1,a_2}:={\bf H}^1_{\rad}\cap \mathcal{P}^{+}_{a_1,a_2},\\ \mathcal{P}^{-}_{\rad,a_1,a_2}:={\bf H}^1_{\rad}\cap \mathcal{P}^{-}_{a_1,a_2}.
\end{aligned}
\end{equation*}
Subsequently, we introduce the minimization problems 
\begin{equation*}\label{y2}
\begin{aligned}
m^{+}_{r}(a_1,a_2):=\inf\limits_{{\bf u}\in \mathcal{P}^{+}_{\rad,a_1,a_2}\cap \mathcal{M}}E({\bf u}), \\ 
m^{-}_{r}(a_1,a_2):=\inf\limits_{{\bf u}\in \mathcal{P}^{-}_{\rad,a_1,a_2}\cap \mathcal{M}}E({\bf u}).
\end{aligned}
\end{equation*}

With these tools at hand, we can state the following.

\begin{lemma}\label{lem2.5}
Let $\alpha,a_1, a_2>0$. If $\max\{a_1,a_2\}<D$, then
\begin{equation*}
m^{+}_{r}(a_1,a_2)=\inf_{{\bf u}\in \mathcal{P}^{+}_{\rad,a_1,a_2}\cap \mathcal{M}} E({\bf u})=\inf_{{\bf u}\in\mathcal{P}^{+}_{a_1,a_2}\cap \mathcal{M}} E({\bf u}),
\end{equation*}
and
\begin{equation*}
m^{-}_{r}(a_1,a_2)=\inf_{{\bf u}\in \mathcal{P}^{-}_{\rad,a_1,a_2}\cap \mathcal{M}} E({\bf u})=\inf_{{\bf u}\in\mathcal{P}^{-}_{a_1,a_2}\cap \mathcal{M}} E({\bf u}).
\end{equation*} 
Furthermore, $\inf\limits_{\mathcal{P}^{+}_{a_1,a_2}\cap \mathcal{M}} E$ is reached by a vector function $(e^{i\theta_1}w_1, e^{i\theta_1}w_2, e^{i(\theta_1+\theta_2)}w_3 )$ where ${\bf w}=(w_1,w_2,w_3)$ is a minimizer for $\inf\limits_{\mathcal{P}^{+}_{\rad,a_1,a_2}} E$, and $(\theta_1,\theta_2)$ are two real parameters. Similarly,  $\inf\limits_{\mathcal{P}^{-}_{a_1,a_2}\cap \mathcal{M}} E$ is reached by a vector function $(e^{i\theta_1}w_1, e^{i\theta_1}w_2, e^{i(\theta_1+\theta_2)}w_3 )$ where $\tilde{\bf w}=(\tilde w_1, \tilde w_2,\tilde w_3)$ is a minimizer for $\inf\limits_{\mathcal{P}^{-}_{\rad,a_1,a_2}} E$,  and $(\tilde \theta_1,\tilde \theta_2)$ are two real parameters.
\end{lemma}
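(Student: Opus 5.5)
The inequality ``$\le$'' in both identities is immediate, because $\mathcal{P}^{\pm}_{\rad,a_1,a_2}\subset\mathcal{P}^{\pm}_{a_1,a_2}$. For the reverse inequality I will use symmetrization combined with the fiber maps of Lemma \ref{lem2.3}.

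Fix ${\bf u}\in\mathcal{P}^{+}_{a_1,a_2}\cap\mathcal{M}$ and set ${\bf w}:=|{\bf u}|^*$. Rearrangement keeps every $L^2$ norm, so ${\bf w}\in S(a_1,a_2)$. It also gives
\[
\int w_1w_2w_3\ge\int|u_1||u_2||u_3|\ge\mathrm{Re}\int u_1u_2\overline u_3>0,
\]
hence ${\bf w}\in\mathcal{M}\cap{\bf H}^1_{\rad}$. In addition $\|\nabla{\bf w}\|_2\le\|\nabla{\bf u}\|_2$ and $\|{\bf w}\|_6=\|{\bf u}\|_6$. Comparing the fiber maps \eqref{eq:fiber}, these facts give $\Psi_{\bf w}(s)\le\Psi_{\bf u}(s)$ for every $s>0$.

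By Lemma \ref{lem2.3}, $s_{\bf w}\star{\bf w}\in\mathcal{P}^+_{\rad,a_1,a_2}\cap\mathcal{M}$. The dilation preserves radiality, $\mathcal{M}$ and the mass constraints. Since ${\bf u}\in\mathcal{P}^+$, Lemma \ref{lem2.3} gives $s_{\bf u}=1$, and $\Psi_{\bf u}(1)=\min_{(0,R_0\|{\bf u}\|_{\dot{\bf H}^1}^{-1})}\Psi_{\bf u}$. Here I use the point of Lemma \ref{lem2.3}(ii) that $s_{\bf w}$ minimizes $\Psi_{\bf w}$ on $(0,R_0\|{\bf w}\|_{\dot{\bf H}^1}^{-1})$. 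This interval contains $(0,R_0\|{\bf u}\|_{\dot{\bf H}^1}^{-1})$ because $\|{\bf w}\|_{\dot{\bf H}^1}\le\|{\bf u}\|_{\dot{\bf H}^1}$, and by Lemma \ref{lem2.3}(ii) applied to ${\bf u}$ it contains $s=1$. Therefore
\[
E(s_{\bf w}\star{\bf w})=\Psi_{\bf w}(s_{\bf w})\le\Psi_{\bf w}(1)\le\Psi_{\bf u}(1)=E({\bf u}).
\]
For $\mathcal{P}^-$ the argument is the same but uses the max characterization, Lemma \ref{lem2.3}(iii):
\[
E(\sigma_{\bf w}\star{\bf w})=\max_s\Psi_{\bf w}\le\max_s\Psi_{\bf u}=\Psi_{\bf u}(1)=E({\bf u}),
\]
where $\sigma_{\bf u}=1$. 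Taking infima gives both identities.

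For the statement about minimizers, I take the existence of a radial minimizer ${\bf w}$ as given; it comes from the compactness analysis that follows. Now let ${\bf u}$ be any minimizer on $\mathcal{P}^{\pm}\cap\mathcal{M}$. All the inequalities in the chain above must then be equalities. In particular $\int|u_1||u_2||u_3|=\mathrm{Re}\int u_1u_2\overline u_3$ and $\|\nabla|u_j|\|_2=\|\nabla u_j\|_2$.

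The gradient equality forces $u_j=e^{i\theta_j}|u_j|$ with a constant phase $\theta_j$ on the support, since the standard equality case in the diamagnetic inequality applies to positive solutions. The trilinear equality then forces $\theta_3=\theta_1+\theta_2$ mod $2\pi$. Positivity of $|u_j|$ follows from the maximum principle applied to the Euler--Lagrange system \eqref{eq:b4}, whose coupling terms are nonnegative. Conversely, every vector of the form $(e^{i\theta_1}w_1,e^{i\theta_2}w_2,e^{i(\theta_1+\theta_2)}w_3)$ has the same energy and constraints as ${\bf w}$, so it is a minimizer.

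I expect the main obstacle to be the $\mathcal{P}^+$ case. There one must ensure that the comparison happens on the correct local branch, namely that $s=1$ lies in the interval where $s_{\bf w}$ is the minimizer. This is exactly where the monotonicity $\|\nabla{\bf w}\|_2\le\|\nabla{\bf u}\|_2$ and the bound $s_{\bf u}<R_0\|{\bf u}\|_{\dot{\bf H}^1}^{-1}$ from Lemma \ref{lem2.3}(ii) are needed.
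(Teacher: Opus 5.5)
Your proposal is correct and follows the route the paper has in mind (it omits the proof and defers to \cite[Lemma 3.3]{FLY}): you replace ${\bf u}$ by $|{\bf u}|^*$, which keeps the masses and $\mathcal{M}$ while lowering the fiber map pointwise, and then project back onto $\mathcal{P}^{\pm}$ via $s_{|{\bf u}|^*}$ or $\sigma_{|{\bf u}|^*}$ from Lemma \ref{lem2.3}, using (ii) on the local-minimum branch and (iii) for the maximum. Taking the existence of a radial minimizer as given also matches the paper, where attainment of $m^{\pm}_r$ is proved only afterwards in Lemmas \ref{lem2.6} and \ref{lem3.3}; your converse characterization of all minimizers goes beyond what the lemma asserts.
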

\begin{proof}
    The proof is analogous to that of  \cite[Lemma 3.3]{FLY} by the authors, and we omit it.
\end{proof}

We now give the existence of a ground state solution to \eqref{eqA0.2} along with some of its properties. 

\begin{lemma}\label{lem2.6}
Let $\alpha, a_1, a_2>0$. If $\max\{a_1,a_2\}<D$, then \eqref{eqA0.2} has a ground state solution $(\lambda_1,\lambda_2,u_1,u_2,u_3)$ with $\lambda_1,\lambda_2>0$, and ${\bf u}\in S(a_1,a_2)$ is real valued, positive and radially symmetric. 
\end{lemma}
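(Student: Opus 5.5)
We minimize $E$ over $V(a_1,a_2)$, which is the level $m(a_1,a_2)<0$ of Lemma \ref{lem2.4}, and show that a minimizing sequence is relatively compact. The choice of minimizing sequence comes first. Take any minimizing sequence $({\bf u}_n)\subset V(a_1,a_2)$ and replace it by $|{\bf u}_n|^*$. This keeps the masses and does not increase $\|\nabla\cdot\|_2$, so the sequence stays in $B_{\rho_0}$. It also does not decrease $\int u_1u_2\bar u_3$, so it stays in $\mathcal M$, and $E(|{\bf u}_n|^*)\le E({\bf u}_n)$. Hence we may assume ${\bf u}_n$ is nonnegative and radial.

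By the second part of Lemma \ref{lem2.4}, for large $n$ we have $\|\nabla{\bf u}_n\|_2<\rho_0-\varepsilon$, so the sequence stays away from $\partial B_{\rho_0}$. Since $E({\bf u}_n)\to m<0$ and the $L^6$ term is controlled by $\rho_0$, the coupling integral $\int u_{1,n}u_{2,n}u_{3,n}$ is bounded below by a positive constant. So the sequence also stays away from $\partial\mathcal M$.

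Next we apply Ekeland's principle on the open subset $V$ of $S(a_1,a_2)$. This lets us assume $({\bf u}_n)$ is a Palais--Smale sequence for $E|_{S(a_1,a_2)}$, and, by replacing ${\bf u}_n$ by $s_{{\bf u}_n}\star{\bf u}_n$ using Lemma \ref{lem2.3}, that $P({\bf u}_n)\to0$. The Lagrange multipliers $\lambda_{1,n},\lambda_{2,n}$ are then bounded. Up to subsequences, radial compactness gives ${\bf u}_n\rightharpoonup{\bf u}$ in ${\bf H}^1_{\rad}$ and ${\bf u}_n\to{\bf u}$ in $L^p$ for $2<p<6$. In particular the coupling integral converges, so $\int u_1u_2u_3>0$ and ${\bf u}\neq0$, and $\lambda_{j,n}\to\lambda_j$.

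From the bound of the multipliers, $-\lambda_1 a_1^2-\lambda_2a_2^2$ is expressed through $P$ and $E$. Using $P({\bf u}_n)\to0$ and $m<0$, we get
\[
\lambda_1\|u_1\|_2^2+\lambda_2\|u_2\|_2^2+(\lambda_1+\lambda_2)\|u_3\|_2^2=\tfrac{\alpha}{2}\cdot\tfrac{3}{2}\textstyle\int u_1u_2u_3+\dots
\]
which we expect to give $\lambda_1,\lambda_2>0$. If that fails, we fall back on a Liouville argument: a nonnegative solution of $-\Delta u\ge0$ with $u\in L^2(\mathbb R^3)$ must vanish, and the first two equations force $\lambda_j>0$ since $u_j\not\equiv0$. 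The last point holds because $\int u_1u_2u_3>0$ and $u_3\neq0$.

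We expect the strong convergence in ${\bf H}^1$ to be the main obstacle. Set ${\bf w}_n={\bf u}_n-{\bf u}$. By Brezis--Lieb, and since ${\bf u}$ solves \eqref{eqA0.2} and hence $P({\bf u})=0$, we have $\|\nabla{\bf w}_n\|_2^2-\|{\bf w}_n\|_6^6\to0$. By Sobolev, either $\|\nabla{\bf w}_n\|_2\to0$ or $\liminf\|\nabla{\bf w}_n\|_2^2\ge C_{\Sob}^{3/2}$. The second alternative is excluded because $\|\nabla{\bf w}_n\|_2\le\rho_0+o(1)$ and $\rho_0^2=(1/(18A_1))^{1/2}=(C_{\Sob}^3/3)^{1/2}<C_{\Sob}^{3/2}$. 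So $\nabla{\bf u}_n\to\nabla{\bf u}$ in $L^2$.

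For the $L^2$ convergence, test the equations with ${\bf u}_n-{\bf u}$ and use $\lambda_j>0$. This gives $\lambda_1\|w_{1,n}\|_2^2+\lambda_2\|w_{2,n}\|_2^2+(\lambda_1+\lambda_2)\|w_{3,n}\|_2^2\to0$, so ${\bf u}\in S(a_1,a_2)$.

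Hence ${\bf u}\in V$ and $E({\bf u})=m$. Since $m=\inf_{\bar{\mathcal P}}E$ by Lemma \ref{lem2.4}, and every critical point on $S(a_1,a_2)$ lies in $\mathcal P$ by Lemma \ref{lem2.1}, ${\bf u}$ is a ground state as long as critical points with nonpositive coupling have energy $\ge m$. For such points, $P=0$ gives $E=\frac13\|\nabla{\bf u}\|^2_2+\dots\ge0>m$, so this holds.

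Finally, ${\bf u}$ is nonnegative and radial. Positivity follows from the strong maximum principle applied to $-\Delta u_j+\lambda_ju_j\ge0$ with $u_j\not\equiv0$.
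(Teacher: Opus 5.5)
Your proof is correct, and its skeleton matches the paper's:
\begin{itemize}
\item Schwarz rearrangement and projection onto $\mathcal P^+_{a_1,a_2}$ via $s_{\bf u}$;
\item Ekeland's principle and compactness of the radial embedding;
\item a Liouville-type lemma giving $\lambda_1,\lambda_2>0$ (your fallback is exactly what the paper takes from \cite[Lemma A.2]{IN});
\item the maximum principle for positivity.
\end{itemize}

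The difference is in the compactness step. The paper first proves that the mixed masses are preserved. From $P({\bf u}_n)\to0$ and the multiplier relations it gets $\lambda_1a_1^2+\lambda_2a_2^2=\frac{3\alpha}{2}\int u_1u_2u_3$. By $P({\bf u})=0$, the limit satisfies the same identity with $\|u_j\|_2^2$ in place of the masses. Then $\lambda_1,\lambda_2>0$, together with $\|u_1\|_2^2+\|u_3\|_2^2\le a_1^2$ and $\|u_2\|_2^2+\|u_3\|_2^2\le a_2^2$, forces equality. ${\bf H}^1$ convergence is then stated in one line.

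You instead first exclude a bubble in ${\bf w}_n={\bf u}_n-{\bf u}$ quantitatively, via Brezis--Lieb and $\rho_0^2=C_{\Sob}^{3/2}/\sqrt3<C_{\Sob}^{3/2}$. You then get $L^2$ convergence by testing with ${\bf w}_n$. The paper's identity is the quicker way to mass preservation. Your route, however, makes explicit the genuinely energy-critical issue, namely no loss of gradient compactness, which the paper's final sentence leaves implicit. You also obtain $\int u_1u_2u_3>0$ more cleanly, from the uniform lower bound on the coupling and strong $L^3$ convergence. And you check the ground-state property against critical points outside $\mathcal M$, which the paper does not address.

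Three presentational fixes.

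\emph{Order of the projection and Ekeland.} Make the replacement ${\bf u}_n\mapsto s_{{\bf u}_n}\star{\bf u}_n$ before applying Ekeland, since applying $s\star$ afterwards need not preserve the Palais--Smale property. The condition $P\to0$ then passes to the Ekeland sequence ${\bf v}_n$, because $\|{\bf v}_n-{\bf u}_n\|_{{\bf H}^1}\to0$ and $P$ is Lipschitz on bounded sets.

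\emph{$P({\bf u}_n)\to0$ is dispensable.} Subtract the limit equation tested with ${\bf u}$ from the Palais--Smale relation tested with ${\bf u}_n$. This gives
\[
\|\nabla{\bf w}_n\|_2^2-\|{\bf w}_n\|_6^6+\lambda_1\left(\|w_{1,n}\|_2^2+\|w_{3,n}\|_2^2\right)+\lambda_2\left(\|w_{2,n}\|_2^2+\|w_{3,n}\|_2^2\right)\to0 .
\]
Using $\lambda_1,\lambda_2>0$, your bubble exclusion then yields full ${\bf H}^1$ convergence in one step.

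\emph{The displayed weighted identity.} Drop it: it only gives positivity of a weighted sum of the multipliers. It is the Liouville argument that actually proves $\lambda_1,\lambda_2>0$.
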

\begin{proof}
By Lemma \ref{lem2.5}, it suffices to demonstrate that $m^+_r(a_1,a_2)$ is achieved.
Given that $\displaystyle m^+_r(a_1,a_2)=\inf_{V(a_1,a_2)}E$,  and employing the symmetric decreasing rearrangement, we obtain a minimizing sequence $\{{\bf w}_n\}$ with ${\bf w}_n\in {\bf H}^1_{\rad}\cap V(a_1,a_2)$ which is positive for every $n$. Furthermore, by Lemma \ref{lem2.4}, $E(s_{{\bf w}_n}\star {\bf w}_n)\leq E({\bf w}_n)$ and $s_{{\bf w}_n}\star {\bf w}_n\in V(a_1,a_2)$.
By replacing ${\bf w}_n$ by $s_{{\bf w}_n}\star {\bf w}_n$, we obtain a new minimizing sequence $s_{{\bf w}_n}\star {\bf w}_n\in \mathcal{P}^+_{a_1,a_2}\cap \mathcal{M}$. Hence, by  Ekeland's variational principle,
we can select a non-negative radial Palais-Smale sequence $\{{\bf u}_n\}$ for $E|_{S(a_1,a_2)}$ at the level $m^+_r(a_1,a_2)$ with $P({\bf u}_n)=o_n(1)$ such that $\lim\limits_{n\to \infty} E({\bf u}_n)=m^+_r(a_1,a_2)$ and $E'|_{S(a_1,a_2)}({\bf u}_n)\to 0$ as $n\to \infty$. Since
\begin{equation*}
\begin{aligned}
m^+_r(a_1,a_2)+o_n(1)=E({\bf u}_n)&=\frac{1}{3}\|{\bf u}_n\|_{{\bf {\dot H}}^1}^2-\frac{3}{4}\alpha \int u_{1,n}u_{2,n}u_{3,n},\\
\end{aligned}
\end{equation*}
the sequence $\{{\bf u}_n\}$ is bounded in $H^1_{\rad}(\R^3,\R^3)$. Indeed, since $m^+_r(a_1,a_2)<0$, by the H\"older  and the  Gagliardo-Nirenberg inequalities,
\begin{equation}\label{eq:b7-prev}
\|{\bf u}_n\|_{{\bf {\dot H}}^1}^2\leq\frac{9}{4}\alpha \int u_{1,n}u_{2,n}u_{3,n} \leq\frac{  3^{\frac{5}{4}} \alpha}{2\|W\|_2} \max\{a_1,a_2\}^{\frac{3}{2}}\|{\bf u}_n\|_{{\bf {\dot H}}^1}^{\frac{3}{2}}.
\end{equation}
Hence $\{{\bf u}_n\}$ is bounded in ${\bf H}^1_{\rad}$. Then there exists ${\bf u}=(u_1,u_2,u_3)$ such that ${\bf u}_n\rightharpoonup{\bf u}$ weakly in ${\bf H}^1_{\rad}$, ${\bf u}_n\to{\bf u}$ strongly in ${\bf L}^{r}$ for $r\in (2, 6)$, and a.e. in $\R^3\times \R^3\times \R^3$ as $n\to \infty$. Therefore, $u_j$ are non-negative radial functions for  $j\in\{1,2,3\}$.
\vskip1mm

According to the Lagrange multiplier's rule (refer to \cite[Lemma 3]{BL}), there exists a sequence $\{(\lambda_{1,n},\lambda_{2,n})\}\subset \R\times\R$ such that
\begin{equation}\label{eq:b7}
\begin{cases}
\displaystyle\int\left( \nabla u_{1,n}\nabla\phi_1+\lambda_{1,n}u_{1,n}\phi_1-|u_{1,n}|^{4}u_{1,n}\phi_1-\alpha u_{3,n}u_{2,n}\phi_1\right)=o_n(1)\|\phi_1\|_{H^1},\\
\displaystyle\int \left(\nabla u_{2,n}\nabla\phi_2+\lambda_{2,n}u_{2,n}\phi_2-|u_{2,n}|^{4}u_{2,n}\phi_2-\alpha u_{3,n}u_{1,n}\phi_2\right)=o_n(1)\|\phi_2\|_{H^1},\\
\displaystyle\int \left(\nabla u_{3,n}\nabla\phi_3+(\lambda_{1,n}+\lambda_{2,n})u_{3,n}\phi_3-|u_{3,n}|^{4}u_{3,n}\phi_3-\alpha u_{1,n}u_{2,n}\phi_2\right)=o_n(1)\|\phi_3\|_{H^1},
\end{cases}
\end{equation}
as $n\to \infty$, for every $\phi_j\in H^1,$ $j\in\{1,2,3\}$. In particular, by taking $(\phi_1,\phi_2,\phi_3)=(u_{1,n},u_{2,n},u_{3,n})$, we have  that $(\lambda_{1,n},\lambda_{2,n})$ is bounded, and up to a subsequence $(\lambda_{1,n},\lambda_{2,n})\to (\lambda_{1},\lambda_{2})\in \R^2$. Passing to the limit in \eqref{eq:b7}, we get 
\begin{equation*}
\begin{cases}
-\Delta u_1+\lambda_1u_1=|u_1|^{4}u_1+\alpha u_3u_2,\\
-\Delta u_2+\lambda_2u_2=|u_2|^{4}u_2+\alpha u_3u_1,\\
-\Delta u_3+(\lambda_1+\lambda_2)u_3=|u_3|^{4}u_3+\alpha u_1u_2.
\end{cases}
\end{equation*}
Furthermore, we infer that $\mathrm{Re}\int u_{1}u_2\overline{u}_3>0$. Supposing that this is not true, and by using the Sobolev embedding, we get   
\begin{equation*}
\|{\bf u}\|_{{\bf {\dot H}}^1}^2\leq \|{\bf u}\|_{6}^6 \leq C_{\Sob}^{-3}\|{\bf u}\|_{{\bf {\dot H}}^1}^6,
\end{equation*}
and then $C_{\Sob}^{\frac{3}{2}}\leq\|{\bf u}\|_{{\bf {\dot H}}^1}^2$. Moreover, as $\mathcal{P}^+_{a_1,a_2}\subset V(a_1,a_2)$, we get ${\bf u}\in B_{\rho_0}$, and this is a contradiction. \\
From $P({\bf u})=0$, we conclude that
\begin{equation}\label{eq:d3}
\lambda_1\|u_1\|^2_2+\lambda_2\|u_2\|^2_2+(\lambda_1+\lambda_2)\|u_3\|^2_2=\frac{3\alpha}{2}\int u_1u_2u_3.
\end{equation}
By $P({\bf u}_n)=o_n(1)$, we obtain
\begin{equation}\label{eq:d4}
\begin{aligned}
\lambda_1a^2_1+\lambda_2a^2_2&=\lim_{n\to\infty}\left(\lambda_1\|u_{1,n}\|^2_2+\lambda_2\|u_{2,n}\|^2_2+(\lambda_1+\lambda_2)\|u_{3,n}\|^2_2\right)\\
&=\lim_{n\to\infty}\frac{3\alpha}{2}\int u_{1,n}u_{2,n}u_{3,n}=\frac{3\alpha}{2}\int u_{1}u_{2}u_{3}.
\end{aligned}
\end{equation}
\vskip1mm

\noindent We now claim that $u_j\not\equiv0$ for any $j\in\{1,2,3\}$.
\vskip2mm

\noindent Indeed, if there exists a $j \in \{1,2,3\}$ such that $u_j=0$, then $\int u_{1,n}u_{2,n}u_{3,n}\to 0$,and by  definition of $P({\bf u}_n)$ we have
\begin{equation*}
\|{\bf u}_n\|_{{\bf {\dot H}}^1}^2-\|{\bf u}_n\|_{6}^6=o_n(1).
\end{equation*}
It follows that
$\lim\limits_{n\to\infty}\|{\bf u}_n
\|_{{\bf {\dot H}}^1}^2
\geq C_{\Sob}^{\frac{3}{2}}$. We then have
\begin{equation*}
\begin{aligned}
m^+(a_1,a_2)&=E({\bf u}_n)-\frac{1}{6}P({\bf u}_n)+o_n(1)\\
&=\frac{1}{3}\|{\bf u}_n\|_{{\bf {\dot H}}^1}^2
-\frac{3\alpha}{4}\int u_{1,n}u_{2,n}u_{3,n}+o_n(1)\\
&\geq \frac13\|{\bf u}_n\|_{{\bf {\dot H}}^1}^2 \ge \frac{1}{3}C_{\Sob}^{\frac{3}{2}},
\end{aligned}
\end{equation*}
and this is a contradiction with respect to  $m^+(a_1,a_2)<0$.
\vskip1mm

It is left to prove that $m^+_r(a_1,a_2)$ is achieved.
From \cite[Lemma A.2]{IN}, we get $\lambda_1,\lambda_2>0$. Moreover, combining \eqref{eq:d3} with \eqref{eq:d4}, the following identity holds:
\begin{equation}\label{eq:d51}
\lambda_1a^2_1+ \lambda_2a^2_2=\lambda_1\|u_1\|^2_2+\lambda_2\|u_2\|^2_2+(\lambda_1+\lambda_2)\|u_3\|^2_2.
\end{equation}
Since $\|u_1\|^2_2+\|u_3\|^2_2\leq a^2_1$ and $\|u_2\|^2_2+\|u_3\|^2_2\leq a^2_2$, it follows from \eqref{eq:d51} that $\|u_1\|^2_2+\|u_3\|^2_2=a^2_1$ and $\|u_2\|^2_2+\|u_3\|^2_2=a^2_2$, and hence ${\bf u}\in \mathcal{P}_{\rad,a_1,a_2}$. By the maximum principle (see \cite[Theorem 2.10]{hanq}), $u_j>0$, $j\in\{1,2,3\}$. We then conclude that ${\bf u}_n\to {\bf u}$ in ${\bf H}^1_{\rad}$ and $E({\bf u})=m^+_r(a_1,a_2)$.
\end{proof}

In the subsequent discussion, we establish a refined upper bound for $m^+_r(a_1,a_2)$ under the condition that $a_1 = a_2$. Specifically, Lemma \ref{lem3.5} below demonstrates that $m^+_r(a_1,a_1)$ is not only negative but also maintains a distance from zero. We introduce the problem
\begin{equation}\label{eq:g1}
\begin{cases}
-\Delta u +\lambda u =\alpha u^2,\\
\displaystyle\int |u|^2=a^2,
\end{cases}
\end{equation}
where $\alpha,a>0$ are fixed. Denote by $J_0(u)$ the following energy:
\begin{equation*}\label{def:J0}
J_0(u)=\frac{1}{2}\| \nabla u\|^2_{2}-\frac{\alpha}{3}\|u\|^{3}_{3}.
\end{equation*}
A solution $u$ to \eqref{eq:g1} can be found as a minimizer of
\begin{equation}\label{eq:e1}
0>m_0(a):=\inf_{u\in S(a)} J_0(u)>-\infty,
\end{equation}
where $\lambda$ is a Lagrange multiplier, and
\[
S(a):=\left\{u\in H^1(\R^3,\R)\quad  \hbox{ s.t. } \quad\|u\|^2_2=a^2\right\}.
\]
A unique positive solution $(\lambda,u_{\alpha})$ to \eqref{eq:g1} is therefore given by
\begin{equation}
\begin{aligned}
\lambda=\frac{\alpha^4a^4}{\|W\|^4_2},\quad  u_{\alpha}=\frac{\lambda}{\alpha}W(\lambda^{\frac{1}{2}}x),
\end{aligned}
\end{equation}
where $W$ is defined in \eqref{eq:w}, and the existence of the latter  is guaranteed  by \cite{KKM}. 
Furthermore,
\begin{equation*}
m_0(a)=-\frac{\alpha^4 a^{6}}{6\|W\|^4_2}.
\end{equation*}
The following Lemma yields the bound away from zero of the minimum $m^+(a_1,a_1)$, which is in turn characterized by  the minimum $m_0$ defined in \eqref{eq:e1}.
\begin{lemma}\label{lem3.5}
Let $\alpha, a_1, a_2>0$. Provided $a=a_1=a_2<D$, then
\begin{equation*}
m^+(a,a)< 3m_0\left(\frac{a}{\sqrt{2}}\right) :=-\frac{\alpha^4 a^{6}}{16\|W\|^4_2}<0.
\end{equation*}
\end{lemma}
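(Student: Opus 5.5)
The plan is to build an explicit competitor in which the three components coincide, so that the three-wave term reduces to a single cubic term and the energy compares directly with $J_0$. Fix $a=a_1=a_2<D$ and let $w=u_\alpha$ be the positive minimizer for $m_0\bigl(\tfrac{a}{\sqrt2}\bigr)$ in \eqref{eq:e1}, so that $\|w\|_2^2=a^2/2$. Set ${\bf w}:=(w,w,w)$. Then $Q_1({\bf w})=Q_2({\bf w})=a^2$, so ${\bf w}\in S(a,a)$. Moreover $\mathrm{Re}\int w_1w_2\overline w_3=\int w^3>0$, so ${\bf w}\in\mathcal M$. A direct computation gives
\begin{equation*}
E({\bf w})=\frac32\|\nabla w\|_2^2-\frac12\|w\|_6^6-\alpha\int w^3=3J_0(w)-\frac12\|w\|_6^6<3J_0(w)=3m_0\Bigl(\frac{a}{\sqrt2}\Bigr).
\end{equation*}
Substituting $a/\sqrt2$ into the formula for $m_0$ gives $3m_0(a/\sqrt2)=-\frac{\alpha^4a^6}{16\|W\|_2^4}$, which is the claimed value.

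It remains to show $m^+(a,a)\le E({\bf w})$. By Lemma \ref{lem2.4}, $m^+(a,a)=m(a,a)=\inf_{V(a,a)}E$, so it suffices to check that ${\bf w}\in B_{\rho_0}$, i.e. $\|\nabla{\bf w}\|_2<\rho_0$.

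First, the Pohozaev identity for \eqref{eq:g1} (obtained from the fiber $s\mapsto J_0(s\star w)$ being critical at $s=1$) gives $\|\nabla w\|_2^2=\frac{\alpha}{2}\|w\|_3^3$. Hence
\begin{equation*}
J_0(w)=-\frac16\|\nabla w\|_2^2,\qquad \|\nabla{\bf w}\|_2^2=3\|\nabla w\|_2^2=-18\,m_0\Bigl(\frac{a}{\sqrt2}\Bigr)=\frac{3\alpha^4a^6}{8\|W\|_2^4}.
\end{equation*}

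Second, I would use $a<D$ and the definition \eqref{def:D}, which makes $\alpha^4D^6$ independent of $\alpha$. This yields
\begin{equation*}
\|\nabla{\bf w}\|_2^2<\frac{3\alpha^4D^6}{8\|W\|_2^4},
\end{equation*}
an explicit multiple of $C_{\Sob}^{3/2}$. The target is $\rho_0^2=(18A_1)^{-1/2}=(C_{\Sob}^3/3)^{1/2}$, and the claim to verify is that this multiple is below it. The powers of $\|W\|_2$ and $\alpha$ should cancel, leaving a purely numerical inequality.

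Fallback if the constants do not close: use $h(\|\nabla{\bf w}\|_2)\le E({\bf w})<0$. This forces $\|\nabla{\bf w}\|_2\in(0,R_0)\cup(R_1,\infty)$. Then it is enough to exclude the outer branch, i.e. to show $\|\nabla{\bf w}\|_2<R_1$, which is a weaker requirement since $R_1>\rho_0$. With either route, ${\bf w}\in V(a,a)$ and hence $m^+(a,a)\le E({\bf w})<3m_0(a/\sqrt2)$.

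I expect this constant comparison to be the main obstacle, since the rest is algebra already set up by Lemmas \ref{lem2.11} and \ref{lem2.4}.
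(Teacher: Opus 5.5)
Your proposal is correct and follows the argument the paper relies on (it defers to the corresponding lemma of the authors' earlier subcritical work): test with ${\bf w}=(w,w,w)$, where $w$ is the positive minimizer for $m_0(a/\sqrt2)$, so that $E({\bf w})=3J_0(w)-\frac12\|w\|_6^6<3m_0(a/\sqrt2)$, and then check that ${\bf w}\in V(a,a)$. The constant comparison you flagged as the main obstacle closes immediately: $\|\nabla{\bf w}\|_2^2=\frac{3\alpha^4a^6}{8\|W\|_2^4}<\frac{3\alpha^4D^6}{8\|W\|_2^4}=\frac{2}{3^{9/2}}C_{\Sob}^{3/2}=\frac{2}{81}\rho_0^2$, so ${\bf w}\in B_{\rho_0}$ and the fallback through $R_1$ is not needed.
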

\begin{proof}
    For a proof, we refer to \cite[Lemma 3.8]{FLY} by the authors.
\end{proof}
We now now a convergence result linking a rescaled ground state of \eqref{eqA0.2} to  a ground state solution for the functional $E_0$, for the scaling parameter going to zero. 
\begin{lemma}\label{lem3.6}
Let $\alpha, a_1, a_2>0$. Suppose that $\epsilon=a_1=a_2<D$. Then for any ground state ${\bf u}\in S(\epsilon_n,\epsilon_n)$ of \eqref{eqA0.2} with $\{\epsilon_n\}$ a sequence going to zero,
we have, up to a subsequence,
\begin{equation*}
\epsilon_n^{-4}{\bf u}(\epsilon_n^{-2}x):=\left(\epsilon_n^{-4}u_1(\epsilon_n^{-2}x), \epsilon_n^{-4}u_2(\epsilon_n^{-2}x),\epsilon_n^{-4}u_3(\epsilon_n^{-2}x)\right)\to {\bf v}_0
 \quad  \text{in}\quad {\bf H}^{1},
\end{equation*}
where ${\bf v}_0$ is a ground state solution of $E_0$ constrained on $S(1,1)$, and $E_0$ is defined in equations \eqref{energy-limit}.
\end{lemma}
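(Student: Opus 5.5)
Let ${\bf u}_n\in S(\epsilon_n,\epsilon_n)$ be the ground states and set ${\bf w}_n(x):=\epsilon_n^{-4}{\bf u}_n(\epsilon_n^{-2}x)$. The plan is a rescaling argument: show that ${\bf w}_n$ is a minimizing sequence for $m_0(1,1)$, then use compactness of such sequences for $E_0$ on $S(1,1)$.

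\textbf{Scaling identities.} One checks $\|w_{j,n}\|_2^2=\epsilon_n^{-8}\epsilon_n^{6}\|u_{j,n}\|_2^2=\epsilon_n^{-2}\|u_{j,n}\|_2^2$, so ${\bf w}_n\in S(1,1)$. Moreover
\[
\|\nabla{\bf w}_n\|_2^2=\epsilon_n^{-6}\|\nabla{\bf u}_n\|_2^2,\qquad \int w_{1,n}w_{2,n}\overline w_{3,n}=\epsilon_n^{-6}\int u_{1,n}u_{2,n}\overline u_{3,n},\qquad \|{\bf w}_n\|_6^6=\epsilon_n^{-18}\|{\bf u}_n\|_6^6 .
\]
Hence
\[
\epsilon_n^{-6}E({\bf u}_n)=E_0({\bf w}_n)-\frac{\epsilon_n^{6}}{6}\|{\bf w}_n\|_6^6 .
\]

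\textbf{Upper bound.} Any ${\bf v}\in S(1,1)\cap\mathcal M$ with $E_0({\bf v})<0$ (for instance $e^{i\theta}$-adjusted positive functions) rescales to ${\bf v}^{\epsilon}(x)=\epsilon^4{\bf v}(\epsilon^2x)\in S(\epsilon,\epsilon)$. Its gradient norm is $O(\epsilon^3)$, so it lies in $V(\epsilon,\epsilon)$ for small $\epsilon$, since $\rho_0$ is fixed. Then $m^+(\epsilon,\epsilon)\le E({\bf v}^\epsilon)\le\epsilon^6E_0({\bf v})$. Taking the infimum over such ${\bf v}$, which is dense in the relevant sense since the infimum $m_0(1,1)$ can be approached by positive radial functions after rearrangement and phase alignment, gives $\limsup\epsilon_n^{-6}m^+(\epsilon_n,\epsilon_n)\le m_0(1,1)$. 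Lemma~\ref{lem3.5} also confirms that $m_0(1,1)<0$.

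\textbf{Lower bound and boundedness.} From \eqref{eq:b7-prev}-type estimates for ground states, namely $P({\bf u}_n)=0$ and $E<0$, we get $\|\nabla{\bf u}_n\|_2^2\lesssim\epsilon_n^{3}\|\nabla{\bf u}_n\|_2^{3/2}$, so $\|\nabla{\bf u}_n\|_2^2\lesssim\epsilon_n^6$. Thus $\|\nabla{\bf w}_n\|_2$ is bounded. By Sobolev, $\epsilon_n^6\|{\bf w}_n\|_6^6\lesssim\epsilon_n^6\|\nabla{\bf w}_n\|_2^6\to0$. Therefore $E_0({\bf w}_n)=\epsilon_n^{-6}m^+(\epsilon_n,\epsilon_n)+o(1)\le m_0(1,1)+o(1)$, and since ${\bf w}_n\in S(1,1)$ we also have $E_0({\bf w}_n)\ge m_0(1,1)$. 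So ${\bf w}_n$ is a minimizing sequence for $m_0(1,1)$.

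\textbf{Compactness (main obstacle).} I would show that minimizing sequences of $m_0(1,1)$ are precompact in ${\bf H}^1$. The ground states are positive and radial by Lemma~\ref{lem2.6}, so ${\bf w}_n$ is radial and nonnegative. Radial compactness gives ${\bf w}_n\to{\bf w}$ in ${\bf L}^3$ up to a subsequence, so the cubic term converges and $E_0({\bf w})\le m_0(1,1)<0$ by weak lower semicontinuity. In particular ${\bf w}\neq0$ and no component carrying the coupling vanishes.

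The remaining task is to rule out mass loss, i.e.\ to show $Q_1({\bf w})=Q_2({\bf w})=1$. One route is strict subadditivity via scaling: for $(b_1,b_2)\le(1,1)$ with a strict inequality, the map $t\mapsto t^4{\bf v}(t^2x)$ gives $m_0(tb_1,tb_2)\le t^6m_0(b_1,b_2)$, whence $m_0(b_1,b_2)>m_0(1,1)$; the mixed-mass constraint must be handled carefully here. The alternative route is to mimic the end of Lemma~\ref{lem2.6}. Pass to the Euler--Lagrange limit of the Palais--Smale sequence with multipliers $\lambda_{j,n}\epsilon_n^{-4}$, which stay bounded. Show the limit multipliers are positive, and use the identity analogous to \eqref{eq:d51} to force $Q_1({\bf w})=Q_2({\bf w})=1$.

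Once the masses are preserved, $L^2$ convergence holds, and then $E_0({\bf w})=m_0(1,1)$ forces $\|\nabla{\bf w}_n\|_2\to\|\nabla{\bf w}\|_2$. This gives strong ${\bf H}^1$ convergence, and ${\bf w}={\bf v}_0$ is a minimizer, hence a ground state of $E_0$ on $S(1,1)$.
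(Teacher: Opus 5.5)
Your scaling, upper-bound and lower-bound steps are the paper's argument that ${\bf w}_n$ is a minimizing sequence for $m_0(1,1)$. The routes differ only at the compactness step.

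The paper simply invokes \cite[Lemma 3.6]{FLY}: \emph{every} minimizing sequence of $m_0(1,1)$ is precompact up to translations. Your second route is self-contained. It uses that the ${\bf w}_n$ are nonnegative radial \emph{exact} solutions of the rescaled system $-\Delta w_{1,n}+\mu_{1,n}w_{1,n}=\epsilon_n^{12}w_{1,n}^5+\alpha w_{2,n}w_{3,n}$ (and analogues), with $\mu_{j,n}:=\epsilon_n^{-4}\lambda_{j,n}$. This route does close:
\begin{itemize}
\item Strauss compactness in ${\bf L}^3$ gives $E_0({\bf w})\le m_0(1,1)<0$, so every $w_j\neq0$.
\item Nehari minus Pohozaev gives $\mu_{1,n}+\mu_{2,n}=\frac32\alpha\int w_{1,n}w_{2,n}w_{3,n}$. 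This is the rescaled form of the paper's otherwise unused claim \eqref{eq:Dd2.2}.
\item Each $\mu_{j,n}$ is bounded, because $\mu_{j,n}\|w_{j,n}\|_2^2$ is bounded and $\liminf_n\|w_{j,n}\|_2\ge\|w_j\|_2>0$.
\item \cite[Lemma A.2]{IN} gives $\mu_j>0$.
\item The limit Nehari and Pohozaev identities yield $\mu_1(1-Q_1({\bf w}))+\mu_2(1-Q_2({\bf w}))=0$, hence $Q_1({\bf w})=Q_2({\bf w})=1$, as in \eqref{eq:d51}.
\end{itemize}

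Your route avoids strict subadditivity for the mixed-mass problem and identifies the limit multipliers. The paper's route handles arbitrary minimizing sequences, at the price of translations.

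Lemma \ref{lem2.6} only \emph{produces} a positive radial ground state; it does not say every ground state is one, since translates of ground states are ground states. So your argument proves the lemma for the ground states of Lemma \ref{lem2.6}. These are the ones used in Theorem \ref{th1.1}(iii), and this is the only reading under which convergence without translations can hold. For general ground states you would have to pass to translates, where Strauss compactness is no longer available.

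Drop your first route. The map $t^4{\bf v}(t^2x)$ multiplies both mixed masses by $t^2$, so it says nothing when, for example, $Q_1({\bf w})<1=Q_2({\bf w})$. Monotonicity in each mass separately is exactly what the multiplier identity supplies.

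There are two exponent slips, neither affecting your conclusions. The identity should read $\epsilon_n^{-6}E({\bf u}_n)=E_0({\bf w}_n)-\frac{\epsilon_n^{12}}{6}\|{\bf w}_n\|_6^6$. Since $A_2\sim\epsilon_n^{3/2}$, the estimate is $\|\nabla{\bf u}_n\|_2^2\lesssim\epsilon_n^{3/2}\|\nabla{\bf u}_n\|_2^{3/2}$; with $\epsilon_n^{3}$ you would get $\epsilon_n^{12}$, not $\epsilon_n^6$.

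Finally, $m_0(1,1)<0$ does not need Lemma \ref{lem3.5}: $E_0(s\star{\bf v})<0$ for small $s>0$ and any positive ${\bf v}\in S(1,1)$.
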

\begin{proof}
Fix $\alpha>0$. For any $\{\epsilon_{n}\}$ with $\epsilon_{n}\to 0^+$ as $n\to +\infty$,  let ${\bf u}_n\in V(\epsilon_{n},\epsilon_{n})$ be a minimizer of $m^+(\epsilon_{n},\epsilon_{n})$, where \[V(\epsilon_{n},\epsilon_{n})=\left\{ {\bf u}_{n}\in S(\epsilon_{n},\epsilon_{n})\cap\mathcal{M} : \|{\bf u}\|_{{\bf {\dot H}}^1}< \rho_0 \right\}.\]
By Lemma \ref{lem2.6}, we get that ${\bf u}_{n}$ is a ground state of $E$ restricted to ${S(\epsilon_{n},\epsilon_{n})}$. Then the Lagrange multipliers rule implies the existence of some $\lambda_{1,\epsilon_{n}},\lambda_{2,\epsilon_{n}} \in \R$ such that
\begin{equation}\label{eq:y4}
\begin{cases}
\displaystyle\int\left( \nabla u_{1,n}\nabla\overline{\phi}_1+ \lambda_{1,\epsilon_n}u_{1,n}\overline{\phi}_1
-|u_{1,n}|^{4}u_{1,n}\overline{\phi}_1\right)=\alpha\mathrm{Re}\int u_{3,n}\overline{u}_{2,n}\overline{\phi}_1, \\
\displaystyle\int \left(\nabla u_{2,n}\nabla \overline{\phi}_2+ \lambda_{2,\epsilon_n}u_{2,n}\overline{\phi}_2
-|u_{2,n}|^{4}u_{2,n}\overline{\phi}_2\right)=\alpha\mathrm{Re}\int u_{3,n}\overline{u}_{1,n}\overline{\phi}_2, \\
\displaystyle\int\left( \nabla u_{3,n}\nabla \overline{\phi}_3+ (\lambda_{1,\epsilon_n}+\lambda_{2,\epsilon_n})u_{3,n}\overline{\phi}_3
-|u_{3,n}|^{4}u_{3,n}\overline{\phi}_3\right)=\alpha\mathrm{Re}\int u_{1,n}u_{2,n}\overline{\phi}_3,\\
\end{cases}
\end{equation}
for each ${\bm\phi}=(\phi_1,\phi_2,\phi_3)\in {\bf H}^{1}$.
\vskip3mm
We claim that 
\begin{equation}\label{eq:Dd2.2}
\frac{\alpha^4 \epsilon_{n}^{4}}{8\|W\|^4_2}<\lambda_{1,\epsilon_{n}}+\lambda_{2,\epsilon_{n}}<\frac{81\alpha^4 \epsilon_{n}^{4}}{8\|W\|^4_2}.
\end{equation}
By using twice that $P({\bf u}_{n})=0$ and by means Lemma \ref{lem3.5}, we have:
\begin{equation}\label{eq:x3}
 -\frac{\alpha^4 \epsilon_{n}^{6}}{16\|W\|^4_2}>E({\bf u}_{n})=\frac{1}{3}\|{\bf u}_n\|_{{\bf {\dot H}}^1}^2-\frac{3\alpha}{4}\mathrm{Re}\int u_{1,n}u_{2,n}\overline{u}_{3,n}
 \end{equation}
 and
 \begin{equation}\label{eq:x3bis}
 -\frac{\alpha^4 \epsilon_{n}^{6}}{16\|W\|^4_2}>E({\bf u}_{n})=-\frac{1}{6}\|{\bf u}_n\|_{{\bf {\dot H}}^1}^2+\frac{1}{2}\| {\bf u}_n\|^6_6.
\end{equation}
It follows immediately from \eqref{eq:x3bis} that $\displaystyle \frac{3\alpha^4\epsilon_{n}^{6}}{8\|W\|^4_2}< \|{\bf u}_n\|_{{\bf {\dot H}}^1}^2$; while, since the left-hand side of \eqref{eq:x3} is negative, by using \eqref{eq:z4}
we get 
\begin{equation}\label{eq:ba1bis}
\frac{1}{3}\|{\bf u}_n\|_{{\bf {\dot H}}^1}^2< \frac{3\alpha}{4}\mathrm{Re}\int u_{1,n}u_{2,n}\overline{u}_{3,n}< \frac{3A_2}{4}\|{\bf u}_n\|_{{\bf {\dot H}}^1}^{\frac32}
\end{equation}
and then $\displaystyle \|{\bf u}_n\|_{{\bf {\dot H}}^1}^2 < \frac{243\alpha^4\epsilon_n^6}{16\|W\|_2^4}$.
In conclusion
\begin{equation*}\label{ba1}
  \frac{3\alpha^4\epsilon_{n}^{6}}{8\|W\|^4_2} < \|{\bf u}_n\|_{{\bf {\dot H}}^1}^2 < \frac{243\alpha^4\epsilon_n^6}{16\|W\|_2^4}.
\end{equation*}
It follows from \eqref{eq:y4} that
\begin{equation*}
\begin{aligned}
(\lambda_{1,\epsilon_{n}}+\lambda_{2,\epsilon_{n}})\epsilon_{n}^2=-\|{\bf u}_n\|_{{\bf {\dot H}}^1}^2+\|{\bf u}_n\|_{6}^6+3\alpha \mathrm{Re}\int u_{1,n}u_{2,n}\overline{u}_{3,n}.
\end{aligned}
\end{equation*}
Exploiting again the fact that $P({\bf u}_n)=0$ and by mimicking the same estimates as in \eqref{eq:ba1bis},  we write 
\begin{equation}\label{eq:ba1quat}
(\lambda_{1,\epsilon_{n}}+\lambda_{2,\epsilon_{n}})\epsilon_{n}^2=\frac{3}{2}\alpha\mathrm{Re}\int u_{1,n}u_{2,n}\overline{u}_{3,n}<\frac{81\alpha^4\epsilon_n^6}{8\|W\|_2^4}.
\end{equation}
By using instead the definition of the energy and Lemma \ref{lem3.5} , it is easy too see that 
\begin{equation}\label{eq:ba1ter}
(\lambda_{1,\epsilon_{n}}+\lambda_{2,\epsilon_{n}})\epsilon_{n}^2=-2E({\bf u}_n)>\frac{\alpha^4\epsilon_{n}^{6}}{8\|W\|^4_2}
\end{equation}
Estimates \eqref{eq:ba1ter} and \eqref{eq:ba1quat} gives \eqref{eq:Dd2.2} and the proof of the claim is done.

\vskip2mm

Define now 
\begin{equation}\label{def:v-i}
{\bf v}_n=\epsilon_{n}^{-4}{\bf u}_{n}(\epsilon_{n}^{-2}x).
\end{equation}
Then, for $j\in\{1,2,3\}$,
\begin{equation*}
\|\nabla v_{j,n}\|^2_2=\epsilon_{n}^{-6}\|\nabla u_{j,n}\|^2_2, \quad \|v_{j,n}\|^6_6=\epsilon_{n}^{-18}\|u_{j,n}\|^6_6, \quad \text{and} \quad \|v_{j,n}\|^2_2=\epsilon_{n}^{-2}\|u_{j,n}\|^2_2.
\end{equation*}
Therefore, for $\epsilon_{n}\to 0$ as $n\to \infty$, we have
\begin{equation*}
\begin{aligned}
m^+(\epsilon_{n},\epsilon_{n})+o_n(1)&= E({\bf u}_n)= \epsilon_{n}^{6}E_0({\bf v}_n)- \epsilon_{n}^{18}\|{\bf v}_n\|^6_6\\
&\ge \epsilon_{n}^{6}m_0(1,1)+o\left(\epsilon_{n}^{6}\right).
\end{aligned}
\end{equation*}
\vskip1mm
\noindent From the definition of $m_0(1,1)$, see \eqref{eq:e1}, for any $\varepsilon>0$, there exists ${\bf v}_0\in S(1,1)$ such that
\begin{equation*}
E_0({\bf v}_0)\leq m_0(1,1)+\varepsilon.
\end{equation*}
By definition \eqref{def:v-i}, $u_{j,\epsilon_{n}}:=\epsilon_{n}^4v_{j,0}\left(\epsilon_{n}^2 x\right)$ for $j\in\{1,2,3\}$. Therefore, ${\bf u}_{\epsilon_{n}}\in V(\epsilon_{n},\epsilon_{n})$ for $\epsilon_{n}$ small enough. Then
\begin{equation*}\label{x21}
\begin{aligned}
m^+(\epsilon_{n},\epsilon_{n})=\inf_{{\bf u} \in V(\epsilon_{n},\epsilon_{n})} E({\bf u})&\leq E({\bf u}_{\epsilon_{n}})\leq \epsilon_{n}^{6}E_0({\bf v}_0)+ \epsilon_n^{18}\|{\bf v}_0\|^6_6\\
&\leq\epsilon_{n}^{6}\left(m_0(1,1)+\varepsilon\right)+o\left(\epsilon_{n}^{6}\right),
\end{aligned}
\end{equation*}
for all $\varepsilon>0$ and $\epsilon_{n}>0$ small enough. Therefore,
\[
m^+(\epsilon_{n},\epsilon_{n})= \epsilon_{n}^{6}m_0(1,1)+o\left(\epsilon_{n}^{6}\right).
\]
This implies that $\{{\bf v}_n\}$ is a minimizing sequence for $m_0(1,1)$. If $\{{\bf u}_n\}$ is a minimizing sequence of $m^+(\epsilon_{n},\epsilon_{n})$,  $E({\bf u}_n)=m^+(\epsilon_{n},\epsilon_{n})+o(1)$.
By the definition of $\{{\bf v}_n\}$, see \eqref{def:v-i}, we have
\[
E({\bf v}_n)=E(\epsilon_{n}^{-4}{\bf u}_n(\epsilon_{n}^{-2}x))=m_0(1,1)+o(\epsilon_{n}^6),
\]
i.e., $\{{\bf v}_n\}$ is a minimizing sequence of $m_0(1,1)$.
Up to a subsequence, there exists a radially symmetric Palais-Smale sequence $\{{\tilde{\bf v}}_n\}$ such that $\|{\tilde{\bf v}}_n-{\bf v}_n\|_{{\bf H}^1}=o_n(1)$.
Similar to the proof of Lemma 3.6 in \cite{FLY}, up to translation, there exists a minimizer ${\bf v}_0$ for $m_0(1,1)$ such that ${\tilde{\bf v}}_n\to {\bf v}_0$ in ${\bf H}^1$.
Indeed, by \cite[Lemma 3.6]{FLY} for any minimizing sequence of $m_0(1,1)$, there exists a compact subsequence.
\end{proof}

We give now an essential local compactness result for the functional E({\bf v}).

\begin{lemma}\label{lem3.3}
Let $\alpha, a_1, a_2>0$. If $\max\{a_1,a_2\}<D$, then $m^-(a_1,a_2)$ is achieved by a function in $(a_1,a_2)$, which is real-valued, positive and radially symmetric.
\end{lemma}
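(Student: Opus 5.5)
By Lemma \ref{lem2.5} it is enough to show that $m^-_r(a_1,a_2)$ is attained by a nonnegative radial function, and then upgrade to positivity. The argument follows the proof of Lemma \ref{lem2.6}; the key new input is the energy estimate \eqref{moun-pass},
\[
m^-(a_1,a_2)<\tfrac13 C_{\Sob}^{3/2}+m^+(a_1,a_2),
\]
which restores compactness at the mountain pass level.

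\textbf{Step 1: a radial Palais--Smale sequence.} Take a minimizing sequence for $m^-_r$ and replace each ${\bf w}_n$ by $\sigma_{|{\bf w}_n|^*}\star|{\bf w}_n|^*$. Since $E(|{\bf w}|^*)\le E({\bf w})$ and $E(\sigma_{\bf w}\star{\bf w})=\max_s E(s\star{\bf w})$ (Lemma \ref{lem2.3}(iii)), this does not raise the energy. Membership in $\mathcal M$ is kept, because $\int|u_1|^*|u_2|^*|u_3|^*\ge\int|u_1||u_2||u_3|\ge\mathrm{Re}\int u_1u_2\overline u_3>0$. Using the $C^1$ map ${\bf u}\mapsto\sigma_{\bf u}$ (Lemma \ref{lem2.3}(iv)) and the minimax argument on the natural constraint $\bar{\mathcal P}_{a_1,a_2}$ (Lemma \ref{lem2.2}), we obtain a nonnegative radial Palais--Smale sequence $\{{\bf u}_n\}$ for $E|_{S(a_1,a_2)}$ at level $m^-_r$, with $P({\bf u}_n)\to0$.

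\textbf{Step 2: boundedness and weak limit.} We have $E({\bf u}_n)-\frac16P({\bf u}_n)=\frac13\|{\bf u}_n\|_{{\bf\dot H}^1}^2-\frac34\alpha\int u_{1,n}u_{2,n}u_{3,n}$. Combined with \eqref{eq:z4}, this bounds $\|{\bf u}_n\|_{{\bf\dot H}^1}$. Radial compactness then gives ${\bf u}_n\rightharpoonup{\bf u}$ in ${\bf H}^1_{\rad}$, with strong convergence in ${\bf L}^r$ for $2<r<6$, so the cubic coupling term converges. The Lagrange multipliers $\lambda_{j,n}$ are bounded and converge, and ${\bf u}$ solves \eqref{eq:b4} with $P({\bf u})=0$.

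\textbf{Step 3: ruling out loss of compactness (main obstacle).} Set ${\bf w}_n={\bf u}_n-{\bf u}$. Brezis--Lieb gives
\[
\|\nabla{\bf w}_n\|_2^2-\|{\bf w}_n\|_6^6=o_n(1),
\]
so either ${\bf w}_n\to0$ in ${\bf\dot H}^1$ or $\lim\|\nabla{\bf w}_n\|_2^2\ge C_{\Sob}^{3/2}$. In the second case $E({\bf u}_n)=E({\bf u})+\frac13\|\nabla{\bf w}_n\|_2^2+o_n(1)$. We then split on ${\bf u}$:
\begin{itemize}
\item If ${\bf u}\ne0$, it has $\int u_1u_2u_3\ge0$ and lies in $\mathcal P$, and we need $E({\bf u})\ge m^+(a_1,a_2)$. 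This requires placing ${\bf u}$ after rescaling its masses up to $(a_1,a_2)$, using monotonicity of $m^+$ in the masses, or else showing that the triple integral of ${\bf u}$ is positive so that ${\bf u}\in\mathcal P^+\cup\mathcal P^-$ of reduced masses.
\item If ${\bf u}=0$, or some component of ${\bf u}$ vanishes so that the coupling term is zero, then $P({\bf u})=0$ forces $E({\bf u})\ge0$ or ${\bf u}=0$.
\end{itemize}
In both subcases $m^-_r\ge m^++\frac13C_{\Sob}^{3/2}$, contradicting \eqref{moun-pass}. Proving \eqref{moun-pass} itself is the hard part. I would use test paths $t\star({\bf u}^++\tau\,U_\varepsilon\,\mathbf e)$, where ${\bf u}^+$ is the positive ground state from Lemma \ref{lem2.6} and $U_\varepsilon$ is the cut-off Aubin--Talenti bubble \eqref{Sobolev} placed in the components, following \cite{WW}. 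Positivity of ${\bf u}^+$ makes the cross terms $\int(u^+_i)^5U_\varepsilon$ and the three-wave cross terms favourable, of order $\varepsilon^{1/2}$. These beat the $O(\varepsilon)$ bubble error, and the extra mass from the bubble is fixed by a small rescaling.

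\textbf{Step 4: conclusion.} With ${\bf w}_n\to0$ in ${\bf\dot H}^1$, the multipliers $\lambda_1,\lambda_2$ are positive by \cite[Lemma A.2]{IN}. The identity analogous to \eqref{eq:d51} then gives $L^2$ convergence, so ${\bf u}_n\to{\bf u}$ in ${\bf H}^1$. Hence ${\bf u}\in\mathcal P^-_{\rad,a_1,a_2}\cap\mathcal M$ attains $m^-$, where $\mathcal P^0=\emptyset$ by Lemma \ref{lem2.2} and ${\bf u}\in\mathcal M$ because $E({\bf u})>0$ on $\mathcal P^-$ forces the coupling term to be positive. Finally, each component is nontrivial by the same argument as in Lemma \ref{lem2.6}, and the strong maximum principle gives $u_j>0$.
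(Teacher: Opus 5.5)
Your overall architecture matches the paper's: a radial Palais--Smale sequence, a weak limit, a Brezis--Lieb dichotomy, and compactness restored by \eqref{moun-pass} proved with a bubble added to the ground state. However, the step you single out as the hard one is not established, and the mechanism you give for it would not work. The cross terms you call favourable, $\tau\int (u_1^+)^5\tilde U_\varepsilon$ and $\alpha\tau\int u_2^+u_3^+\tilde U_\varepsilon$, come together with $\tau\int\nabla u_1^+\cdot\nabla\tilde U_\varepsilon$, which you omit. All three are of order $\varepsilon^{1/2}$, and testing the first equation of \eqref{eq:b4} with $\tilde U_\varepsilon$ (this is \eqref{eq:ide3.22b}) shows that their total contribution is exactly $-\lambda_1\tau\int u_1^+\tilde U_\varepsilon$. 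The mass correction is not a negligible perturbation. The mass defect is $2\tau\int u_1^+\tilde U_\varepsilon\sim\varepsilon^{1/2}$, and restoring it costs $+\lambda_1\tau\int u_1^+\tilde U_\varepsilon+o(\varepsilon^{1/2})$; this is what \eqref{eq:AL}--\eqref{eq:ide3.22} compute via the Pohozaev-type identity for the first equation. So every term linear in the bubble cancels up to $o(\varepsilon^{1/2})$, as it must, since ${\bf u}^+$ is a constrained critical point. The gain that actually gives the strict inequality is $-\int u_1^+(\tau\tilde U_\varepsilon)^5\sim-\varepsilon^{1/2}$, coming from $(a+b)^6\ge a^6+b^6+6a^5b+6ab^5$. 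It beats the $O(\varepsilon)$ remainders: the bubble error, the second-order mass correction, and $\alpha\tau\int u_2^+u_3^+\bigl(\tilde U_\varepsilon-s^{1/2}\tilde U_\varepsilon(s\cdot)\bigr)$. This term does not appear in your sketch. Note also that the bubble must sit in a single component. With weights $\mathbf e=(e_1,e_2,e_3)$, the maximum over $\tau$ of the pure bubble part is $\frac13 C_{\Sob}^{3/2}(\sum_j e_j^2)^{3/2}(\sum_j e_j^6)^{-1/2}$, which exceeds $\frac13 C_{\Sob}^{3/2}$ unless only one $e_j$ is nonzero.

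Second, in Step 3 you leave open the inequality $E({\bf u})\ge m^+(a_1,a_2)$ for a weak limit whose mixed masses may be deficient. You suggest monotonicity of $m^+$ in $(a_1,a_2)$, but that is not available here: the mixed masses and the constraints $\mathcal M$, $B_{\rho_0}$ make it nontrivial. The paper avoids this by ordering the steps differently, before any dichotomy:
\begin{enumerate}
\item \eqref{moun-pass} and $m^+<0$ give $m^-<\frac13 C_{\Sob}^{3/2}$, which excludes a vanishing component of the weak limit.
\item The Liouville-type lemma invoked in Lemma \ref{lem2.6} then gives $\lambda_1,\lambda_2>0$.
\item The identity analogous to \eqref{eq:d51} forces $Q_1({\bf u})=a_1^2$ and $Q_2({\bf u})=a_2^2$. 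This identity uses only compactness of the cubic term and $P({\bf u})=0$, not strong $\dot{\bf H}^1$ convergence.
\item The maximum principle gives $u_j>0$, so ${\bf u}\in\bar{\mathcal P}_{a_1,a_2}$ and $E({\bf u})\ge m^+(a_1,a_2)$ by Lemma \ref{lem2.4}.
\end{enumerate}
After that the dichotomy closes in one line. You have all these ingredients in Step 4, but they come after Step 3 needs them. Incidentally, ${\bf u}\in\mathcal M$ follows from $u_j>0$. Positivity of $E$ on $\mathcal P^-$ does not force the coupling to be positive, since on $\mathcal P$ one has $E=\frac13\|\nabla{\bf u}\|_2^2-\frac34\alpha\,\mathrm{Re}\int u_1u_2\overline u_3$, which is also positive when the coupling is nonpositive.
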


\begin{proof}
We need to show that $m^-_r(a_1,a_2)$ is attained.
Therefore, we can choose a real (component-wise) non-negative and radially symmetric Palais-Smale sequence $\{{\bf u}_n\}$ for $m^-(a_1,a_2)$ with $P({\bf u}_{n})=o_n(1)$, i.e. $\lim\limits_{n\to \infty} E({\bf u}_{n})=m^-(a_1,a_2)$ and $E'|_{S(a_1,a_2)}({\bf u}_n)\to 0$ as $n\to \infty$ (see \cite{FLY,Soave2}).
Similar to the proof of Lemma \ref{lem2.6}, we have that the sequence $\{{\bf u}_n\}$ is bounded in $H^1(\R^3,\R^3)$.
There exists $\tilde{\bf u}$ such that ${\bf u}_{n}\rightharpoonup \tilde{\bf u}$ in ${\bf H}^1$. Hence, the limit ${\tilde{\bf u}}$ satisfies $\tilde{u}_j\ge 0$, for $j\in\{1,2,3\}$, the latter being  radial functions.\\

\noindent   In order to prove the strong convergence in ${\bf H}^1$, we firstly claim that the following crucial refined bound:
\begin{equation}\label{eq:A1b}
m^-(a_1,a_2)<m^+(a_1,a_2)+\frac{1}{3}C_{\Sob}^{\frac{3}{2}}.
\end{equation}
Let ${\bf u}=(u_1,u_2,u_3)\in S(a_1,a_2)$ be the ground state solution with $\lambda_1,\lambda_2>0$, i.e., $E({\bf u})=m^+(a_1,a_2)$, and ${\bf u}$ is a smooth solution   to \eqref{eq:b4}.
Recall from \eqref{Sobolev} the definition of $U_\varepsilon$. Let $\tilde{U}_{\varepsilon}=\chi(x)U_{\varepsilon}$ where $\chi$ is a cut-off function such that $\chi(x)=1$ for $|x|\leq1$ and $\chi(x)=0$ for $|x|>2$, we have
\begin{equation}\label{eq:3.4u}
\|\nabla \tilde{U}_{\varepsilon}\|^2_2=C_{\Sob}^{\frac{3}{2}}+O(\varepsilon)\quad  \text{and} \quad \|\tilde{U}_{\varepsilon}\|^{6}_{6}=C_{\Sob}^{\frac{3}{2}}+O(\varepsilon^2).
\end{equation}
We define 
\[
w_{\varepsilon,t}=u_1+t\tilde{U}_{\varepsilon}
\quad\hbox{ and }\quad \tilde{w}_{\varepsilon,t}=s^{\frac{1}{2}}w_{\varepsilon,t}(sx).
\]
By direct calculations, we have
\begin{equation*}
\|\nabla \tilde{w}_{\varepsilon,t}\|^2_2=\|\nabla w_{\varepsilon,t}\|^2_2, \qquad \|\tilde{w}_{\varepsilon,t}\|^{6}_{6}=\| w_{\varepsilon,t}\|^{6}_{6},
\end{equation*}
and
\[
\|\tilde{w}_{\varepsilon,t}\|^2_2=s^{-2}\|w_{\varepsilon,t}\|^2_2.
\]
We choose $s=\frac{\|w_{\varepsilon,t}\|_2}{\|u_1\|_2}$ such that $(\tilde{w}_{\varepsilon,t},u_2,u_3)\in S(a_1,a_2)$. By Lemma \ref{lem2.3}, there exists $\tau_{\varepsilon,t}\in \R$ such that $\tau_{\varepsilon,t}\star \left(\tilde{w}_{\varepsilon,t},u_2,u_3\right)\in \mathcal{P}^{-}_{a_1,a_2}$. Thus
\begin{equation}\label{eq:3.5b}
\tau^{\frac{1}{2}}_{\varepsilon,t}\left(\sum^{3}_{j=2}\|\nabla u_j\|^2_2+\|\nabla \tilde{w}_{\varepsilon,t}\|^2_2\right)= \tau^{\frac{9}{2}}_{\varepsilon,t}\left(\sum^{3}_{j=2}\|u_j\|^6_6
+\|\tilde{w}_{\varepsilon,t}\|^{6}_{6}\right)+\alpha \int\tilde{w}_{\varepsilon,t}u_2u_3.
\end{equation}
Since ${\bf u}\in \mathcal{P}^+_{a_1,a_2}$, from Lemma \ref{lem2.6}, $\tau_{\varepsilon,0}>0$. From \eqref{eq:3.4u} and \eqref{eq:3.5b}, we have $\tau_{\varepsilon,t}\to 0$ as $t\to +\infty$ and $\varepsilon>0$ small enough.  
Let us observe that $m^-(a_1,a_2)\le E(\tau_{\varepsilon,t}\star \tilde{w}_{\varepsilon,t},u_2,u_3)$. As there  
exists a $t_{\varepsilon}$ such that $\tau_{\varepsilon,t_{\varepsilon}}=1$ for $\varepsilon$ small enough, we can consider  
\begin{equation*}
 m^-(a_1,a_2)\le \sup_{t\ge 0}E\left(\tilde{w}_{\varepsilon,t},u_2,u_3\right).
\end{equation*}

\noindent Moreover, there exists $t_0>0$ such that
\begin{equation*}
\begin{aligned}
E\left(\tilde{w}_{\varepsilon,t},u_2,u_3\right)&<m^+(a_1,a_2)+\frac{1}{3}C_{\Sob}^{\frac{3}{2}}
\end{aligned}
\end{equation*}
for $\frac{1}{t_0}\leq t \leq t_0$. 
Since the function $\tilde{U}_\varepsilon$ is compactly supported, we have that
\begin{equation*}
\int u_1\tilde{U}_{\varepsilon} \sim  \varepsilon^{\frac{5}{2}}\int^{\frac{1}{\varepsilon}}_{1}\frac{1}{(1+r^2)^{1/2}}r^{2} dr
\sim \varepsilon^{\frac{1}{2}} 
\end{equation*}
and 
\[\int|\tilde{U}_\varepsilon|^2\sim \varepsilon,
\]
Thus, by the definition of $s$ and $w_{\varepsilon,t}$,
\begin{equation}\label{eq:3.6c}
s^2=\frac{\|w_{\varepsilon,t}\|^2_2}{\|u_1\|^2_2}=1+\frac{2t}{\|u_1\|^2_2}\int u_1\tilde{U}_{\varepsilon}
+\frac{t^2\|\tilde{U}_{\varepsilon}\|^2_2}{\|u_1\|^2_2}=1+O(\varepsilon^{\frac{1}{2}})
\end{equation}
for $\frac{1}{t_0}\leq t\leq t_0$. In addition, since for $a,b>0$ $(a+b)^{6}\geq a^{6}+b^{6}+6(a^{5}b+ab^{5})$ we get
\begin{equation*}
\begin{aligned}
E\left(\tilde{w}_{\varepsilon,t},u_2,u_3\right)&=\frac{1}{2}\left(\int|\nabla \tilde{w}_{\varepsilon,t}|^2+\sum_{j=2}^3\int|\nabla u_j|^2
\right)-\frac{1}{6}\left(\int|\tilde{w}_{\varepsilon,t}|^{6}+\sum_{j=2}^3\int|u_j|^6\right)\\
&\qquad-\alpha\int\tilde{w}_{\varepsilon,t}u_2u_3\\
&\leq E({\bf u})+\frac{t^2}{2}\int|\nabla \tilde{U}_\varepsilon|^2
-\frac{t^{6}}{6}\int|\tilde{U}_\varepsilon|^{6}
\\
&\qquad+t\int \nabla u_1\nabla \tilde{U}_\varepsilon-\int u_1|t\tilde{U}_\varepsilon|^{5}-t\int|u_1|^{4}u_1\tilde{U}_\varepsilon
\\
&\qquad-\alpha t\int u_2u_3s^{\frac{1}{2}}\tilde{U}_{\varepsilon}(sx)-\alpha\left(\int s^{\frac{1}{2}}u_1(sx)u_2u_3-\int u_1u_2u_3\right).
 \end{aligned}
\end{equation*}
We conclude from \eqref{eq:3.6c} and the Taylor expansion  in $s$ centered in $s_0=1$ that
\begin{equation}\label{eq:AL3}
\begin{aligned}
\int s^{\frac{1}{2}}u_1(sx)u_2u_3
= \int u_1u_2u_3+\frac{t}{\|u_1\|^2_2}\int u_1\tilde{U}_{\varepsilon}\int \left(\frac{1}{2}u_1u_2u_3+u_2u_3\nabla u_1\cdot x\right)+o(\varepsilon^{\frac{1}{2}}).
\end{aligned}
\end{equation}
Since ${\bf u}$ solves \eqref{eq:b4}, by multiplying both sides of the first equation of \eqref{eq:b4} by $x\cdot \nabla u_1$ and $u_1$, integrating over $\R^3$, we get
\begin{equation}\label{eq:AL}
\alpha\int u_2u_3\nabla u_1\cdot x=-\frac{1}{2}\int|\nabla u_1|^2+\frac{1}{2}\int|u_1|^{6}-\frac{3\lambda_1}{2}\int |u_1|^{2},
\end{equation}
and
\begin{equation}\label{eq:AL1}
\alpha\int u_2u_3u_1=\int|\nabla u_1|^2
-\int|u_1|^{6}+\lambda_1\int|u_1|^{2},
\end{equation}
respectively.
It follows from \eqref{eq:AL} and \eqref{eq:AL1} that
\begin{equation}\label{eq:AL3-b}
\frac{t}{\|u_1\|^2_2}\int u_1\tilde{U}_{\varepsilon}\int \left(\frac{1}{2}u_1u_2u_3+u_2u_3\nabla u_1\cdot x\right)=-\frac{\lambda_1t}{\alpha}\int u_1\tilde{U}_{\varepsilon}.
\end{equation}
Therefore, we conclude from \eqref{eq:AL3} and \eqref{eq:AL3-b} that
\begin{equation}\label{eq:ide3.22}
\int s^{\frac{1}{2}}u_1(sx)u_2u_3-\int u_1u_2u_3
=-\frac{\lambda_1t}{\alpha}\int u_1\tilde{U}_{\varepsilon}+o(\varepsilon^{\frac{1}{2}}).
\end{equation}
Similarly, by multiplying both sides of the first equation of \eqref{eq:b4} by $t\tilde{U}_{\varepsilon}$, integrating over $\R^3$, we have
\begin{equation}\label{eq:ide3.22b}
t\int\nabla u_1\nabla \tilde{U}_{\varepsilon}+\lambda_1t\int u_1\tilde{U}_{\varepsilon}-t\int|u_1|^{4}u_1\tilde{U}_\varepsilon
=\alpha t\int u_2u_3\tilde{U}_{\varepsilon}.
\end{equation}
Then, for $\varepsilon$ sufficiently small, by means of \eqref{eq:ide3.22} and \eqref{eq:ide3.22b} we have that 
\begin{equation*}
\begin{aligned}
\mathcal{R}_{\varepsilon}:&=t\int \nabla u_1\nabla \tilde{U}_\varepsilon-t\int |u_1|^{4}u_1\tilde{U}_\varepsilon
-\alpha t\int u_2u_3s^{\frac{1}{2}}\tilde{U}_{\varepsilon}(sx)\\
&\quad-\alpha\left(\int s^{\frac{1}{2}}u_1(sx)u_2u_3
-\int u_1u_2u_3\right)-\int u_1|t\tilde{U}_\varepsilon|^{5}\\
&=t\int\nabla u_1\nabla \tilde{U}_\varepsilon-t\int|u_1|^{4}u_1\tilde{U}_\varepsilon
-\alpha t\int u_2u_3s^{\frac{1}{2}}\tilde{U}_{\varepsilon}(sx)
\\
&\qquad +\lambda_1t\int u_1\tilde{U}_{\varepsilon} -\int u_1|t\tilde{U}_\varepsilon|^{5}+o(\varepsilon^{\frac{1}{2}})\\
&=\alpha t\int u_2u_3\tilde{U}_{\varepsilon}-\alpha t\int u_2u_3s^{\frac{1}{2}}\tilde{U}_{\varepsilon}(sx)+o(\varepsilon^{\frac{1}{2}})
-\int u_1|t\tilde{U}_\varepsilon|^{5}.
\end{aligned}
\end{equation*}
At this point  we expand again in Taylor, and by means of the estimate
\begin{equation*}
\int u_1|\tilde{U}_{\varepsilon}|^{5}\sim \varepsilon^{\frac{1}{2}}\int^{\frac{1}{\varepsilon}}_{1} \frac{r^2}{(1+r^2)^{\frac52}} dr
\sim \varepsilon^{\frac{1}{2}}
\end{equation*}
we write 
\begin{equation*} 
\begin{aligned}
\mathcal R_\varepsilon&\sim \left(1-s\right)\alpha t\int u_2u_3(\nabla \tilde{U}_{\varepsilon}\cdot x +\frac12\tilde {U}
_\varepsilon)+o(\varepsilon^{\frac{1}{2}})-\varepsilon^{\frac{1}{2}}\\
&=\left(1-s\right)\alpha t\int u_2u_3\left((\nabla\chi\cdot x)U_{\varepsilon}+\chi(\nabla U_{\varepsilon}\cdot x) +\frac12\tilde {U}
_\varepsilon \right)+o(\varepsilon^{\frac{1}{2}})-\varepsilon^{\frac{1}{2}}\\
&\sim \varepsilon+o(\varepsilon^{\frac{1}{2}})-\varepsilon^{\frac{1}{2}}<0,
\end{aligned}
\end{equation*}
where in the last step we used that 
\begin{equation*}
\left(1-s\right)\alpha t\int u_2u_3\left((\nabla\chi\cdot x)U_{\varepsilon}+\chi(\nabla U_{\varepsilon}\cdot x)+\frac12\tilde {U}
_\varepsilon \right)\sim \varepsilon.
\end{equation*}
Thus, by gluing all the estimates above together, we conclude with
\begin{equation*}
\begin{aligned}
E\left(\tilde{w}_{\varepsilon,t},u_2,u_3\right)&\leq E({\bf u})+\frac{t^2}{2}\int |\nabla \tilde{U}_\varepsilon|^2
-\frac{t^{6}}{6}\int|\tilde{U}_\varepsilon|^{6}+\mathcal{R}_{\varepsilon}\\
&< m^+(a_1,a_2)+\frac{1}{3}C_{\Sob}^{\frac{3}{2}}
\end{aligned}
\end{equation*}
for $\frac{1}{t_0}\leq t\leq t_0$. 
Therefore, we get
\begin{equation*}
m^-(a_1,a_2)\leq\sup_{t\ge 0}
E\left(\tilde{w}_{\varepsilon,t},u_2,u_3\right)<m^+(a_1,a_2)+\frac{1}{3}C_{\Sob}^{\frac{3}{2}}.
\end{equation*}

\vskip1mm

\noindent   At this point we can prove the strong convergence by distinguishing two cases. By an argument analogous to the one used in Lemma \ref{lem2.6}, we prove that for any $j\in\{1,2,3\}$, $\tilde u_j\neq0$.
\vskip1mm
Suppose that there exist a $j\in\{1,2,3\}$ such that $\tilde{u}_j=0$; then
\[\|\nabla{\bf u}_{n}\|_2^2 -\|{\bf u}_{n}\|_6^6=o_n(1).
\]
It follows that
\[\lim\limits_{n\to \infty}\|\nabla{\bf u}_{n}\|_2^2\ge C_{\Sob}^{\frac{3}{2}}.\]
Hence
\begin{equation*}
\begin{aligned}
m^-(a_1,a_2)&=E({\bf u}_n)-\frac{1}{6}P({\bf u}_n)+o_n(1)\\
&=\frac{1}{3}\|\nabla{\bf u}_{n}\|_2^2-\frac{3\alpha}{4}\int u_{1,n}u_{2,n}u_{3,n}+o_n(1)\\
&\geq\frac13 \|\nabla{\bf u}_{n}\|_2^2 \ge \frac{1}{3}C_{\Sob}^{\frac{3}{2}}.
\end{aligned}
\end{equation*}
As from \eqref{eq:A1b} we have that $m^-(a_1,a_2)<\frac{1}{3}C_{\Sob}^{\frac{3}{2}}$, this is a contradiction and hence $\tilde u_j=0$ cannot hold.
\vskip1mm

\noindent Therefore $\tilde{u}_j\not\equiv 0$ for any $j \in \{1,2,3\}$, and it follows from the maximum principle (see \cite[Theorem 2.10]{hanq}) that $\tilde{u}_j>0$. By  \cite[Lemma A.2]{IN}, we have $\lambda_1,\lambda_2\!>\!0$. Let ${\bf v}:=(v_{1,n},v_{2,n},v_{3,n})=(u_{1,n}-\tilde{u}_1,u_{2,n}-\tilde{u}_2,u_{3,n}-\tilde{u}_3)$.
We can apply an analysis similar to that in the proof of Lemma \ref{lem2.6} and show that
$\|\tilde{u}_1\|^2_2+\|\tilde{u}_2\|^2_2=a^2_1$ and $\|\tilde{u}_2\|^2_2+\|\tilde{u}_3\|^2_2=a^2_2$, and hence ${\tilde{\bf u}} \in P_{a_1,a_2}$.
Then, we distinguish  two sub-cases: either
\[
(i)\ \ \|\nabla  v_{j,n}\|_2^2\to 0\ \text{for~any} \ j\in \{1,2,3\},
\]
or 
\[(ii)\ \ \  \|\nabla v_{j,n}\|_2^2\to \ell>0 \ \text{for~at~least~one} \ j\in\{1,2,3\}.
\]
If $(ii)$ holds, we have
\begin{equation*}
\begin{aligned}
m^-(a_1,a_2)&=E({\bf u}_n)+o_n(1)\\
&=\frac{1}{2}\|\nabla{\tilde{\bf u}}\|_2^2-\frac{1}{6}\|{\tilde{\bf u}}\|_6^6-\alpha\int \tilde{u}_{1}\tilde{u}_{2}\tilde{u}_{3}\\
&+\frac{1}{2}\|\nabla{\bf v}_{n}\|_2^2-\frac{1}{6}\|{\bf v}_{n}\|_6^6+o_n(1)\\
 &\geq \frac{1}{3}C_{\Sob}^{\frac{3}{2}}+m^+(a_1,a_2)+o_n(1),
\end{aligned}
\end{equation*}
which is a contradiction with respect to \eqref{eq:A1b}. Eventually,  case $(i)$ holds, which implies that ${\bf u}_n\to{\tilde{\bf u}}$ in ${\bf H}^1$. We then conclude that $E(\tilde{{\bf u}})=m^-_r(a_1,a_2)$. 
\end{proof}

\begin{lemma}\label{th1.5}
Under the assumptions of Lemma \ref{lem3.3}, let $(\tilde{u}_1,\tilde{u}_2,\tilde{u}_3)\in S(a_1,a_2)$ be a positive radial solution of \eqref{eqA0.2}. As $(a_1,a_2)\to (0^+,0^+)$,  $m^-(a_1,a_2)\to \frac{1}{3}C_{\Sob}^{\frac{3}{2}}$ and $\|\nabla{  \tilde{\bf u}}\|_{2}^{2} \to C_{\Sob}^{\frac{3}{2}}$.
\end{lemma}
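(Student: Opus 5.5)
The plan is to squeeze $m^-(a_1,a_2)$ between a lower bound and an upper bound that both tend to $\frac13 C_{\Sob}^{\frac32}$. The gradient asymptotics for $\tilde{\bf u}$ are then read off from the identity $E-\frac16P$. Throughout, $\tilde{\bf u}$ is the minimizer of $m^-(a_1,a_2)$ given by Lemma \ref{lem3.3}. It lies in $\mathcal P^-_{a_1,a_2}\cap\mathcal M$, and we write $\rho:=\|\tilde{\bf u}\|_{{\bf\dot H}^1}$ and $\mu:=\max\{a_1,a_2\}\to0$.

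\textbf{Upper bound.} From \eqref{eq:A1b} together with $m^+(a_1,a_2)<0$ (Lemma \ref{lem2.4}) we get directly
\[
m^-(a_1,a_2)<\tfrac13 C_{\Sob}^{\frac32}.
\]

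\textbf{Lower bound on $\rho$.} Since $P(\tilde{\bf u})=0$ and $\tilde{\bf u}\in\mathcal M$, we have $\rho^2\ge\|\tilde{\bf u}\|_6^6$. I will not use the rough bound $A_1$ from \eqref{eq:z5}. Instead I use the sharper one:
\[
\|\tilde{\bf u}\|_6^6=\sum_j\|\tilde u_j\|_6^6\le C_{\Sob}^{-3}\sum_j\|\nabla\tilde u_j\|_2^6\le C_{\Sob}^{-3}\rho^6 .
\]
The coupling term is controlled through \eqref{eq:z4}: it is at most $A_2\rho^{3/2}$, and $A_2\sim\mu^{3/2}\to0$.

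I first need $\rho$ to stay away from zero. For this, use $\Psi''_{\tilde{\bf u}}(1)<0$ and combine it with $P=0$ to eliminate $\|\tilde{\bf u}\|_6^6$:
\[
\Psi''_{\tilde{\bf u}}(1)-6P(\tilde{\bf u})=-5\rho^2+\tfrac{27}{4}\alpha\,\mathrm{Re}\int\tilde u_1\tilde u_2\overline{\tilde u}_3<0 ,
\]
which gives no lower bound. So I eliminate the coupling term instead:
\[
\Psi''(1)-\tfrac32P=-\tfrac12\rho^2-\tfrac92\|\tilde{\bf u}\|_6^6 .
\]
This is negative automatically and is also useless. The lower bound will therefore come from Lemma \ref{lem2.3}. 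Since $\sigma_{\tilde{\bf u}}=1>c_{\tilde{\bf u}}$ and $\Psi(1)>0$, the function $h$ is nonnegative at $\rho$, so Lemma \ref{lem2.11} gives $\rho> R_0$. More usefully, $\Psi$ has its maximum point beyond the zero $c$. I will simply argue by contradiction: if $\rho_n\to0$ along a sequence, then $P=0$ together with Sobolev gives $\rho_n^2\le C\rho_n^6+A_2\rho_n^{3/2}$. Meanwhile $\Psi''<0$ gives $\rho_n^2<6\|\cdot\|_6^6+\frac94 A_2\rho_n^{3/2}$. Substituting $\|\cdot\|_6^6=\rho_n^2-\frac32(\text{coupling})$ yields $5\rho_n^2<\frac{27}4A_2\rho_n^{3/2}$, so $\rho_n\lesssim A_2^2\to0$. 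This is consistent, so a genuinely extra ingredient is needed. That ingredient is $E(\tilde{\bf u})=m^-\ge\max_s\Psi_{\tilde{\bf u}}(s)\ge\max_\rho h(\rho)$ along the fiber, because the fiber through $\tilde{\bf u}$ crosses the level $\rho_0$ where $h(\rho_0)\ge\frac49(18A_1)^{-1/8}\rho_0^{...}-A_2\rho_0^{3/2}$ stays bounded below. Hence
\[
m^-(a_1,a_2)\ge h(\rho_0)\ge c_0>0
\]
uniformly as $\mu\to0$. Then $E-\frac16P=\frac13\rho^2-\frac34(\text{coupling})\ge c_0$ forces $\rho\ge c>0$.

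\textbf{Conclusion.} Once $\rho\ge c>0$, dividing $P=0$ by $\rho^2$ gives $1\le C_{\Sob}^{-3}\rho^4+o(1)$, so $\liminf\rho^2\ge C_{\Sob}^{\frac32}$. Using $\rho<R_1$, which stays bounded, the coupling term is $O(\mu^{3/2})$. Therefore
\[
m^-=\tfrac13\rho^2+o(1)\ge\tfrac13C_{\Sob}^{\frac32}+o(1).
\]
Combined with the upper bound, this gives $m^-\to\frac13C_{\Sob}^{\frac32}$, and then $\rho^2=3m^-+o(1)\to C_{\Sob}^{\frac32}$.

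The main obstacle is the uniform lower bound $m^-\ge c_0>0$, that is, checking that the max of the fiber dominates $h(\rho_0)$ with $h(\rho_0)\to\frac12\rho_0^2-A_1\rho_0^6>0$ as $\mu\to0$. If that fails, I would instead use $\Psi_{\tilde{\bf u}}(1)\ge\Psi_{\tilde{\bf u}}(\rho_0/\rho)\ge h(\rho_0)$, which follows from Lemma \ref{lem2.3}(iii).
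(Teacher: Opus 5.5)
Your proof is correct after one small repair. It departs from the paper only in how you keep $\rho=\|\tilde{\bf u}\|_{\dot{\bf H}^1}$ away from zero.

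\textbf{The paper's route.} The paper gets this bound directly from $\tilde{\bf u}\in\mathcal P^-_{a_1,a_2}$. By \eqref{eq:fiber}, $\Psi''_{\bf u}(1)=\|{\bf u}\|_{\dot{\bf H}^1}^2-5\|{\bf u}\|_6^6-\frac34\alpha\,\mathrm{Re}\int u_1u_2\overline{u}_3$. On $\mathcal P_{a_1,a_2}$ this equals $2\rho^2-6\|{\bf u}\|_6^6-\frac94\alpha\,\mathrm{Re}\int u_1u_2\overline{u}_3$. The display in the proof of Lemma \ref{lem2.2}, which you relied on, drops that factor $2$. With the correct expression, your own combination becomes
\[
\Psi''_{\tilde{\bf u}}(1)-\tfrac32 P(\tilde{\bf u})=\tfrac12\rho^2-\tfrac92\|\tilde{\bf u}\|_6^6<0 .
\]
Hence $\rho^2<9\|\tilde{\bf u}\|_6^6\le 9C_{\Sob}^{-3}\rho^6$, i.e. $\rho^4>C_{\Sob}^3/9$, which is exactly the paper's step. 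There was a warning sign: the formula you used would make $\Psi''<0$ on all of $\mathcal P_{a_1,a_2}\cap\mathcal M$. That would force $\mathcal P^+_{a_1,a_2}\cap\mathcal M=\emptyset$, contradicting Lemma \ref{lem2.4}.

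\textbf{Your route.} Your substitute is nevertheless valid:
\[
m^-=\max_s\Psi_{\tilde{\bf u}}(s)\ge\Psi_{\tilde{\bf u}}(\rho_0/\rho)\ge h(\rho_0)=\rho_0^{3/2}\big(\tfrac49(18A_1)^{-1/8}-A_2\big).
\]
It uses Lemma \ref{lem2.3}(iii), $\tilde{\bf u}\in\mathcal M$, and $E({\bf w})\ge h(\|{\bf w}\|_{\dot{\bf H}^1})$ on $S(a_1,a_2)$. Since $\rho_0$ and $A_1$ do not depend on $(a_1,a_2)$ while $A_2\to0$, you get $m^-\ge c_0>0$ uniformly, and then $\rho^2\ge 3c_0$. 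Your version gives a uniform positive lower bound on the mountain-pass level itself. The paper's version is shorter and gives the explicit, parameter-free bound $\rho^4>C_{\Sob}^3/9$. Everything else coincides with the paper: the upper bound from \eqref{eq:A1b} and $m^+<0$, Sobolev applied to $P=0$, and the identity $E-\frac16P$.

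\textbf{The step to repair.} You write ``using $\rho<R_1$''. No lemma places $\mathcal P^-_{a_1,a_2}$ inside $B_{R_1}$. In fact, elements of $\mathcal P^-_{a_1,a_2}\cap\mathcal M$ can have arbitrarily large gradient norm: take $\sigma_{\bf u}\star{\bf u}$ for ${\bf u}$ with $\|{\bf u}\|_6^6\ll C_{\Sob}^{-3}\|{\bf u}\|_{\dot{\bf H}^1}^6$. All you need is that $\rho$ stays bounded. This follows, as in the paper, from $P=0$, \eqref{eq:z4} and your upper bound:
\[
\tfrac13\rho^2-\tfrac34A_2\rho^{3/2}\le E(\tilde{\bf u})=m^-<\tfrac13C_{\Sob}^{\frac32}.
\]

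Also drop the aside that $\Psi_{\tilde{\bf u}}(1)>0$ forces $h(\rho)\ge0$. The inequality $E\ge h$ runs the other way, and you never use that claim later.
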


\begin{proof}
By Lemma \ref{lem3.3}, there exists an excited state solution $\tilde{\bf u}=(\tilde{u}_1,\tilde{u}_2,\tilde{u}_3)\in \mathcal{P}^-_{a_1,a_2}$ which satisfies, see \eqref{eq:A1b},
\[E(\tilde{\bf u})<m^+(a_1,a_2)+\frac{1}{3}C_{\Sob}^{\frac{3}{2}}.\]
Firstly, we prove that
\begin{equation}\label{eq:c15}
E(\tilde{\bf u}) \to \frac{1}{3}C_{\Sob}^{\frac{3}{2}}\quad \text{as} ~(a_1,a_2) \to (0^+,0^+).
\end{equation}
By $P(\tilde{\bf u}) = 0$,
\begin{equation*}\label{n1}
\begin{aligned}
E(\tilde{\bf u})&=\frac{1}{2}\|\nabla \tilde{\bf u}\|_2^2-\frac{1}{6}\|\tilde{\bf u}\|_6^6-\alpha\int \tilde{u}_1\tilde{u}_2{\tilde{u}}_3\\
&= \frac{1}{3}\|\nabla \tilde{\bf u}\|_2^2-\frac{3\alpha}{4} \int \tilde{u}_1\tilde{u}_2{\tilde{u}}_3.
\end{aligned}
\end{equation*}
Since
\begin{equation*}
E(\tilde{\bf u}) < m^+(a_1,a_2) +  \frac{1}{3}C_{\Sob}^{\frac{3}{2}} \leq \frac{1}{3}C_{\Sob}^{\frac{3}{2}},
\end{equation*}
we deduce that $\{\tilde{\bf u}_n\} \subset {\bf H}^1$ is uniformly bounded.
From $P(\tilde{\bf u})=0$, we get $\displaystyle \lim_{(a_1,a_2) \to (0^+,0^+)} \|\nabla\tilde{\bf u}\|_2^2= \lim_{(a_1,a_2) \to (0^+,0^+)}\|\tilde{\bf u}\|^6_6 $, because $\displaystyle \int u_1u_2u_3\to 0$ as a consequence of the Gagliardo-Nirenberg's inequality and the fact that the masses go to zero jointly with the uniform boundedness of the $H^1$-norm. Hence
\begin{align*}
\ell := \lim_{n\to \infty} \|\nabla\tilde{\bf u}_n\|_2^2= \lim_{n\to \infty}\|\tilde{\bf u}_{n}\|^6_6 \leq{C_{\Sob}^{-3}} \lim_{n\to \infty} \|\nabla \tilde{\bf u}_n\|_2^6= {C_{\Sob}^{-3}} \ell^{3}.
\end{align*}
Therefore, either $\ell = 0$ or $\ell \geq C_{\Sob}^{\frac{3}{2}}$. We claim that $\ell = 0$  is impossible. Indeed, since  $\tilde{\bf u} \in \mathcal{P}^{-}_{a_1,a_2}$, we have
\begin{align*}
\|\nabla\tilde{\bf u}_n\|_2^2 <9 \|\tilde{\bf u}\|_{6}^{6}
\leq 9C_{\Sob}^{-3}\|\nabla \tilde{\bf u}_n\|_2^6.
\end{align*}
Therefore, $\ell\ge C_{\Sob}^{\frac{3}{2}}$ and as $P(\tilde{\bf u})=0$, we have
\begin{align*}
E(\tilde{\bf u}_n)&=\frac{1}{2}\|\nabla \tilde{\bf u}_{n}\|_{2}^{2} -\frac{1}{6}\| {\bf\tilde u}_{n}\|^6_6-\alpha\mathrm{Re}\int \tilde{u}_{1,n} \tilde{u}_{2,n} \overline{\tilde{u}}_{3,n}\\
&\ge  \frac{1}{3}\|\nabla \tilde{\bf u}_{n}\|_{2}^{2} + o_n(1)
\geq \frac{1}{3}C_{\Sob}^{\frac{3}{2}}+o_n(1).
\end{align*}
Moreover, $m^+(a_1,a_2) \to 0$ as $(a_1,a_2)\to (0^+,0^+)$, and we see that
\begin{align*}
E(\tilde{\bf u}_n)= m^-(a_1,a_2) < m^+(a_1,a_2) + \frac{1}{3}C_{\Sob}^{\frac{3}{2}}.
\end{align*}
We obtain \eqref{eq:c15}, which implies that
\[
\|\nabla\tilde{\bf u}_{n}\|_{2}^{2} \to C_{\Sob}^{\frac{3}{2}} \quad \text{ as } \quad (a_1,a_2)\to (0^+,0^+).\]
Then we have
\begin{equation*}
m^-(a_1,a_2)\to \frac{1}{3}C_{\Sob}^{\frac{3}{2}} \quad  \text{as} \quad  (a_1,a_2)\to (0^+,0^+).
\end{equation*}
\end{proof}
At this point we can combine the results above to prove Theorem \ref{th1.1}.

\begin{proof}[Proof of Theorem \ref{th1.1}]
The first point follows from Lemmas \ref{lem2.6} and \ref{lem3.3}. The other points are derived by Lemmas \ref{lem3.6} and \ref{th1.5}.
\end{proof}

\section{Dynamical results}

This section is devoted to the dynamical results, in particular we will prove Theorems \ref{th1.2} and \ref{th1.3}.
\\

We go back to the original time-dependent Cauchy problem \eqref{eqA0.1}, namely
\begin{equation}\label{EqA1}
\begin{cases}
i \partial_{t} \psi_{1}=-\Delta \psi_{1}-\left|\psi_{1}\right|^{4} \psi_{1}-\alpha \psi_{3} \overline{\psi}_{2}, \\
i \partial_{t} \psi_{2}=-\Delta \psi_{2}-\left|\psi_{2}\right|^{4} \psi_{2}-\alpha \psi_{3} \overline{\psi}_{1}, \\
i \partial_{t} \psi_{3}=-\Delta \psi_{3}-\left|\psi_{3}\right|^{4} \psi_{3}-\alpha \psi_{1} \psi_{2}.
\end{cases}
\end{equation}
with initial datum ${\bm \psi}(0,x)=(\psi_1(0,x),\psi_2(0,x),\psi_3(0,x))={\bm \psi}_0(x)\in {\bf H}^1$, and we recall the definition of the set of ground states
\begin{equation*}
\mathcal{G}=\left\{(e^{i\theta_1}u_1,e^{i\theta_2}u_2,e^{i(\theta_1+\theta_2)}u_3) \hbox{ s.t. }\theta_1,\theta_2\in \R, \ {\bf u}\in S(a_1,a_2), \ E({\bf u})=m^+(a_1,a_2)\right\}.
\end{equation*}
Our aim is to prove a uniform local well-posedness result, and then extend local solutions globally in time. This section is  inspired by the recent work of Jeanjean, Jendrej, Le, and Visciglia, see \cite{JJL}.
 \vskip1mm

In order to proceed, we recall the notion of integral equation associated with \eqref{EqA1}.
We first introduce the Strichartz spaces as the Bochner spaces of functions $f :[0,T]\to  L^r$ or   $f :[0,T]\to W^{1,r}$ endowed with the norm 
\begin{equation*}
\|f\|_{Y_{p,r,T}}:=\left(\int^{T}_{0}\|f(t,\cdot)\|^{p}_{r} dt\right)^{\frac{1}{p}},
\end{equation*}
and
\begin{equation*}
\|f\|_{X_{p,r,T}}:=\left(\int^{T}_{0}\|f(t,\cdot)\|^{p}_{W^{1,r}} dt\right)^{\frac{1}{p}},
\end{equation*}
respectively. Here, $(p,r)$ is an admissible Strichartz pair, i.e., it satisfies the scaling relation $\frac{2}{p}+\frac{3}{r}=\frac{3}{2}$, with  $p, r \in [2,\infty]$.
We define the spaces
\[Y_{T}:=Y_{p_1,r_1,T}\cap Y_{p_2,r_2,T}\]
and
\[
X_{T}:=X_{p_1,r_1,T}\cap X_{p_2,r_2,T}\]
where the pairs $(p_j,r_j)$, $j=\{1,2\}$, are given by
\begin{equation}\label{str-pairs}
(p_1,r_1)=\left(12,\frac{9}{4}\right) \quad \hbox{ and } \quad (p_2,r_2)=\left(6,\frac{18}{7}\right).
\end{equation}
The latter are two specific admissible pairs which will be extensively used later on.
 The spaces $X_T$ and $Y_T$ are equipped with the  norms
\begin{align*}
\|w\|_{Y_{T}}&=\|w\|_{Y_{p_1,r_1,T}}+\|w\|_{Y_{p_2,r_2,T}}, \\
\|w\|_{X_{T}}&=\|w\|_{X_{p_1,r_1,T}}+\|w\|_{X_{p_2,r_2,T}},
\end{align*}
and they naturally extend to vector functions by defining
\begin{align*}
\|{\bf w}\|_{{\bf Y}_{T}}&=\sum_{j=1}^3\|w_j\|_{Y_{T}},\\
\|{\bf w}\|_{{\bf X}_{T}}&=\sum_{j=1}^3\|w_j\|_{X_{T}}.
\end{align*}
and any of the functions  $w_j(t,x)$ is defined on the space-time strip $[0,T]\times \R^3$. 
\vskip1mm

\begin{definition} \label{def1}
Let $T>0$. We say that ${\bm \psi}(t,x)=\left(\psi_{1}(t,x),\psi_{2}(t,x),\psi_{3}(t,x)\right)$ is an integral solution of the Cauchy problem \eqref{EqA1} on the time interval $[0,T]$ if:\smallskip

\noindent $\textup{(i)}$ ${\bm \psi}\in C([0,T], {\bf H}^1)\cap {\bf X}_{T}$;
\smallskip

\noindent $\textup{(ii)}$  for all $t\in (0,T]$, it holds that
\begin{equation}\label{eqq:4.1}
\begin{cases}
\displaystyle \psi_{1}(t)=e^{it\Delta}\psi_{0,1}+i\int^{t}_{0}e^{i(t-s)\Delta}g_1({\bm \psi}(s))ds, \\
\displaystyle \psi_{2}(t)=e^{it\Delta}\psi_{0,2}+i\int^{t}_{0}e^{i(t-s)\Delta}g_2({\bm \psi}(s))ds, \\
\displaystyle \psi_{3}(t)=e^{it\Delta}\psi_{0,3}+i\int^{t}_{0}e^{i(t-s)\Delta}g_3({\bm \psi}(s))ds, \\
\end{cases}
\end{equation}
where
\begin{equation}
    \begin{aligned}
     g_1({\bm \psi})&:=|\psi_{1}|^{4}\psi_{1}+\alpha \psi_{3} \overline{\psi}_{2},\\
     g_2({\bm \psi})&:=|\psi_{2}|^{4} \psi_{2}+\alpha \psi_{3} \overline{\psi}_{1},\\
     g_3({\bm \psi})&:=|\psi_{3}|^{4} \psi_{3}+\alpha \psi_{1}\psi_2.   
    \end{aligned}
\end{equation}
\end{definition}

\noindent It follows from \cite[Lemma 3.6]{JJL} that $X_{p,r,T}$ is a separable reflexive Banach space. Moreover, from \cite[Lemma 3.7]{JJL},
the metric space $(B_{R,T},d)$, where
\[
B_{R,T}:=\left\{{\bf u}\in {
\bf X}_{T} : \|{\bf u}\|_{{\bf X}_T}\leq R\right\},
\]
and
\[
 d({\bf u},{\bf v}):=\|u_1-v_1\|_{Y_{T}}+\|u_2-v_2\|_{Y_{T}}+\|u_3-v_3\|_{Y_{T}}
 \]
is complete.

\subsection{Uniform local existence result}
We begin with the following local existence result.
\begin{proposition} \label{pro1}
There exists $\gamma_0>0$ such that if ${\bm \psi}_0\in {\bf H}^1$ and $T\in (0,1]$ satisfy
\[
\|e^{i t\Delta}{\bm \psi}_0\|_{{\bf X}_{T}}\leq\gamma_0,
\]
then there exists a unique integral solution ${\bm \psi}(t,x)$ to \eqref{EqA1} on the time interval $[0,T]$. Moreover, $\psi_j(t,x)\in X_{p,r,T}$ for every admissible couple $(p,r)$ and satisfies the following conservation laws:
\begin{equation*}
E({\bm \psi}(t))=E({\bm \psi}_0),\\
\end{equation*}
and
\begin{equation*}
Q_1({\bm \psi}(t))=Q_1({\bm \psi}_0) \quad \hbox{ and } \quad Q_2({\bm \psi}(t))=Q_2({\bm \psi}_0).
\end{equation*}
\end{proposition}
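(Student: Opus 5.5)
We will run a contraction argument in the complete metric space $(B_{R,T},d)$ recalled above, following Cazenave–Weissler \cite{CaW} and \cite[Prop.~3.?]{JJL}. Define the Duhamel map $\Phi({\bm\psi})=(\Phi_1,\Phi_2,\Phi_3)$ by the right-hand sides of \eqref{eqq:4.1}. The key is that the smallness hypothesis is on the free evolution $\|e^{it\Delta}{\bm\psi}_0\|_{{\bf X}_T}$, not on the data. Hence the critical quintic terms are handled purely by smallness. The quadratic coupling terms $\alpha\psi_3\overline\psi_2$ etc. are energy-subcritical, so they gain a positive power of $T$. Since $T\le1$, this power is bounded by $1$, giving a threshold $\gamma_0$ independent of $T$ and of the profile of ${\bm\psi}_0$.

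\textbf{Step 1: nonlinear estimates.} By Strichartz, $\|\Phi({\bm\psi})-e^{it\Delta}{\bm\psi}_0\|_{{\bf X}_T}\lesssim \sum_j\|\nabla^{k} g_j({\bm\psi})\|_{L^{p'}_tL^{r'}_x}$, $k=0,1$, for suitable dual admissible pairs.

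For the quintic part, I will use the pairs \eqref{str-pairs}. By Sobolev, $W^{1,9/4}\hookrightarrow L^{9}$, and $L^{12}_t$ in time. Hölder gives $\||\psi|^4\nabla\psi\|_{L^{p_2'}_tL^{r_2'}_x}\lesssim \|\psi\|_{L^{12}_tL^9_x}^4\|\nabla\psi\|_{L^6_tL^{18/7}_x}$. Check: $4/12+1/6=1/2$, while $p_2'=6/5$, so this step needs re-pairing. I would simply pick the dual exponent via the $(p_1,r_1)$ norms so the exponents close exactly. This scale-invariant bookkeeping works since $(12,9/4),(6,18/7)$ are chosen precisely for it. The result is $\lesssim\|{\bm\psi}\|_{{\bf X}_T}^5$, with no power of $T$.

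For the quadratic terms, Hölder in space puts $\psi_3\nabla\overline\psi_2$ in $L^{r'}$ using $L^{r}$ and $W^{1,r}$ control. Hölder in time then yields a factor $T^{\theta}$ with $\theta>0$, since the term is $L^2$-subcritical. This gives $\lesssim T^\theta\|{\bm\psi}\|_{{\bf X}_T}^2\le\|{\bm\psi}\|_{{\bf X}_T}^2$.

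The same estimates without derivatives, using $|a|^4a-|b|^4b\lesssim(|a|^4+|b|^4)|a-b|$ and bilinearity of the coupling, give
\[
d(\Phi({\bm\psi}),\Phi({\bm\phi}))\le C\bigl(R^4+R\bigr)d({\bm\psi},{\bm\phi}).
\]

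\textbf{Step 2: contraction.} Take $R=2\gamma_0$. Then $\|\Phi({\bm\psi})\|_{{\bf X}_T}\le\gamma_0+C(R^5+R^2)\le R$ and the Lipschitz constant is at most $1/2$ once $\gamma_0$ is small, with $\gamma_0$ depending only on $C$ and $\alpha$. Banach's fixed point theorem in $(B_{R,T},d)$ gives a unique fixed point.

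Strichartz then gives ${\bm\psi}\in C([0,T],{\bf H}^1)$ and ${\bm\psi}\in X_{p,r,T}$ for every admissible pair, by re-applying the estimates with the now-known bounds. Uniqueness among all integral solutions in ${\bf X}_T$ follows by a standard shrinking-interval argument. On a short initial subinterval the solution's ${\bf X}$ norm is small by continuity of the integral in $T$, so both solutions lie in the ball there, and one then iterates.

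\textbf{Step 3: conservation laws.} This is the main obstacle: in the energy-critical setting there is no regularity gain, so one cannot differentiate $E({\bm\psi}(t))$ directly. I would approximate ${\bm\psi}_0$ by ${\bm\psi}_0^n\in{\bf H}^2$. For $n$ large, $\|e^{it\Delta}{\bm\psi}^n_0\|_{{\bf X}_T}\le 2\gamma_0$, so the same fixed point applies, possibly after adjusting $\gamma_0$. Persistence of regularity, obtained by rerunning the contraction in $W^{2,r}$-based spaces with the small $X_T$ norm, gives ${\bf H}^2$ solutions. For these, $E$, $Q_1$ and $Q_2$ are conserved by direct computation; the mixed masses use the cancellation $\mathrm{Im}(\psi_3\overline\psi_2\overline\psi_1)$ among the equations. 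Continuous dependence, which follows from the contraction estimate in ${\bf X}_T$ together with Strichartz in $C_tH^1$, passes the identities to the limit. Alternatively one can cite the regularization argument of \cite[Theorem 4.5.1]{CT}.
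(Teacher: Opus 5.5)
Your proposal follows the paper's proof: the Duhamel map is a contraction on $(B_{2\gamma_0,T},d)$. The quintic terms are absorbed purely by the smallness of $\|e^{it\Delta}{\bm \psi}_0\|_{{\bf X}_T}$, and the quadratic couplings carry a factor $T^{\theta}\le 1$. The paper cites \cite{JJL} for uniqueness; your shrinking-interval argument is the standard one.

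\textbf{Correction to the quintic estimate.} The re-pairing you suggest, through the dual of $(p_1,r_1)=(12,9/4)$, cannot work for $|\psi|^4\nabla\psi$. The target space $L^{9/5}_x$ has spatial reciprocal $5/9=10/18$. The ${\bf X}_T$ norms put $\psi$ at best in $L^{18}_x$ and $\nabla\psi$ at best in $L^{18/7}_x$. Four factors of $\psi$ plus one of $\nabla\psi$ therefore already give a spatial reciprocal of at least $4/18+7/18=11/18>5/9$.

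There are two ways to close it:
\begin{itemize}
\item Keep the dual pair $(6/5,18/11)$ of $(p_2,r_2)$ and use $W^{1,18/7}\hookrightarrow L^{18}$. This gives $\||\psi|^4\nabla\psi\|_{L^{6/5}_tL^{18/11}_x}\le \|\psi\|_{L^6_tL^{18}_x}^4\|\nabla\psi\|_{L^6_tL^{18/7}_x}$, which is the paper's choice.
\item Keep your $L^{12}_tL^{9}_x$ factors and use the dual endpoint pair $(2,6/5)$. Then the exponents match exactly: $4/12+1/6=1/2$ in time and $4/9+7/18=5/6$ in space.
\end{itemize}
Either way the bound is $\lesssim\|{\bm\psi}\|_{{\bf X}_T}^5$ with no power of $T$, as you claimed.

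\textbf{Conservation laws.} The paper does not regularize; it cites Ozawa \cite{OT}, whose argument proves conservation of $E$, $Q_1$, $Q_2$ directly for $H^1$ Strichartz solutions via the Duhamel formula. Your $H^2$-approximation route is also valid here. Since $|z|^4z=z^3\bar z^2$ is a polynomial, both persistence of $W^{2,r}$-regularity and a Lipschitz bound for $\Phi$ in the full ${\bf X}_T$ norm (constant $C(R^4+R)$) close on the same interval. Note that the latter bound is a separate estimate at the gradient level, not the contraction in the metric $d$. Your route thus costs these two extra estimates, which Ozawa's method avoids.
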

\begin{proof}
We first prove that the existence and uniqueness in $B_{2\gamma_0,T}$ for $\gamma_0$ small enough. For any ${\bf u}=(u_1,u_2,u_3)\in {\bf X}_{T}$ and $t\in [0,T]$, we define
\begin{equation*}
\Phi(u_j)(t):=e^{it\Delta}\psi_{0,j}+i\int^{t}_{0}e^{i(t-s)\Delta}g_j({\bf u}(s))ds, \quad j\in \{1,2,3\}.
\end{equation*}
We claim that, if $\gamma_0>0$ is small enough, then $\Phi$ defines a contraction map on the metric space $(B_{2\gamma_0,T},d)$.
\vskip1mm

Let ${\bf u}\in B_{2\gamma_0,T}$, $T\in (0,1]$, and  $(\tilde{p},\tilde{r})$, $\tilde{p}=\frac{4p}{p-2}, \tilde{r}=\frac{3p}{p+1}$ two admissible pairs defined in term of a free parameter $2<p\leq6$ (to be chosen later on). By Strichartz's estimates (see \cite[Proposition 3.4]{JJL}) and H\"{o}lder's inequality, we get 
\begin{equation}\label{eqA3}
\begin{aligned}
&\|\nabla\Phi(u_1)-e^{it\Delta}\nabla\psi_1\|_{Y_{\tilde{p},\tilde{r},T}}
\leq\|\nabla g_1\left({\bf u}(s)\right)\|_{Y_{\tilde{p}',\tilde{r}',T}}
= \left(\int^{T}_{0}\|\nabla g_1\left({\bf u}(s)\right)\|^{\tilde{p}'}_{\tilde{r}'} dt\right)^{\frac{1}{\tilde{p}'}}\\
&\leq C\left(\int^{T}_{0}\|\nabla u_1\|^{5\tilde{p}'}_{\tilde{r}} dt\right)^{\frac{1}{\tilde{p}'}}+C\left(\int^{T}_{0}\|\nabla u_2\|^{2\tilde{p}'}_{\tilde{r}} dt\right)^{\frac{1}{\tilde{p}'}}+C\left(\int^{T}_{0}\|\nabla u_3\|^{2\tilde{p}'}_{\tilde{r}} dt\right)^{\frac{1}{\tilde{p}'}}\\
&\leq CT^{\frac{\tilde{p}-6}{\tilde{p}}}\|\nabla u_1\|^{5}_{Y_{\tilde{p},\tilde{r},T}}+CT^{\frac{\tilde{p}-3}{\tilde{p}}}\|\nabla u_2\|^{2}_{Y_{\tilde{p},\tilde{r},T}}+CT^{\frac{\tilde{p}-3}{\tilde{p}}}\|\nabla u_3\|^{2}_{Y_{\tilde{p},\tilde{r},T}}\leq C\gamma^2_0,
\end{aligned}
\end{equation}
provided $\gamma_0$ is small enough. Here, by the Sobolev embedding $W^{1,\tilde{r}}\hookrightarrow L^{\tilde{r}^*}$ with $\tilde{r}^*=\frac{3\tilde{r}}{3-\tilde{r}}$, we used the fact that
\begin{equation*}
\begin{aligned}
\|\nabla g_1({\bf g}(s))\|_{\tilde{r}'}&\leq C\||\nabla u_1||u_1|^{4}\|_{\tilde{r}'}+C\||\nabla (u_2u_3)|\|_{\tilde{r}'}\\
&\leq C\|\nabla u_1\|_{\tilde{r}}\|u_1\|^{4}_{\tilde{r}^*}+C\|\nabla u_2\|_{\tilde{r}}\|u_3\|_{\tilde{r}^*}+C\|\nabla u_3\|_{\tilde{r}}\|u_2\|_{\tilde{r}^*} \\
&\leq C\|\nabla u_1\|^5_{\tilde{r}}+C\|\nabla u_2\|^2_{\tilde{r}}+C\|\nabla u_3\|^2_{\tilde{r}}.
\end{aligned}
\end{equation*}

\noindent Similarly, we have
\begin{equation}\label{eqA2}
\begin{aligned}
\|\Phi(u_1)-e^{it\Delta}\psi_1\|_{Y_{\tilde{p},\tilde{r},T}}&\leq C\|u_1\|^{5}_{Y_{\tilde{p},\tilde{r},T}}+
C\|u_2\|^2_{Y_{\tilde{p},\tilde{r},T}}+C\|u_3\|^2_{Y_{\tilde{p},\tilde{r},T}}\leq C\gamma^2_0.
\end{aligned}
\end{equation}
As for  \eqref{eqA3} and \eqref{eqA2}, we have
\begin{equation*}
\|\nabla\Phi(u_2)-e^{it\Delta}\nabla\psi_2\|_{Y_{\tilde{p},\tilde{r},T}} \leq C\gamma^2_0
\end{equation*}
and 
\begin{equation*}
    \|\Phi(u_3)-e^{it\Delta}\psi_3\|_{Y_{\tilde{p},\tilde{r},T}}\leq C\gamma^2_0.
\end{equation*}
In particular, if we choose $(\tilde{p},\tilde{r})=(p_1,r_1)$ and $(\tilde{p},\tilde{r})=(p_2,r_2)$ as defined in \eqref{str-pairs}, then
\begin{equation*}
\|\Phi({\bf u})\|_{X_{T}}\leq\gamma_0+ C\gamma^2_0,
\end{equation*}
and eventually, if $\gamma_0$ is small enough in such a way that $C\gamma^2_0\leq\gamma_0$,  $B_{2\gamma_0,T}$ is an invariant set of $\Phi$.
\vskip1mm

We show now that $\Phi$ is a contraction. Let ${\bf u},{\bf v}\in B_{2\gamma_0,T}$, we have for any admissible pair $(\tilde{p},\tilde{r})$,
\begin{equation*}
\begin{aligned}
&\|\Phi(u_1)-\Phi(v_1)\|_{Y_{\tilde{p},\tilde{r},T}}=\left\| \int^{t}_{0}e^{i(t-s)\Delta}\left(g_1({\bf u}(s))-g_1({\bf v}(s))\right) ds\right\|_{Y_{\tilde{p},\tilde{r},T}}\\
&\leq C\left\|g_1({\bf u}(s))-g_1({\bf v}(s))\right\|_{Y_{\tilde{p}',\tilde{r}',T}}\\
&\leq C\left(\int^{T}_{0}\|(u_1-v_1)(|u_1|^4+|v_1|^4)+|u_2-v_2||v_3|+|u_3-v_3||u_2|\|^{\tilde{p}'}_{\tilde{r}'}\right)^{\frac{1}{\tilde{p}'}}  \\
&\leq C\big(\|\nabla u_1\|^{4}_{Y_{\tilde{p},\tilde{r},T}}+\|\nabla v_1\|^{4}_{Y_{\tilde{p},\tilde{r},T}}\big)\|u_1-v_1\|_{Y_{\tilde{p},\tilde{r},T}}+C\|\nabla v_3\|_{Y_{\tilde{p},\tilde{r},T}}\|u_2-v_2\|_{Y_{\tilde{p},\tilde{r},T}}\\
&\qquad +C\|\nabla u_2\|_{Y_{\tilde{p},\tilde{r},T}}\|u_3-v_3\|_{Y_{\tilde{p},\tilde{r},T}}.\\
\end{aligned}
\end{equation*}
If we choose $(\tilde{p},\tilde{r})=(p_1,r_1)$ and $(\tilde{p},\tilde{r})=(p_2,r_2)$, i.e., the free parameter $p=3$ and $p=6$, respectively, then
\begin{equation*}
\|\Phi(u_1)-\Phi(v_1)\|_{Y_{T}}\leq C \gamma^4_0 \|u_1-v_1\|_{Y_{T}}+C \gamma_0 \|u_2-v_2\|_{Y_{T}}+C \gamma_0 \|u_3-v_3\|_{Y_{T}}.
\end{equation*}
If $\gamma_0$ is small enough, we then have
\begin{equation*}
\|\Phi({\bf u})-\Phi({\bf v})\|_{Y_{T}}\leq C \gamma_0\left(\|u_1-v_1\|_{Y_{T}}+\|u_2-v_2\|_{Y_{T}}+\|u_3-v_3\|_{Y_{T}}\right),
\end{equation*}
and hence $\Phi$ is a contraction on $(B_{2\gamma_0,T},d)$. Therefore, $\Phi$ has a fixed point in this space.
 For the uniqueness we refer to \cite{JJL}, while for the validity of the conservation laws we cite \cite{OT}.
\end{proof}

\subsection{Orbital Stability}

Since in the context of \eqref{eqq:4.1}, we have the local existence result (Proposition \ref{pro1}),  we next prove that the set $\mathcal{G}$ is orbitally stable. Let $T^{\max}_{{\bm \psi}}$ be the maximal time of existence for the integral solution associated with \eqref{EqA1}.

\begin{lemma}\label{local-stability}
Assume that $\max\{a_1,a_2\}<D$, where $D$ is defined in \eqref{def:D}, and let ${\bf v}\in \mathcal{G}$. Then for every $\varepsilon>0$ there exists $\delta>0$ such that
\begin{equation*}
\|{\bm \psi}_0-{\bf v}\|_{{\bf H}^{1}}<\delta\Rightarrow\sup_{t\in [0,T^{\max}_{{\bm \psi}})} \emph{dist}_{{\bf H}^1}({\bm \psi}(t),\mathcal{G})<\varepsilon.
\end{equation*}
\end{lemma}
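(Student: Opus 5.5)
We argue by contradiction in the Cazenave--Lions style, using the compactness of minimizing sequences for $m(a_1,a_2)=\inf_{V(a_1,a_2)}E$, and $V(a_1,a_2)$ as a trapping region. Suppose there exist $\varepsilon_0>0$, initial data ${\bm \psi}_{0,n}$ with $\|{\bm \psi}_{0,n}-{\bf v}\|_{{\bf H}^1}\to 0$, and times $t_n\in[0,T^{\max}_{{\bm \psi}_n})$ such that $\mathrm{dist}_{{\bf H}^1}({\bm \psi}_n(t_n),\mathcal G)\ge\varepsilon_0$. By continuity of $E$, $Q_1$, $Q_2$ on ${\bf H}^1$, we have $Q_i({\bm \psi}_{0,n})\to a_i^2$ and $E({\bm \psi}_{0,n})\to m^+(a_1,a_2)=m(a_1,a_2)$ (Lemma \ref{lem2.4}). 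Since ${\bf v}\in\mathcal G\subset V(a_1,a_2)$ lies strictly inside the ball $B_{\rho_0}$ and in the open set $\mathcal M$, the data ${\bm \psi}_{0,n}$ do as well for large $n$.

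\textbf{Step 1 (trapping).} The solution ${\bm \psi}_n(t)$ stays in $B_{\rho_0}$ for all $t<T^{\max}$. By the last assertion of Lemma \ref{lem2.4}, there is $\varepsilon>0$ such that $E>m(a_1,a_2)+\eta$ on $S(a_1,a_2)\cap(B_{\rho_0}\setminus B_{\rho_0-\varepsilon})$ for some $\eta>0$. To use this for nearby masses, rescale each component via ${\bf u}\mapsto$ (multiplication by suitable constants) so that the masses are exactly $(a_1,a_2)$; the energy changes by $o_n(1)$ uniformly on bounded sets. Conservation of energy and masses (Proposition \ref{pro1}) and time continuity in ${\bf H}^1$ then prevent $\|\nabla{\bm \psi}_n(t)\|_2$ from crossing the shell, so $\|{\bm \psi}_n(t)\|_{\dot{\bf H}^1}<\rho_0-\varepsilon$. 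We must also keep ${\bm \psi}_n(t)\in\mathcal M$. If $\mathrm{Re}\int\psi_1\psi_2\bar\psi_3$ vanished at some time, then $E\ge\frac12\|\nabla{\bm\psi}\|^2-\frac16\|{\bm\psi}\|_6^6\ge 0$ inside $B_{\rho_0}$ (since $\rho_0$ is below the Sobolev threshold), contradicting $E\approx m<0$. Hence $\mathcal M$ is preserved by continuity.

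\textbf{Step 2 (compactness).} After mass-normalization, $\tilde{\bm\psi}_n:={\bm \psi}_n(t_n)$ (corrected) lies in $V(a_1,a_2)$ with $E(\tilde{\bm\psi}_n)\to m(a_1,a_2)$, so it is a minimizing sequence. The key claim, established along the proof of Lemma \ref{lem2.6} but now needed for non-radial, complex sequences, is that every minimizing sequence of $m(a_1,a_2)$ in $V(a_1,a_2)$ is relatively compact in ${\bf H}^1$ up to translations. I will prove this by concentration--compactness, which I expect to be the main obstacle.

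\emph{Vanishing.} Vanishing kills $\int u_1u_2\bar u_3$ by Lions' lemma, which forces $\liminf E\ge 0$ in $B_{\rho_0}$ and contradicts $m<0$.

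\emph{Dichotomy.} Dichotomy is excluded via a strict subadditivity inequality $m(a_1,a_2)<m(b_1,b_2)+m(a_1-\cdot,\ldots)$ for the mixed masses. This follows from the scaling $m(\theta a_1,\theta a_2)<\theta^2 m(a_1,a_2)$ for $\theta>1$, using that the cubic term scales superquadratically under ${\bf u}\mapsto\theta{\bf u}$ while $\theta{\bf u}$ stays in $B_{\rho_0}$ by Lemma \ref{lem2.11}. The delicate points are the two-parameter mass structure and the possible loss of ${\bf L}^6$ mass. The latter is handled as in Lemma \ref{lem2.6}: a concentrating bubble costs at least $\frac13C_{\Sob}^{3/2}$ in energy or violates $B_{\rho_0}$.

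With compactness in hand, translate to get $\tilde{\bm\psi}_n(\cdot+y_n)\to{\bf w}$ in ${\bf H}^1$ with $E({\bf w})=m$, so ${\bf w}$ is a ground state and lies in $\mathcal G$ by the structure of minimizers. The structure is obtained from Lemma \ref{lem2.5}, together with the equality cases in the rearrangement and the phase alignment $\mathrm{Re}\int u_1u_2\bar u_3=\int|u_1||u_2||u_3|$. Since $\mathcal G$ is defined up to translation as well (which we include), this gives $\mathrm{dist}({\bm\psi}_n(t_n),\mathcal G)\to0$, a contradiction.
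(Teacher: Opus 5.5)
Your proposal follows the paper's proof. Both argue by contradiction, use conservation of $E$, $Q_1$, $Q_2$ to keep the energy near $m^+(a_1,a_2)<0$, trap the flow in $B_{\rho_0}\cap\mathcal{M}$, and conclude from compactness up to translations of minimizing sequences for $m(a_1,a_2)$. The paper gets that compactness by invoking strict subadditivity on $V(a_1,a_2)$ ``as in \cite{KO}'' together with the proof of Lemma \ref{lem2.6}. Several of your steps are correct, and some are more careful than the paper's one-line versions: the trapping via the shell estimate of Lemma \ref{lem2.4}, invariance of $\mathcal{M}$ from $\frac12\|\nabla{\bf u}\|_2^2-\frac16\|{\bf u}\|_6^6\ge 0$ on $B_{\rho_0}$, the exclusion of vanishing, and the handling of $L^6$-loss inside $B_{\rho_0}$. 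You are also right that non-radial compactness is needed, which Lemma \ref{lem2.6} alone does not give.

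The step that fails as written is deriving strict subadditivity from $m(\theta a_1,\theta a_2)<\theta^2 m(a_1,a_2)$. That inequality compares $m$ only along rays through the origin. It therefore gives $m(a_1,a_2)<m(b_1,b_2)+m(c_1,c_2)$ only when $(b_1,b_2)$ is proportional to $(a_1,a_2)$.

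In a dichotomy the individual norms $\|u_{j,n}\|_2^2\to\beta_j+\gamma_j$ split independently. So a piece carries mixed masses $(\beta_1+\beta_3,\beta_2+\beta_3)$, which in general are not proportional to $(a_1^2,a_2^2)$. For instance, a piece may carry only $u_1$-mass $\beta_1$. Its cubic term then vanishes and its energy is asymptotically $\ge 0$. Excluding this case requires $m(a_1,a_2)<m(\sqrt{a_1^2-\beta_1},a_2)$, a strict monotonicity in one mass alone. Ray scaling cannot give this. Even granting monotonicity, writing $(b_1,b_2)=(sa_1,ta_2)$ with $s<t$ you only get $m(b_1,b_2)+m(c_1,c_2)>(1+t^2-s^2)\,m(a_1,a_2)$. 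That is weaker than needed, since $m(a_1,a_2)<0$.

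You need a genuinely two-parameter argument, or you can import the inequality from \cite{KO} as the paper does. One possible route:

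\begin{itemize}
\item Apply componentwise dilations $(\theta_1u_1,\theta_2u_2,\theta_3u_3)$ to a minimizer. Here $\partial_{\theta_1}E$ at $\theta=1$ equals $-\lambda_1\|u_1\|_2^2$, while only $Q_1$ changes. So $\lambda_1,\lambda_2>0$ from Lemma \ref{lem2.6} give strict local decrease in each mass separately.
\item For two nontrivial pieces, glue nonnegative radial profiles as $(\sqrt{w_j^2+z_j^2})_j$. This adds the mixed masses exactly and does not increase kinetic energy. It makes the $L^6$ and cubic terms superadditive, strictly for the $L^6$ term when both profiles are positive.
\item Check that the glued profile still lies in $B_{\rho_0}$.
\end{itemize}
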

\begin{proof}
We assume by contradiction that there exist $\varepsilon_0>0$, a sequence of times $\{t_n\}\subset \R^+$, and a sequence of initial data $\{{\bm \psi}_{0,n}\} \subset {\bf H}^1$ such that the unique (for $n$ fixed) solution ${\bm \psi}_{n}(t)$ to the problem \eqref{EqA1} with initial datum ${\bm \psi}_{n}(0)={\bm \psi}_{0,n}$ satisfies
\begin{equation*}
\text{dist}_{{\bf H}^{1}}\left({\bm \psi}_{0,n},\mathcal G\right) < \frac{1}{n} \ \ \text{and} \ \ \sup_{[0,T_n]} \text{dist}_{{\bf H}^{1}}\left({\bm \psi}_{n}(t),\mathcal G\right)\ge \varepsilon_0,
\end{equation*}
where $T_n$ is the maximal time of existence of the solution arising from the initial datum ${\bm \psi}_{0,n}$.
Without loss of generality, we assume ${\bm \psi}_{0,n}\in S(a_1,a_2)$. Then by the conservation laws \eqref{eq:energy} and \eqref{eq:cons-masses}, $\{{\bm \psi}_{n}(t)\} \subset {\bf H}^1$ satisfies
\[Q_1({\bm \psi}_{n}(t))=a^2_1 \quad\text{ and } \quad  Q_2({\bm \psi}_{n}(t))=a^2_2 
\]
for any $n$, and \[
E({\bm \psi}_{n}(t)) \to m^{+}(a_1,a_2)\]
for $n\to\infty$.
From Proposition \ref{pro1}, we get the uniform local well-posedness.
By Lemmas \ref{lem2.11} and \ref{lem2.4}, as $\max\{a_1,a_2\}<D$, we  get
\begin{equation*}
\begin{aligned}
m^+(a_1,a_2)&=\inf_{{\bf u}\in S(a_1,a_2)\cap B_{\rho_0}\cap \mathcal{M}}E({\bf u})\\
&=\inf\left\{E({\bf u}) \hbox{ s.t. }{\bf u}\in S(a_1,a_2)\cap B_{\max\{a_1,a_2\}D^{-1}\rho_0}\cap\mathcal{M}\right\}.
\end{aligned}
\end{equation*}
 Note that the condition $\max\{a_1,a_2\}<D$ ensures that $\mathcal G$ is not empty.
A similar analysis to that in the proof of \cite[Theorem 1.2]{KO}
yields strict sub-additivity of $E$ on
\[
V(a_1,a_2)=S(a_1,a_2)\cap B_{\rho_0}\cap \mathcal{M}.
\]
Moreover, combining $m^+(a_1,a_2)<0$ with $E({\bm \psi}_{n}(t))\to m^+(a_1,a_2)$, we have that ${\bm \psi}_{n}(t)\in \mathcal{M}$.
Let $t_n>0$ be the first time such that
\begin{equation}\label{orb-equality}
\text{dist}_{{\bf H}^{1}}\left({\bm \psi}_{n}(t_n),\mathcal{G}\right) = \varepsilon_0.
\end{equation}
Then by the conservation laws, $\{{\bm \psi}_{n}(t_n)\} \subset B_{\rho_0}$, $Q_1({\bm \psi}_{n}(t_n))=a^2_1$ and $Q_2({\bm \psi}_{n}(t_n))=a^2_2 $ for any $n$, and $E({\bm \psi}_{n}(t_n)) \to m^+(a_1,a_2)$ as $n\to\infty$. According to the proof of Lemma \ref{lem2.6}, there exists ${\bf u}_0\in \mathcal{G}$ such that ${\bm \psi}_{n}(t_n)\to {\bf u}_0$ in ${\bf H}^{1}$. This contradicts  \eqref{orb-equality}.
\end{proof}

We now move to the proof that the solution can be extended globally in time, i.e., we show that $T^{\max}_{{\bm \psi}}=\infty$. 
\begin{proposition}\label{Pro2}
Assume that $(p,r)$ is an admissible pair with $p\neq\infty$. Then, for every $\gamma>0$ there exist $\varepsilon=\varepsilon(\gamma)>0$ and $T=T(\gamma)>0$ such that
\begin{equation*}
\sup_{\left\{{\bm \psi}\in {\bf H}^{1}\ \emph{s.t.} \ \mathop{\emph{dist}}_{{\bf H}^{1}}({\bm \psi},\mathcal{G})<\varepsilon\right\}} \|e^{it\Delta}{\bm \psi}\|_{{\bf X}_{T}}<\gamma.
\end{equation*}
\end{proposition}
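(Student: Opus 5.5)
The plan is a compactness argument: first show that $\mathcal G$ is compact in ${\bf H}^1$ modulo translations, then combine this with Strichartz estimates and dominated convergence for a single fixed profile.

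\emph{Step 1: compactness of $\mathcal G$.} Every element of $\mathcal G$ has the form $(e^{i\theta_1}u_1,e^{i\theta_2}u_2,e^{i(\theta_1+\theta_2)}u_3)$ with ${\bf u}$ a minimizer of $m^+(a_1,a_2)$. Any sequence in $\mathcal G$ is a minimizing sequence for $m^+$ lying in $V(a_1,a_2)$. By the compactness established in the proof of Lemma \ref{lem2.6}, together with the relative compactness up to translations invoked in Lemma \ref{local-stability}, such a sequence has a subsequence converging in ${\bf H}^1$ after suitable translations $y_n$. The phases $\theta_j$ live on a torus, so they are compact as well. Hence $\mathcal G$ is compact modulo translations. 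If one instead uses the radial positive representatives, which avoids translations, $\mathcal G$ is compact outright.

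\emph{Step 2: a fixed profile gives small Strichartz norm on short intervals.} For a single ${\bf v}\in{\bf H}^1$, Strichartz estimates give $e^{it\Delta}{\bf v}\in X_{p,r,1}$ for every admissible pair, since $W^{1,r}$ norms are controlled through $\nabla$ commuting with $e^{it\Delta}$. Because $p<\infty$, the quantity $\int_0^T\|e^{it\Delta}{\bf v}\|^p_{W^{1,r}}dt$ tends to $0$ as $T\to0$, by dominated convergence (absolute continuity of the integral). Note that ${\bf X}_T$ is built from $(p_1,r_1)$ and $(p_2,r_2)$, both with finite $p$, so this applies to each term of the norm.

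\emph{Step 3: uniformity over a neighbourhood of $\mathcal G$.} Translation invariance gives $\|e^{it\Delta}{\bf v}(\cdot-y)\|_{{\bf X}_T}=\|e^{it\Delta}{\bf v}\|_{{\bf X}_T}$, so it suffices to work with a compact set $K$ of representatives. Strichartz yields the Lipschitz bound $\|e^{it\Delta}({\bf v}-{\bf w})\|_{{\bf X}_T}\le C\|{\bf v}-{\bf w}\|_{{\bf H}^1}$, uniformly in $T\le1$.

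Cover $K$ by finitely many balls of radius $\gamma/(4C)$ centred at ${\bf v}_1,\dots,{\bf v}_N$. By Step 2, choose $T$ so small that $\|e^{it\Delta}{\bf v}_k\|_{{\bf X}_T}<\gamma/4$ for every $k$. Set $\varepsilon=\gamma/(4C)$.

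If $\mathrm{dist}({\bm\psi},\mathcal G)<\varepsilon$, then ${\bm\psi}$ lies within $2\varepsilon$ of some ${\bf v}_k$, after translation. Therefore
\[
\|e^{it\Delta}{\bm\psi}\|_{{\bf X}_T}\le \frac{\gamma}{4}+2C\varepsilon<\gamma .
\]
The supremum inherits the strict bound provided we shrink the constants slightly, for instance by using $\gamma/8$ in place of $\gamma/4$.

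The main obstacle is Step 1: justifying that all of $\mathcal G$, and not merely one minimizing sequence, is compact modulo the symmetries. This rests on the concentration-compactness argument behind Lemma \ref{lem2.6} and the strict subadditivity cited in Lemma \ref{local-stability}, which together show that every minimizing sequence of $m^+$ on $V(a_1,a_2)$ is precompact up to translation.
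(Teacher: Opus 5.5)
Your proof is correct and follows essentially the same route as the paper. Both use the compactness of $\mathcal G$ modulo translations, the global Strichartz bound $\|e^{it\Delta}{\bf f}\|_{{\bf X}_T}\le C\|{\bf f}\|_{{\bf H}^1}$, and the absolute continuity of the time integral for a single fixed profile. The paper runs this as a contradiction argument with sequences ${\bm \psi}_n\in\mathcal G$ and $T_n\to0$. Your finite-net version reaches the same conclusion and also makes explicit the passage from $\mathcal G$ to its $\varepsilon$-neighbourhood, which the paper leaves implicit.

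One small caveat: $\mathcal G$ is translation invariant, so it is not compact outright. You also do not need radial representatives, since precompactness of sequences modulo translations already gives the finite net $\{{\bf v}_1,\dots,{\bf v}_N\}$ used in your Step 3.
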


\begin{proof}
We claim that, for every $\gamma>0$, there exists $T>0$ such that
\begin{equation}\label{eq:4.1bb}
\sup_{{\bm \psi}\in \mathcal{G}}\sum^{3}_{j=1}\|e^{it\Delta}\psi_j\|_{X_{p,r,T}}<\gamma.
\end{equation}
We assume by contradiction that there exist sequences ${\bm \psi}_n\subset \mathcal{G}$ and $T_n> 0$ such that $T_n\to 0$, $\gamma_0\in \R^+$ and
\begin{equation*}
\|e^{it\Delta}\psi_j\|_{X_{p,r,T}}\ge \gamma_0>0.
\end{equation*}
Since $\mathcal{G}$ is compact up to translation, up to a subsequence, there exist  $x_n\in \R^3$ and ${\bm \psi}\in H^1(\R^3,\mathbb{C}^3)$
\begin{equation*}
{\bm \psi}_n(\cdot-x_n)\to{\bm \psi}(\cdot) \ \ \text{in} \ {\bf H}^1.
\end{equation*}
By Strichartz's estimates, for every $\tilde{T}>0$, we see that, $j\in\{1,2,3\}$,
\begin{equation*}
\|e^{it\Delta}\psi_{j,n}(\cdot-x_n)\|_{X_{p,r,\tilde{T}}}\to \|e^{it\Delta}\psi_{j}(\cdot)\|_{X_{p,r,\tilde{T}}} \quad \text{as} \ n\to \infty,
\end{equation*}
and then
\begin{equation}\label{conta}
\lim_{n\to \infty}\|e^{it\Delta}\psi_{jn}(\cdot-x_n)\|_{X_{p,r,\tilde{T}}}=\lim_{n\to \infty}\|e^{it\Delta}\psi_{jn}(\cdot)\|_{X_{p,r,\tilde{T}}}=\|e^{it\Delta}\psi_{j}\|_{X_{p,r,\tilde{T}}}\geq \gamma_0.
\end{equation}
Note that, for every $\varphi\in H^1(\R^3)$, $\|e^{it\Delta}\varphi\|_{X_{p,r,T}}\leq C \|\varphi\|_{H^1}$ (see \cite[Proposition 3.4]{JJL}). By the Dominated Convergence Theorem, it follows that
\begin{equation*}
\int^{\tilde{T}}_0\|e^{it\Delta}\psi_{j}\|_{H^1}dt\to 0 \quad \text{as} \quad  \tilde{T}\to 0.
\end{equation*}
Then we have $\|e^{it\Delta}\psi_{j}\|_{X_{p,r,\tilde{T}}}\to 0$ as $\tilde{T}\to 0$. Therefore, we can choose $\tilde{T}>0$ such that \begin{equation*}
\|e^{it\Delta}\psi_{j}\|_{X_{p,r,\tilde{T}}}<\frac{\gamma_0}{2},
\end{equation*}
and this is a contradiction with respect to \eqref{conta}. Now, fix $T>0$ such that \eqref{eq:4.1bb} holds; we have
\begin{equation*}
\|e^{it\Delta}{\bm \psi}\|_{X_{p,r,T}}\leq\gamma, \quad \forall {\bm \psi}\in \mathcal{G}.
\end{equation*}
In addition, if we choose $(p,r)=(p_1,r_1)$ and $(p,r)=(p_2,r_2)$ as in \eqref{str-pairs}, we can get the estimates for the norm $\|e^{it\Delta}{\bm \psi}\|_{{\bf X}_T}$.
 \end{proof}

By Proposition \ref{pro1} and Proposition \ref{Pro2}, we have that there exists $\varepsilon_0>0$ and $T_0>0$ such that the Cauchy problem \eqref{EqA1} has a unique solution ${\bm \psi}$ on the time interval $[0,T_0]$ in the sense of Definition \ref{def1}, with  
${\bm \psi}$ satisfying $\text{dist}_{{\bf H}^1}\left({\bm \psi},\mathcal{G}\right)<\varepsilon_0$.

\begin{lemma}\label{global-T}
There exists a $\delta_0>0$ such that, provided the initial datum ${\bm \psi}_0\in {\bf H}^{1}$ satisfies $\emph{dist}_{{\bf H}^{1}}\left({\bm \psi}_0,\mathcal{G}\right)<\delta_0$, then the corresponding solution ${\bm \psi}(t)$ to \eqref{EqA1}
satisfies $T^{\max}_{{\bm \psi}}=\infty$.
\end{lemma}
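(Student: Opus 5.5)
The plan is to combine the uniform local existence of Proposition \ref{pro1} with the uniform smallness of free evolutions near $\mathcal G$ from Proposition \ref{Pro2}, and to use Lemma \ref{local-stability} to keep the solution near $\mathcal G$ at all times. First I fix the constants. Let $\gamma_0$ be the threshold of Proposition \ref{pro1}. Apply Proposition \ref{Pro2} with $\gamma=\gamma_0$ and the pairs $(p_1,r_1)$, $(p_2,r_2)$ of \eqref{str-pairs}; this gives $\varepsilon_1=\varepsilon(\gamma_0)>0$ and $T_0=T(\gamma_0)\in(0,1]$ with
\[
\|e^{it\Delta}{\bm \phi}\|_{{\bf X}_{T_0}}<\gamma_0 \quad\text{whenever}\quad \mathrm{dist}_{{\bf H}^1}({\bm \phi},\mathcal G)<\varepsilon_1 .
\]
Hence for every ${\bm\phi}$ in the $\varepsilon_1$-neighbourhood of $\mathcal G$, the Cauchy problem \eqref{EqA1} with datum ${\bm \phi}$ has an integral solution on $[0,T_0]$, and $T_0$ does not depend on ${\bm\phi}$.

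Next I choose $\delta_0$. Apply Lemma \ref{local-stability} with $\varepsilon=\varepsilon_1$. That lemma is stated for perturbations of a single ${\bf v}\in\mathcal G$, but the statement here uses $\mathrm{dist}(\cdot,\mathcal G)$. Since $\mathrm{dist}_{{\bf H}^1}({\bm\psi}_0,\mathcal G)<\delta$ means there is some ${\bf v}\in\mathcal G$ with $\|{\bm\psi}_0-{\bf v}\|_{{\bf H}^1}<\delta$, it suffices that $\delta$ can be chosen independently of ${\bf v}$. The contradiction proof of Lemma \ref{local-stability} already runs over sequences with $\mathrm{dist}({\bm\psi}_{0,n},\mathcal G)<1/n$, so it yields this uniform $\delta_0$. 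Therefore, if $\mathrm{dist}({\bm\psi}_0,\mathcal G)<\delta_0$, then
\[
\mathrm{dist}_{{\bf H}^1}({\bm\psi}(t),\mathcal G)<\varepsilon_1 \quad\text{for all } t\in[0,T^{\max}_{\bm\psi}).
\]

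Now I argue by contradiction. Suppose $T^{\max}_{\bm\psi}<\infty$ and pick $t_*\in(T^{\max}_{\bm\psi}-T_0/2,\,T^{\max}_{\bm\psi})$. Since ${\bm\psi}(t_*)$ lies within $\varepsilon_1$ of $\mathcal G$, Proposition \ref{pro1} applied with datum ${\bm\psi}(t_*)$, using time-translation invariance of \eqref{EqA1}, gives an integral solution on $[t_*,t_*+T_0]$. By the uniqueness part of Proposition \ref{pro1}, this solution coincides with ${\bm\psi}$ on their common interval. Gluing the two gives an integral solution on $[0,t_*+T_0]$, and $t_*+T_0>T^{\max}_{\bm\psi}$, contradicting maximality. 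So $T^{\max}_{\bm\psi}=\infty$.

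I expect the main obstacle to be the gluing step. Uniqueness in Proposition \ref{pro1} holds in the class of Definition \ref{def1}, so I must check that the concatenated function belongs to $C([0,t_*+T_0],{\bf H}^1)\cap{\bf X}_{t_*+T_0}$ and satisfies the Duhamel formula \eqref{eqq:4.1} from time $0$. The ${\bf X}$ norm is additive over subintervals, so membership is routine. For the Duhamel formula on the extended interval, I will use the semigroup property $e^{i(t-t_*)\Delta}e^{it_*\Delta}=e^{it\Delta}$ and split the Duhamel integral at $t_*$. I also need the maximal time $T^{\max}_{\bm\psi}$ to be defined consistently with this notion of integral solution; that is the standard blow-up-alternative bookkeeping, which I would take from \cite{JJL}. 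A secondary point is the uniformity of $\delta$ in Lemma \ref{local-stability}, handled as above.
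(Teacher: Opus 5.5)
Your proposal is correct and follows essentially the same argument as the paper. Propositions \ref{pro1} and \ref{Pro2} give a uniform local existence time $T_0$ on an $\varepsilon$-neighbourhood of $\mathcal G$, Lemma \ref{local-stability} traps the trajectory in that neighbourhood, and restarting the flow near $T^{\max}_{\bm\psi}$ contradicts maximality. You are simply more explicit than the paper about two points it leaves implicit: the uniformity of $\delta$ in Lemma \ref{local-stability}, which its contradiction proof does provide since it is phrased with $\mathrm{dist}_{{\bf H}^1}(\cdot,\mathcal G)$, and the Duhamel gluing at $t_*$.
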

\begin{proof}
Combining Proposition \ref{pro1} and Proposition \ref{Pro2}, we get that there exists $\delta_0>0$ and $T_0>0$ such that the Cauchy problem \eqref{EqA1} with associated initial datum ${\bm \psi}_0$ satisfying $\text{dist}_{{\bm H}^{1}}\left({\bm \psi}_0,\mathcal{G}\right)<\delta_0$, has a unique solution on the time interval $[0, T_0]$ in the sense of Definition \ref{def1}. Then Lemma \ref{local-stability} guarantees that the solution ${\bm \psi}(t)$ satisfies $\text{dist}_{{\bf H}^{1}}\left({\bm \psi},\mathcal{G}\right)<\varepsilon_0$ up to the maximum time of existence $T^{\max}_{{\bm \psi}}\ge T_0$.

We claim that $T^{\max}_{{\bm \psi}}=\infty$. If $T^{\max}_{{\bm \psi}}<\infty$, at any time in $(0, T^{\max}_{{\bm \psi}})$ we can get an uniform additional time of existence $T_0>0$, this contradicts the definition of $T^{\max}_{{\bm \psi}}$.
 \end{proof}
 
We can conclude with the validity of the first point in Theorem \ref{th1.2}.

\begin{proof}[Proof of Theorem \ref{th1.2} $(i)$]
The orbital stability of $\mathcal{G}$ follows from Lemmas \ref{local-stability} and \ref{global-T}.
\end{proof}

\subsection{Strong instability}
We prove now  the strong instability by blow-up, namely the second claim of Theorem \ref{th1.2}.
\begin{proof}[Proof of Theorem \ref{th1.2} $(ii)$.]
Let ${\bf v}$ be the excited state constructed in Theorem \ref{th1.1}, point \textup{(i)}. For any $s>0$, let ${\bf v}_s:=s\star{\bf v}$, and let $(\psi_{1,s},\psi_{2,s},\psi_{3,s})={\bm \psi}_s={\bm \psi}_s(t)$ be the solution to \eqref{EqA1} with the initial datum ${\bf v}_s$, defined on the interval $[0, T_{\max})$. Then, ${\bf v}_s\to {\bf v}$ as $s\to 1^+$. Moreover, it follows from \cite{BL1} that ${\bf v}\in H^1(\R^3,\R^3)$ decays exponentially at infinity, and hence $|x|{\bf v}\in L^2(\R^3,\R^3)$. It is sufficient to prove that ${\bm \psi}_s$ blows-up in finite time. Let $\sigma_{{\bf v}_s}$ be defined in Lemma \ref{lem2.3}, we have
\begin{equation*}
E({\bf v}_s)=E(s\star{\bf v})<E(\sigma_{{\bf v}_s}\star {\bf v}_s)=\inf_{\mathcal{P}^-_{a_1,a_2}}E,
\end{equation*}
because $P({\bf v}_s)<0$.

\vskip1mm

Next we infer that ${\bm \psi}_s$ blows-up in finite time. We first prove that there exists $\eta>0$ such that $P({\bm \psi}_s)\leq-\eta<0$ for any $t$ in the maximal time of existence.
Since $\sigma_{{\bf v}_s}$ is the unique global maximal point of $\Psi_{{\bf v}_s}$, the latter  is strictly decreasing and concave in $(\sigma_{{\bf v}_s},+\infty)$ (see \eqref{eq:fiber}  for the definition of $\Psi_{{\bf v}_s}$). From \cite[Section 6]{FLY},
we have the following claim: if ${\bf v}_s\in S(a_1,a_2)$ and $\sigma_{{\bf v}_s}\in (0,1)$, then
\begin{equation}\label{eq:xs5}
P({\bf v}_s)\leq E({\bf v}_s)-\inf\limits_{\mathcal{P}^-_{a_1,a_2}}E.
\end{equation}

\noindent By continuity, and $P({\bf v}_s)<0$, provided $t$ is sufficiently small we have $P({\bm \psi}_s)<0$. Therefore, from \eqref{eq:xs5},
\begin{equation}\label{P-upper-bound}
P({\bm \psi}_s)\leq E({\bm \psi}_s)-\inf\limits_{\mathcal{P}^-_{a_1,a_2}}E=E({\bf v}_s)-\inf\limits_{\mathcal{P}^-_{a_1,a_2}}E=:-\eta<0,
\end{equation}
for any $t$. Hence, we deduce by continuity that $P({\bm \psi}_s)<-\eta$ for all $t\in [0,T_{\max})$.

\vskip1mm

We introduce the virial functional 
\begin{equation}\label{eq:virdef}
I_{{\bm \psi}_s}(t):=\sum_{j=1}^3\int |x|^2|\psi_{j,s}(t)|^2  dx.
\end{equation}
By differentiating twice in time and by using \eqref{EqA1} and  \eqref{P-upper-bound}, we get 
\begin{equation}\label{eq:vir-2}
\begin{aligned}
I_{{\bm \psi}_s}^{\prime\prime}(t)&= 8 P({\bm \psi}_s(t))\leq-8\eta<0 \quad \text{for all}\quad t\in [0,T_{\max}),
\end{aligned}
\end{equation}
and using \eqref{eq:virdef} along with \eqref{eq:vir-2}, after integrating in time twice we obtain
\begin{equation*}
0\leq I_{{\bm \psi}_s}(t) \leq- 8\eta t^2+O(t) \quad \text{for all}\quad t\in [0,T_{\max}).
\end{equation*}
A convexity argument gives $T_{\max}<\infty$.
From the convexity of $I_{{\bm \psi}_s}(t)$, we derive the existence of $T_0>0$ such that 
\begin{equation}\label{eq:V1} 
\lim_{t\to T_0}\sum_{j=1}^3\int |x|^2|\psi_{j,s}(t)|^2  dx=0. \end{equation} 
Using the conservation of mixed masses \eqref{eq:cons-masses} and the Hardy inequality (see also \cite{BeFo}), we obtain
\begin{equation*}
\begin{aligned}
0<a^2_1&=Q_1({\bf v}_s)=\int |\psi_{1,s}(t)|^2+|\psi_{3,s}(t)|^2\\
&\leq2\left(\int |\nabla\psi_{1,s}(t)|^2\right)^{\frac{1}{2}}\left(\int |x|^2|\psi_{1,s}|^2\right)^{\frac{1}{2}}\\
&+2\left(\int |\nabla\psi_{3,s}(t)(t)|^2\right)^{\frac{1}{2}}\left(\int |x|^2|\psi_{3,s}(t)|^2\right)^{\frac{1}{2}},
\end{aligned}
\end{equation*}
and
\begin{equation}
\begin{aligned}
0<a^2_2&=Q_2({\bf v}_s)=\int |\psi_{2,s}(t)|^2+|\psi_{3,s}(t)|^2\\
&\leq2\left(\int |\nabla\psi_{2,s}(t)|^2\right)^{\frac{1}{2}}\left(\int |x|^2|\psi_{2,s}|^2\right)^{\frac{1}{2}}\\
&+2\left(\int |\nabla\psi_{3,s}(t)(t)|^2\right)^{\frac{1}{2}}\left(\int |x|^2|\psi_{3,s}(t)|^2\right)^{\frac{1}{2}}.
\end{aligned}
\end{equation}
Consequently, we deduce from \eqref{eq:V1} that there exists $j\in\{1,2,3\}$ such that
\[\lim_{t\to T_0}\int |\nabla \psi_{j,s}(t)|^2  dx=\infty\]
and ultimately
\begin{equation*}
\lim_{t\to T_0} \|{\bm\psi}_{s}(t)\|_{\dot{\bf H}^1}=\infty.
\end{equation*}
\end{proof}

\subsection{Absence of small data scattering}
We conclude the paper by proving the absence of small data scattering. 
\begin{proof}[Proof of Theorem \ref{th1.3}.] In order to show that small data scattering cannot hold under the assumption of Theorem \ref{th1.1} it is sufficient to prove (see also \cite{BeJe}) that our topological local minimizer ${\bf u}$ fulfills  $\displaystyle \lim_{(a_1,a_2)\to (0,0)}\|{\bf u}\|
_{\dot {\bf H}^1}=0$. By $P({\bf u})=0$, once more we write 
\begin{equation*}
E({\bf u})=\frac{1}{3}\|{\bf u}\|^2_{\dot{\bf H}^1}-\frac{3\alpha}{4} \mathrm{Re}\int u_1u_2\overline{u}_3<0
\end{equation*}
and by \eqref{eq:b7-prev} we get 
\begin{equation*}
\|{\bf u}\|_{\dot{\bf H}^1}\to 0 \quad \text{as} \quad  (a_1,a_2)\to (0^+,0^+).
\end{equation*}
 Hence, the small data scattering cannot hold.
\end{proof}

\begin{ackno}\rm
L.F. was partially supported by the  INdAM-GNAMPA Project  E53C25002010001.  X.L. was supported by the 
National Natural Science Foundation of China (Grant No. 12471103) and Anhui
Provincial Natural Science Foundation (No. 2308085MA05).   X.Y.  was supported by the National Natural Science Foundation of China (Grant No. 12401130),  the Postdoctoral Fellowship Program of CPSF (Grant No. GZC20240405) and the China Postdoctoral Science Foundation (Grant No.2024M760761).\end{ackno}

\end{document}